\newtheorem{Theorem}{Theorem}[section]
\newtheorem{Proposition}[Theorem]{Proposition} 
\newtheorem{Lemma}[Theorem]{Lemma}
\newtheorem{Corollary}[Theorem]{Corollary}
\newtheorem*{Claim}{Claim}
\theoremstyle{definition}
\newtheorem{Definition}[Theorem]{Definition}
\newtheorem{Conjecture}[Theorem]{Conjecture}
\newtheorem{Question}[Theorem]{Question}
\newcommand{\rcasys}{\mathrm{RCA}_0}
\newcommand{\wklsys}{\mathrm{WKL}_0}
\newcommand{\acasys}{\mathrm{ACA}_0}
\newcommand{\pa}{\mathrm{PA}}
\newcommand{\wklstat}{\mathrm{WKL}}
\newcommand{\wwklstat}{\mathrm{WWKL}}
\newcommand{\dnrstat}{\mathrm{DNR}}
\newcommand{\colstat}{\mathrm{COL}}
\DeclareMathOperator{\dom}{\mathrm{dom}}
\DeclareMathOperator{\ran}{\mathrm{ran}}
\DeclareMathOperator{\leaves}{\mathrm{leaves}}
\DeclareMathOperator{\depth}{\mathrm{depth}}
\newcommand{\andd}{\wedge}
\newcommand{\la}{\langle}
\newcommand{\ra}{\rangle}
\newcommand{\da}{\!\downarrow}
\newcommand{\ua}{\!\uparrow}
\newcommand{\imp}{\rightarrow}
\newcommand{\biimp}{\leftrightarrow}
\newcommand{\Nb}{\mathbb{N}}
\newcommand{\smf}{\smallfrown}
\newcommand{\leqT}{\leq_\mathrm{T}}
\newcommand{\bso}{\mathrm{B}\Sigma^0_1}
\newcommand{\iso}{\mathrm{I}\Sigma^0_1}
\newcommand{\bst}{\mathrm{B}\Sigma^0_2}
\newcommand{\ist}{\mathrm{I}\Sigma^0_2}
\newcommand{\pam}{\pa^{-}}
\newcommand{\MP}[1]{\mathcal{#1}}
\newcommand{\cls}[1]{\boldsymbol{#1}}
\title[Comparing the strength of $\dnrstat$ functions in the absence of $\ist$]{Comparing the strength of diagonally non-recursive functions in the absence of $\Sigma^0_2$ induction}
\author{Fran\c{c}ois G. Dorais}
\address{Department of Mathematics\\
Dartmouth College\\
Hanover NH 03755\\
USA}
\email{francois.g.dorais@dartmouth.edu}
\urladdr{http://math.dartmouth.edu/~dorais/}
\author{Jeffry L. Hirst}
\address{Department of Mathematical Sciences\\
Appalachian State University\\
Boone NC 28608\\
USA}
\email{jlh@math.appstate.edu}
\urladdr{http://mathsci2.appstate.edu/~jlh/}
\author{Paul Shafer}
\address{Department of Mathematics\\
Ghent University\\
Krijgslaan 281 S22\\
B-9000 Ghent\\
Belgium}
\email{paul.shafer@ugent.be}
\urladdr{http://cage.ugent.be/~pshafer/}
\thanks{Paul Shafer is an FWO Pegasus Long Postdoctoral Fellow, and he also acknowledges the support of the Fondation Sciences Math\'ematiques de Paris.
Jeffry Hirst was partially supported by grant (ID\#20800) from the John Templeton Foundation. The opinions expressed in this publication are those of the authors and do not necessarily reflect the views of the John Templeton Foundation.}
\date{\today}
\begin{document}
\begin{abstract}
We prove that the statement ``there is a $k$ such that for every $f$ there is a $k$-bounded diagonally non-recursive function relative to $f$'' does not imply weak K\"onig's lemma over $\rcasys + \bst$.  This answers a question posed by Simpson.  A recursion-theoretic consequence is that the classic fact that every $k$-bounded diagonally non-recursive function computes a $2$-bounded diagonally non-recursive function may fail in the absence of $\ist$.
\end{abstract}

\maketitle

\section{Introduction}
\begin{quote}
\itshape It is a truth universally acknowledged, that a single man in possession of a good $k$-bounded diagonally non-recursive function, must be in want of a $2$-bounded diagonally non-recursive function~\cite{Austen}.
\end{quote}

An enduring project in recursion theory is to determine the amount of induction necessary to prove its classic theorems, particularly those concerning the recursively enumerable sets.  Post's problem and the Friedberg-Muchnik theorem~\cites{Chong:1992uz,Mytilinaios:1989ba,Slaman:1989fc}, the Sacks splitting theorem~\cites{Mytilinaios:1989ba,Slaman:1989fc}, the Sacks density theorem~\cite{Groszek:1996kq}, the infinite injury method~\cites{Chong:1998tu, Chong:1997he, Chong:2001iv}, and even the transitivity of Turing reducibility~\cite{Groszek:1994wa} have all been investigated.  The non-standard methods developed in the course of these studies have been recently applied in \emph{reverse mathematics}, an analysis of the logical strengths of ordinary mathematical statements in the context of second-order arithmetic, and led to solutions of several important open problems in the field.  Remarkably, Chong, Slaman, and Yang proved that stable Ramsey's theorem for pairs is strictly weaker than Ramsey's theorem for pairs~\cite{Chong:2012vp} and that Ramsey's theorem for pairs does not imply induction for $\Sigma^0_2$ predicates~\cite{Chong:2013prep}.  Furthermore, non-standard techniques are necessarily employed in proofs of conservativity results over systems with limited induction, such as the $\Pi^1_1$-conservativities of the cohesive principle and the chain-antichain principle over $\rcasys$ plus bounding for $\Sigma^0_2$ predicates~\cite{Chong:2012kz}.  Similarly, Corduan, Groszek, and Mileti proved what may be described as a dual conservativity result:  an extension of $\rcasys$ by $\Pi^1_1$ axioms proves Ramsey's theorem for singletons on the complete binary tree if and only if the extension proves induction for $\Sigma^0_2$ predicates~\cite{Corduan:2010ig}.  It follows that $\rcasys$ plus bounding for $\Sigma^0_2$ predicates does not prove Ramsey's theorem for singletons on the complete binary tree, which answers a question from~\cite{Chubb:2009gr}.  For a comprehensive introduction to non-standard methods in recursion theory and reverse mathematics, we refer the reader to the recent survey by Chong, Li, and Yang~\cite{Chong:2013wx}.

Within this framework of reverse mathematics, we study the logical strengths of several statements asserting the existence of $k$-bounded diagonally non-recursive functions.  Theorem 5 of Jockusch's classic analysis of diagonally non-recursive functions~\cite{JockuschJr:1989vs} states that every $k$-bounded diagonally non-recursive function computes a $2$-bounded diagonally non-recursive function.  The proof, which Jockusch attributes to Friedberg, is not uniform, and Jockusch proves that this is necessarily the case:  Theorem 6 of~\cite{JockuschJr:1989vs} implies that if $k > 2$ then there is no uniform (i.e., Medvedev) reduction from the class of $k$-bounded diagonally non-recursive functions to the class of $2$-bounded diagonally non-recursive functions.  In a talk given at the 2001 Annual Meeting of the American Philosophical Association~\cite{SimpsonTalk}, Simpson asked if the reduction from $k$-bounded diagonally non-recursive functions to $2$-bounded diagonally non-recursive functions can be implemented $\rcasys$.  Specifically, he asked if the statement ``there is a $k$ such that for every $X$ there a $k$-bounded diagonally non-recursive function relative to $X$'' implies weak K\"onig's lemma over $\rcasys$.  Our main result is that although the statement in question indeed implies weak K\"onig's lemma over $\rcasys$ plus induction for $\Sigma^0_2$ predicates, it does not imply weak K\"onig's lemma over $\rcasys$ plus bounding for $\Sigma^0_2$ predicates.  Consequently, if induction for $\Sigma^0_2$ predicates fails, there may be $k$-bounded diagonally non-recursive functions (for some necessarily non-standard $k$) that do not compute $2$-bounded diagonally non-recursive functions.  This result expresses a sense in which induction for $\Sigma^0_2$ predicates is necessary to prove that every $k$-bounded diagonally non-recursive function computes a $2$-bounded diagonally non-recursive function. 

\section{Background}
We define the fragments of first-order and second-order arithmetic that we consider in this work.  The standard references are H\'ajek and Pudl\'ak's \emph{Metamathematics of First-Order Arithmetic}~\cite{Hajek:1998uh} for fragments of first-order arithmetic and Simpson's \emph{Subsystems of Second Order Arithmetic}~\cite{Simpson:2009vv} for fragments of second-order arithmetic in the context of reverse mathematics.  Reverse mathematics is a foundational program, introduced by Friedman in~\cite{Friedman:1975wy}, dedicated to characterizing the logical strengths of the classic theorems of mathematics when interpreted in second-order arithmetic.  It is thus a fundamentally proof-theoretic endeavor, although its techniques are primarily recursion-theoretic.  We encouragingly refer the interested reader to the introduction of Simpson's book for a hearty introduction to reverse mathematics and its metamathematical motivations.

We pause here to highlight one important notational convention.  As is common when writing about reverse mathematics, throughout this work we use the symbol `$\omega$' to refer to the standard natural numbers and the symbol `$\Nb$' to refer to the first-order part of whatever structure is (often implicitly) under consideration.

\subsection{Fragments of first-order arithmetic}
The basic axioms of Peano arithmetic, here denoted $\pam$, express that $\Nb$ is a discretely ordered commutative semi-ring with $1$. Peano arithmetic, denoted $\pa$, consists of $\pam$ plus the \emph{induction scheme}, which consists of the universal closures of all formulas of the form
\begin{align*}
[\varphi(0) \andd \forall n(\varphi(n) \imp \varphi(n+1))] \imp \forall n \varphi(n).
\end{align*}

Fragments of $\pa$ are obtained by limiting the quantifier complexity of the formulas $\varphi$ allowed in the induction scheme.  For each $n \in \omega$, the $\Sigma^0_n$ ($\Pi^0_n$) induction scheme is the restriction of the induction scheme to $\Sigma^0_n$ ($\Pi^0_n$) formulas $\varphi$, and $\mathrm{I}\Sigma^0_n$ ($\mathrm{I}\Pi^0_n$) denotes the fragment of $\pa$ consisting of $\pam$ plus the $\Sigma^0_n$ ($\Pi^0_n$) induction scheme.  We express induction for $\Delta^0_n$ predicates via the $\Delta^0_n$ induction scheme, which consists of the universal closures of all formulas of the form
\begin{align*}
\forall n (\varphi(n) \biimp \psi(n)) \imp ([\varphi(0) \andd \forall n(\varphi(n) \imp \varphi(n+1))] \imp \forall n \varphi(n)),
\end{align*}
where $\varphi$ is $\Sigma^0_n$ and $\psi$ is $\Pi^0_n$.  The fragment $\mathrm{I}\Delta^0_n$ is then $\pam$ plus the the $\Delta^0_n$ induction scheme.

We also consider fragments of $\pa$ obtained by adding so-called \emph{bounding schemes} (also called \emph{collection schemes}).  The $\Sigma^0_n$ ($\Pi^0_n$) \emph{bounding scheme} consists of the universal closures of all formulas of the form
\begin{align*}
\forall a[(\forall n < a)(\exists m)\varphi(n,m) \imp \exists b(\forall n < a)(\exists m < b)\varphi(n,m)]
\end{align*}
where $\varphi$ is $\Sigma^0_n$ ($\Pi^0_n$).  The fragment $\mathrm{B}\Sigma^0_n$ ($\mathrm{B}\Pi^0_n$) is then $\mathrm{I}\Sigma^0_0$ plus the $\Sigma^0_n$ ($\Pi^0_n$) bounding scheme.

The following theorem summarizes the relationships among these fragments.

\begin{Theorem}[see \cite{Hajek:1998uh}~Theorem 2.4, \cite{Hajek:1998uh}~Theorem 2.5, and~\cite{Slaman:2004bh}]\label{thm-InductionSummary}
Let $n \in \omega$. Over $\pam$:
\begin{itemize}
\item $\mathrm{I}\Sigma^0_n$ and $\mathrm{I}\Pi^0_n$ are equivalent.
\item $\mathrm{B}\Sigma^0_{n+1}$ and $\mathrm{B}\Pi^0_n$ are equivalent.
\item $\mathrm{I}\Sigma^0_{n+1}$ is strictly stronger than $\mathrm{B}\Sigma^0_{n+1}$, which is strictly stronger than $\mathrm{I}\Sigma^0_n$.
\item If $n \geq 2$, then $\mathrm{I}\Delta^0_n$ and $\mathrm{B}\Sigma^0_n$ are equivalent (the proof uses the totality of the exponential function, which is provable in $\iso$).
\end{itemize}
\end{Theorem}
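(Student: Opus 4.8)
The plan is to establish the four items essentially independently. The first two are short syntactic arguments that I would carry out directly. For the third and fourth I would follow the literature for the positive implications and simply quote the separations (and the hard direction of Slaman's theorem) from~\cite{Hajek:1998uh} and~\cite{Slaman:2004bh}, since those are genuinely model-theoretic and are where I expect all of the difficulty to lie.

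For item~1 the key move is that $\Sigma^0_n$-induction can be ``reflected.'' Given $\mathrm{I}\Sigma^0_n$ and a $\Pi^0_n$ formula $\varphi$ with $\varphi(0)$ and $\forall x(\varphi(x)\imp\varphi(x+1))$, suppose toward a contradiction that $\neg\varphi(a)$, and apply $\Sigma^0_n$-induction to $\sigma(x)\equiv\neg\varphi(a\mathbin{\dot-}x)$, which is $\Sigma^0_n$ with parameter $a$ since it is the negation of a substitution instance of $\varphi$. Then $\sigma(0)$ holds by hypothesis, $\sigma(x)\imp\sigma(x+1)$ is the contrapositive of an instance of the induction step for $\varphi$ (the case $a\mathbin{\dot-}x=0$ being trivial), so $\sigma(a)$ holds, i.e.\ $\neg\varphi(0)$, a contradiction; the converse is symmetric and $n=0$ is trivial. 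For item~2, $\mathrm{B}\Sigma^0_{n+1}\imp\mathrm{B}\Pi^0_n$ is immediate because a $\Pi^0_n$ formula is $\Sigma^0_{n+1}$ up to a vacuous quantifier. For the converse, write a $\Sigma^0_{n+1}$ instance $(\forall x<a)\exists y\,\varphi(x,y)$ with $\varphi(x,y)\equiv\exists z\,\theta(x,y,z)$ and $\theta\in\Pi^0_n$ as $(\forall x<a)\exists w\,\theta(x,(w)_0,(w)_1)$, apply $\Pi^0_n$-collection to the $\Pi^0_n$ matrix $\theta(x,(w)_0,(w)_1)$, and observe that a bound on $w$ gives a bound on $y$ since $(w)_0,(w)_1\le w$; this needs only the pairing facts available in $\mathrm{I}\Sigma^0_0$, which is contained in both theories.

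For item~3, the implications $\mathrm{I}\Sigma^0_{n+1}\imp\mathrm{B}\Sigma^0_{n+1}\imp\mathrm{I}\Sigma^0_n$ form the positive half of the Kirby--Paris hierarchy theorem, and I would follow the (rather delicate) bounded-quantifier arguments of~\cite{Hajek:1998uh}: the rough idea for the first is to collect witnesses along an initial segment $[0,a)$ by induction on its length, and for the second is to extract from $\mathrm{B}\Sigma^0_{n+1}$ a $\Sigma^0_n$-least-number principle and run an ordinary induction against the least counterexample. The strictness of these implications is purely model-theoretic and is the part I would only quote: via suitable cuts in nonstandard models of the weaker theory, following Kirby and Paris, one obtains models of $\mathrm{B}\Sigma^0_{n+1}$ in which $\mathrm{I}\Sigma^0_{n+1}$ fails, and models of $\mathrm{I}\Sigma^0_n$ in which $\mathrm{B}\Sigma^0_{n+1}$ fails.

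For item~4, the direction $\mathrm{B}\Sigma^0_n\imp\mathrm{I}\Delta^0_n$ I would prove directly: if $\varphi\in\Sigma^0_n$ and $\psi\in\Pi^0_n$ define the same set $S$, then on any initial segment $[0,a]$ one uses $\mathrm{B}\Sigma^0_n$ to collect, uniformly in $x\le a$, either a bounded $\Sigma^0_n$-witness for $\varphi(x)$ or a bounded $\Sigma^0_n$-witness for $\neg\psi(x)$; since exactly one exists, membership in $S$ on $[0,a]$ becomes a bounded search over a $\Pi^0_{n-1}$ matrix, hence (using the collection one level down, available since $\mathrm{B}\Sigma^0_n\imp\mathrm{I}\Sigma^0_{n-1}$ by item~3) is $\Delta^0_{n-1}$ there, so $\mathrm{I}\Pi^0_{n-1}$ (equivalently $\mathrm{I}\Sigma^0_{n-1}$, by item~1) delivers the induction. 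The converse $\mathrm{I}\Delta^0_n\imp\mathrm{B}\Sigma^0_n$ for $n\ge2$ is Slaman's theorem: a threatened failure of $\Sigma^0_n$-collection is converted into a $\Delta^0_n$-definable object whose existence $\Delta^0_n$-induction refutes, with the coding of the relevant finite objects relying on the totality of exponentiation, which is available because $\mathrm{I}\Delta^0_n$ proves $\iso$ once $n\ge2$. I would take the details of this last step from~\cite{Slaman:2004bh}.
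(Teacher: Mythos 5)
The paper states this result purely as cited background (H\'ajek--Pudl\'ak Theorems 2.4 and 2.5 together with Slaman's paper) and offers no proof of its own, so there is nothing internal to compare against. Your treatment is the correct standard one: the downward-induction argument for item~1, the pairing/collection argument for item~2, and the routine half of item~4 are all sound, and you correctly isolate and defer to the cited sources exactly the parts that carry the real content --- the Kirby--Paris separations in item~3 and Slaman's implication $\mathrm{I}\Delta^0_n \imp \mathrm{B}\Sigma^0_n$ --- which is precisely what the paper itself does by citation.
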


A \emph{cut} in a model $\Nb$ of $\pam$ is a set $I \subseteq \Nb$ such that $\forall n \forall m[(n \in I \andd m < n) \imp m \in I]$ and $\forall n(n \in I \imp n+1 \in I)$.  A cut $I \subseteq \Nb$ is called \emph{proper} if $I \neq \emptyset$ and $I \neq \Nb$.  Definable proper cuts witness failures of induction.  Suppose that $\Nb \models \pam$.  If the induction axiom for $\varphi$ fails in $\Nb$, then $\psi(n) = (\forall m < n)\varphi(m)$ defines a proper cut in $\Nb$, and if $\varphi$ defines a proper cut in $\Nb$, then the induction axiom for $\varphi$ fails in $\Nb$.

The following lemma, originally noticed by Friedman but by now part of the folklore, is key to many recursion-theoretic constructions in models with limited induction, including the main construction in this work.
\begin{Lemma}\label{lem-cut}
If $\Nb \models \bst + \neg\ist$, then there are a proper $\Sigma^0_2$ cut $I \subseteq \Nb$ and an increasing, cofinal function $c \colon I \imp \Nb$ whose graph is $\Delta^0_2$.
\end{Lemma}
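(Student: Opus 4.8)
The plan is to read the cut directly off a $\Sigma^0_2$ formula witnessing $\neg\ist$ and then to build $c$ from a ``least witness'' function for that formula. We use throughout that $\bst$ implies $\bso$, $\bpo$, and $\iso$ (Theorem~\ref{thm-InductionSummary}), together with the standard fact that over $\bso$ each of the classes $\Sigma^0_1$, $\Pi^0_1$, $\Sigma^0_2$, $\Pi^0_2$ is closed under bounded quantification.

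\textbf{Producing the cut.} First fix a $\Sigma^0_2$ formula $\theta$ witnessing $\neg\ist$, so that $\theta(0)$, $\forall x(\theta(x)\imp\theta(x+1))$, and $\exists x\,\neg\theta(x)$. Replacing $\theta(x)$ by $(\forall y\le x)\theta(y)$ (still $\Sigma^0_2$, by closure under bounded quantification), we may assume $\theta$ is downward closed. Then $I:=\{x:\theta(x)\}$ is a $\Sigma^0_2$ set that is closed downward and under successor, is nonempty since $\theta(0)$, and is not all of $\Nb$ since $\exists x\,\neg\theta(x)$; hence $I$ is a proper $\Sigma^0_2$ cut.

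\textbf{Defining $c$.} Write $\theta(x)\equiv\exists u\,\sigma_0(x,u)$ with $\sigma_0$ a $\Pi^0_1$ formula, and put $\sigma(x,u):=(\forall y\le x)(\exists u'\le u)\sigma_0(y,u')$. Then $\sigma$ is $\Pi^0_1$, is monotone in $u$, and is downward closed in $x$, and using $\bpo$ to collect witnesses together with the downward closure of $\theta$ one checks $\theta(x)\equiv\exists u\,\sigma(x,u)$. Define $c$ on $I$ by letting $c(x)$ be the least $u$ with $\sigma(x,u)$; such a $u$ exists by the $\Pi^0_1$ least-number principle, which is available since $\bst$ proves $\iso$. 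Its graph is $\{(x,u):\sigma(x,u)\andd(\forall u'<u)\neg\sigma(x,u')\}$, a conjunction of a $\Pi^0_1$ formula with a $\Sigma^0_1$ formula, hence $\Delta^0_2$; note also that $\sigma(x,u)$ implies $x\in I$, so this really does define a function on $I$. Monotonicity of $\sigma$ in $u$ and its downward closure in $x$ make $c$ non-decreasing, so $d(x):=c(x)+x$ is strictly increasing on $I$ and, since $d(x)=y$ iff $y\ge x\andd c(x)=y-x$, still has $\Delta^0_2$ graph.

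\textbf{Cofinality, the main point.} It remains to show $d$ (equivalently $c$) is cofinal in $\Nb$, and this is the step where the strength of the hypothesis genuinely enters. Suppose not, say $c(x)<b$ for all $x\in I$. For $x\in I$ we then get $\sigma(x,b)$ by monotonicity in $u$, while for $x\notin I$ we have $\neg\exists u\,\sigma(x,u)$ and hence $\neg\sigma(x,b)$; therefore $I=\{x:\sigma(x,b)\}$ is $\Pi^0_1$-definable with parameter $b$. But $I$ is a proper cut, which contradicts $\iso$ (equivalently $\mathrm{I}\Pi^0_1$), since a $\Pi^0_1$-definable proper cut witnesses a failure of $\Pi^0_1$ induction. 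So $d$ is cofinal, and $(I,d)$ is as required. The anticipated obstacle is precisely this last step — nothing in the construction so far forces the witness function to be unbounded — and the resolution is the observation that a bounded witness function would collapse the $\Sigma^0_2$ cut to a $\Pi^0_1$-definable one, which $\iso$ forbids.
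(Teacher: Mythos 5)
Your proof is correct and follows essentially the same route as the paper's: define $I$ as the downward closure of the $\Sigma^0_2$ formula witnessing $\neg\ist$, let $c$ be the least-witness function (using the $\Pi^0_1$ least-number principle from $\iso$), and derive cofinality from the fact that a uniform bound on the witnesses would make the proper cut $I$ $\Pi^0_1$-definable, contradicting $\mathrm{I}\Pi^0_1$. The only quibble is your blanket claim that $\bso$ suffices for closure of $\Sigma^0_2$ and $\Pi^0_2$ under bounded quantification — that step actually needs $\bst$ (equivalently $\bpo$), which you have and invoke elsewhere, so nothing is lost.
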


\begin{proof}
Let $\varphi(n)$ be a $\Sigma^0_2$ formula witnessing the failure of $\ist$.  That is, $\varphi(0) \andd \forall n (\varphi(n) \imp \varphi(n+1)) \andd \exists n \neg\varphi(n)$.  Let $I = \{n : (\forall m < n) \varphi(m)\}$.  $I$ is a proper cut, and using $\bst$ one proves that $I$ is $\Sigma^0_2$.  Let $\theta$ be $\Pi^0_1$ such that $I = \{n : \exists m \theta(n,m)\}$.  Define the function $c$ by $c(n) = \mu m \theta(n,m)$ and observe that the graph of $c$ is $\Delta^0_2$.  By $\iso$, if there is an $m$ such that $\theta(n,m)$, then there is a least such $m$.  Therefore $\dom(c) = I$.  Furthermore, $\ran(c)$ is unbounded, for if $\exists b(\forall n \in I)(c(n) < b)$, then $\forall n(n \in I \biimp (\exists m < b)\theta(n,m))$, which constitutes a violation of $\iso$.  If necessary, using $\bst$ we can dominate $c$ by an increasing function with the same domain whose graph is still $\Delta^0_2$.
\end{proof}

\subsection{Fragments of second-order arithmetic}

Full second-order arithmetic consists of $\pam$ plus the universal closures of the induction axiom
\begin{align*}
[0 \in X \andd \forall n(n \in X \imp n+1 \in X)] \imp \forall n (n \in X)
\end{align*}
and the comprehension scheme
\begin{align*}
\exists X \forall n(n \in X \biimp \varphi(n)),
\end{align*}
where $\varphi$ is any formula in the language of second-order arithmetic in which $X$ is not free.  In the setting of second-order arithmetic, formulas may have free second-order parameters, and `universal closure' means closure under both first-order and second-order universal quantifiers.

Fragments of second-order arithmetic are obtained by replacing the induction axiom by an induction scheme as in the first-order case and by limiting the comprehension scheme to formulas of a certain complexity.  We emphasize again that in the second-order setting a formula may have free second-order parameters that are universally quantified in the corresponding induction axiom, hence an induction axiom holding in some second-order structure means that it holds relative to every second-order object in that structure.  When studying reverse mathematics, we also produce fragments of second-order arithmetic by adding the statement of a well-known theorem to another fragment, as is the case in the system weak K\"onig's lemma described below.  This work is concerned with the first two of the Big Five fragments of second-order arithmetic, \emph{recursive comprehension axiom} ($\rcasys$) and \emph{weak K\"onig's lemma} ($\wklsys$), as well as various fragments defined by statements asserting the existence of diagonally non-recursive functions.

$\rcasys$ is the fragment consisting of $\pam$, the second-order $\Sigma^0_1$ induction scheme (which we still refer to as $\iso$ in this setting), and the $\Delta^0_1$ comprehension scheme, which consists of the universal closures of all formulas of the form
\begin{align*}
\forall n (\varphi(n) \biimp \psi(n)) \imp \exists X \forall n(n \in X \biimp \varphi(n)),
\end{align*}
where $\varphi$ is $\Sigma^0_1$, $\psi$ is $\Pi^0_1$, and $X$ is not free in $\varphi$.

The equivalences and implications of Theorem~\ref{thm-InductionSummary} hold over $\rcasys$ in the second-order setting.  Most relevant to our purposes are that
\begin{itemize}
\item for all $n \in \omega$, $\rcasys + \mathrm{I}\Sigma^0_n$ and $\rcasys + \mathrm{I}\Pi^0_n$ are equivalent (in particular, $\rcasys \vdash \mathrm{I}\Pi^0_1$);
\item $\rcasys + \ist$ is strictly stronger than $\rcasys + \bst$, which is strictly stronger than $\rcasys$; and
\item $\rcasys + \bst \vdash \mathrm{I}\Delta^0_2$ (in particular, models of $\rcasys + \bst$ have no $\Delta^0_2$-definable cuts). 
\end{itemize}

An important aid to working in $\rcasys$ is the fact that $\rcasys$ proves the bounded $\Sigma^0_1$ comprehension scheme (see \cite{Simpson:2009vv}~Theorem~II.3.9), which consists of the universal closures of all formulas of the form
\begin{align*}
\forall n \exists X \forall i[i \in X \biimp (i < n \andd \varphi(i))],
\end{align*}
where $\varphi$ is a $\Sigma^0_1$ formula in which $X$ is not free.  Contrastingly, adding the full $\Sigma^0_1$ comprehension scheme to $\rcasys$ is equivalent to adding comprehension for all arithmetical formulas and results in a stronger system denoted $\acasys$ (see \cite{Simpson:2009vv}~Theorem~III.1.3).

$\rcasys$ proves sufficient number-theoretic facts to implement the codings of sequences of numbers as numbers that are typical in recursion theory.  See \cite{Simpson:2009vv}~Section~II.2 for a carefully formalized development of such a coding.  Thus in $\rcasys$ we can interpret the existence of the set $\Nb^{<\Nb}$ of all finite sequences (also called strings) and, more generally, give the usual definition of a \emph{tree} as subset of $\Nb^{<\Nb}$ that is closed under initial segments.  We now fix our notation and terminology concerning strings and trees.  Let $k,s \in \Nb$, $\sigma, \tau \in \Nb^{<\Nb}$, $f \colon \Nb \imp \Nb$ be a function, and $T \subseteq \Nb^{<\Nb}$ be a tree.  Then
\begin{itemize}
\item $k^{<\Nb}$ is the set of strings over $\{0,1,\dots,k-1\}$, $k^s$ is the set of strings in $k^{<\Nb}$ of length exactly $s$, and $k^{<s}$ is the set of strings in $k^{<\Nb}$ of length less than $s$;
\item $|\sigma|$ is the length of $\sigma$;
\item $\sigma \subseteq \tau$ means that $\sigma$ is a substring of $\tau$;
\item $f \restriction n$ is the string $\la f(0), f(1), \dots, f(n-1) \ra$;
\item $\sigma \subseteq f$ means that $\sigma$ is an initial segment of $f$ (i.e., $\sigma = f \restriction |\sigma|)$;
\item $f$ is a \emph{path through $T$} if $\forall n(f \restriction n \in T)$.
\end{itemize}

Weak K\"onig's lemma ($\wklstat$) is the statement ``every infinite subtree of $2^{<\Nb}$ has an infinite path,''  and $\wklsys$ is the fragment $\rcasys + \wklstat$.  $\wklsys$ captures compactness arguments, and $\wklstat$ is equivalent to many classical theorems over $\rcasys$.  For example, the equivalence of $\wklstat$ with the Heine-Borel compactness of $[0,1]$, the extreme value theorem, G\"odel's completeness theorem, and Brouwer's fixed point theorem can all be found in~\cite{Simpson:2009vv}.

Suppressing the basic relations and functions, a structure in the language of second-order arithmetic is officially a pair $(\Nb, \MP S)$, where the first-order part $\Nb$ is some set and the second-order part $\MP S$ is a collection of subsets of $\Nb$.  However, via the simple coding of pairs possible in $\rcasys$ and the identification of a function $f \colon \Nb \imp \Nb$ with its graph $\{\la n, m \ra : f(n)=m\}$, one immediately sees that it is equivalent to consider structures in which the second-order part is a collection of functions $f \colon \Nb \imp \Nb$.  Thus we use the functional variant of second-order structures because it is the more natural setting for our study.

\subsection{Turing reducibility and Turing functionals}

The standard definition of Turing reducibility in $\rcasys$ is \cite{Simpson:2009vv}~Definition~VII.1.4, which essentially says that $Y \leqT X$ if $Y$ is both r.e.\ and co-r.e.\ in $X$.

\begin{Definition}[\cite{Simpson:2009vv}~Definition~VII.1.4]
Fix a universal lightface $\Pi^0_1$ formula $\pi(e,m,X)$ with exactly the displayed variables free.  For $X, Y \subseteq \Nb$, we say that \emph{$Y$ Turing reduces to $X$} ($Y \leqT X$) if there are $e_0, e_1 \in \Nb$ such that, for all $m$, $m \in Y \biimp \pi(e_0, m, X)$ and $m \notin Y \biimp \pi(e_1,m,X)$.
\end{Definition}

Note that in the preceding definition $m\in Y \biimp \neg  \pi(e_1,m,X)$, so $\neg \pi(e_1,m,X)$ is a $\Sigma^0_1$ formula essentially witnessing that $Y$ is r.e.~in $X$.  Extending this notion, we can formalize statements involving recursive functionals as used in \cite{Soare:1987vq}~Section~III.1.  For example, given $e \in \Nb$ we write $\Phi_e^f(n) = m$ to represent a formula asserting that there is a coded sequence of configurations of the $e$\textsuperscript{th} Turing machine that starts with the machine's initial configuration for input $n$, ends with the machine's output configuration for output $m$, and is such that each configuration in the sequence follows from the previous one by the rules of the machine when equipped with oracle $f$.  In this way we think of $\Phi_e^f$ as a partial $f$-recursive function as usual, and for two functions $f,g \colon \Nb \imp \Nb$, $g \leqT f$ if and only if there is an $e$ such that $g = \Phi_e^f$.  We make the familiar definitions that $\Phi_e^f(n)\da$ if there is an $m$ such that $\Phi_e^f(n)=m$ and that $\Phi_e^f(n)\ua$ otherwise.  Similarly, $\Phi_{e,s}^f(n)\da$ if $\Phi_e^f(n)\da$ within $s$ computational steps and $\Phi_{e,s}^f(n)\ua$ otherwise.  We follow the usual convention that the number of steps in a computation relative to a partial oracle is bounded by the first position where the oracle is undefined, such as with computations of the form $\Phi_e^\sigma(n)$ and $\Phi_e^{f \oplus \sigma}(n)$, where $\sigma$ is some finite string. 

The following notion will be useful to verify $\bst$ when constructing models.

\begin{Definition}
  We say that \emph{$Y$ is low relative to $X$} if $\Phi^Y_e(e)\da$ is equivalent to a $\Delta^0_2(X)$ statement.
\end{Definition}

\begin{Lemma}[\cite{Chong:2013wx} Proposition~4.14]\label{lem-LowBST}
If $Y$ is low relative to $X$ and $\bst$ holds relative to $X$ then $\bst$ also holds relative to $Y$.
\end{Lemma}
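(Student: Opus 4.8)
The plan is to exploit the lowness hypothesis to rewrite every $\Sigma^0_2(Y)$ formula as a $\Sigma^0_2(X)$ formula, uniformly in its free variables and provably in $\rcasys$. Once this is done, $\bst$ relative to $Y$ follows immediately from $\bst$ relative to $X$: a given instance $(\forall n < a)(\exists m)\,\varphi(n,m) \imp \exists b\,(\forall n < a)(\exists m < b)\,\varphi(n,m)$ of the former, with $\varphi$ a $\Sigma^0_2(Y)$ formula, is provably equivalent to the corresponding instance for the $\Sigma^0_2(X)$ formula equivalent to $\varphi$, and that instance holds by $\bst$ relative to $X$.

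To set up the rewriting, I would first recall the normal-form facts available for the formalization of Turing functionals over $\rcasys$ (see \cite{Simpson:2009vv}~Chapter~VII): by the enumeration theorem every $\Sigma^0_1(Y)$ formula $\eta(\bar x)$ is provably equivalent to a halting statement $\Phi^Y_e(\la\bar x\ra)\da$ for a fixed index $e$, and by the $s$-$m$-$n$ theorem together with padding it is provably equivalent to $\Phi^Y_{h(\bar x)}(h(\bar x))\da$ for a suitable primitive recursive $h$. Feeding this into the lowness hypothesis---which supplies a $\Delta^0_2(X)$ formula $\lambda(e)$ with $\forall e\,(\Phi^Y_e(e)\da \biimp \lambda(e))$---yields $\rcasys \vdash \forall \bar x\,(\eta(\bar x) \biimp \lambda(h(\bar x)))$, and $\lambda(h(\bar x))$ is again $\Delta^0_2(X)$. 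Thus every $\Sigma^0_1(Y)$ formula, and hence, by taking negations, every $\Pi^0_1(Y)$ formula, is provably equivalent to a $\Delta^0_2(X)$ formula, uniformly in the free number variables.

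Now let $\varphi(n,m) \equiv \exists\,m'\,\psi(n,m,m')$ be an arbitrary $\Sigma^0_2(Y)$ formula, with $\psi$ a $\Pi^0_1(Y)$ formula. I would replace $\psi$ by the $\Sigma^0_2(X)$ formula provably equivalent to it furnished by the previous step, and then collapse the two resulting leading existential quantifiers into one via the pairing function, all of which $\rcasys$ proves. This produces a $\Sigma^0_2(X)$ formula $\varphi^*(n,m)$ with $\rcasys \vdash \forall n\,\forall m\,(\varphi(n,m) \biimp \varphi^*(n,m))$, which is precisely what the first paragraph needs.

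I expect the only genuine difficulty here to be one of care rather than of ideas. One must check that the enumeration theorem, the $s$-$m$-$n$ theorem, and the closure of the relativized arithmetical hierarchy under bounded quantification and pairing are all available in the weak base theory (their usual proofs rest on the coding of finite sequences, hence ultimately on $\iso$, which $\rcasys$ provides), and, crucially, one must use the lowness hypothesis in its uniform form---as a single formula $\lambda(e)$ with $e$ free---so that substituting the term $h(\bar x)$ for $e$ produces a bona fide $\Delta^0_2(X)$ formula and the translation of the whole class $\Sigma^0_2(Y)$ into $\Sigma^0_2(X)$ is itself uniform. If the notion of $\bst$ relative to an oracle is taken to permit further second-order parameters, one simply relativizes the entire argument. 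With this bookkeeping in place, no induction beyond what $\rcasys$ supplies is needed for the translation, and $\bst$ relative to $X$ is invoked exactly once, at the very end.
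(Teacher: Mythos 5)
Your argument is correct: it is the standard proof that lowness lets one rewrite every $\Sigma^0_2(Y)$ formula as a $\Sigma^0_2(X)$ formula (via normal forms, $s$-$m$-$n$, and the uniform $\Delta^0_2(X)$ description of the jump), after which each instance of $\bst$ relative to $Y$ reduces to an instance relative to $X$; your one overstatement is that the equivalence $\eta(\bar x)\biimp\lambda(h(\bar x))$ is not \emph{provable} in $\rcasys$ but merely holds in the model at hand, which is all the argument needs. The paper itself gives no proof, citing \cite{Chong:2013wx}~Proposition~4.14, and your route is the same as the one used there.
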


\subsection{Diagonally non-recursive functions in the formal setting}

We now introduce the statements expressing the existence of diagonally non-recursive functions that are the main focus of this paper.

\begin{Definition}
Let $f$ and $g$ be functions $\Nb \imp \Nb$, and let $k \in \Nb$.
\begin{itemize}
\item The function $g$ is \emph{$k$-bounded} if $\ran(g) \subseteq \{0,1,\dots,k-1\}$.
\item The function $g$ is \emph{diagonally non-recursive relative to $f$} ($g$ is $\dnrstat(f)$ for short) if $\forall e(g(e) \neq \Phi_e^f(e))$.
\item The function $g$ is \emph{$k$-bounded diagonally non-recursive relative to $f$} ($g$ is $\dnrstat(k,f)$ for short) if $g$ is $k$-bounded and $\dnrstat(f)$.
\end{itemize}
\end{Definition}

In a slight overloading of notation we also let $\dnrstat(f)$ denote the formal statement ``there is a $g$ that is $\dnrstat(f)$'' and let $\dnrstat(k,f)$ denote the formal statement ``there is a $g$ that is $\dnrstat(k,f)$.''

It is well-known that $\wklstat$ and $\forall f \dnrstat(k,f)$ are equivalent over $\rcasys$ for every fixed $k \in \omega$ with $k \geq 2$.  $\wklstat$ and $\forall f \dnrstat(2,f)$ are equivalent by the classic work of Jockusch and Soare~\cite{JockuschJr:1972wma}, and $\forall f \dnrstat(2,f)$ and $\forall f \dnrstat(k,f)$ are equivalent because if $k \in \omega$ and $k \geq 2$, then the proof of \cite{JockuschJr:1989vs}~Theorem~5 can be unwound in $\rcasys$.  It is also well-known that $\forall f \dnrstat(f)$ is strictly weaker than $\wklstat$ over $\rcasys$.  In fact, $\forall f \dnrstat(f)$ is strictly weaker than $\wwklstat$~\cite{AmbosSpies:2004cv}, which is strictly weaker than $\wklstat$~\cite{Yu:1990gl}.  The purpose of this work is to analyze the strengths of the statements $\exists k \forall f \dnrstat(k,f)$ and $\forall f \exists k \dnrstat(k,f)$ over $\rcasys$.  With a little care, it is possible to implement the proof of \cite{JockuschJr:1989vs}~Theorem~5 in $\rcasys + \ist$.  Hence the statements $\wklstat$, $\exists k \forall f \dnrstat(k,f)$, and $\forall f \exists k \dnrstat(k,f)$ are all equivalent over $\rcasys + \ist$.

\begin{Theorem}\label{thm-DNR2inISigma2}
$\rcasys + \ist + \forall f \exists k \dnrstat(k,f) \vdash \forall f \dnrstat(2,f)$.
\end{Theorem}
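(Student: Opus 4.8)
The plan is to formalize the classical Friedberg argument that Jockusch attributes to Friedberg, taking care that each step can be carried out using only the induction available, and using $\ist$ precisely where a $\Sigma^0_2$ induction or bounding-over-an-unbounded-set step is needed. Fix $f$ and, working in $\rcasys + \ist$, suppose we are given $k$ and a function $g$ that is $\dnrstat(k,f)$; we want to produce a $\dnrstat(2,f)$ function. The idea is to recode $g$ by ``folding'' the interval $\{0,\dots,k-1\}$ down to $\{0,1\}$ in blocks: one thinks of a value in $k$ as being specified bit-by-bit, so that a single $k$-valued answer contributes several binary answers, and then one diagonalizes against the relevant $f$-computations by using $g$ to decide, at each stage, which of two finitely-branching possibilities to avoid. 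Concretely, one builds a binary-valued function $h$ such that $h$ on a block of length roughly $\lceil \log_2 k\rceil$ is computed from a single value $g(e')$ for an appropriate index $e'$ obtained by the recursion theorem / an $s$-$m$-$n$ construction, arranged so that if $h$ agreed with $\Phi^f_e(e)$ then $g(e')$ would equal a value it is forbidden to take.

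The key steps, in order, would be as follows. First I would set $d = \lceil \log_2 k\rceil$ (available in $\rcasys$) and describe the block decomposition of $\Nb$ into consecutive intervals $B_0, B_1, \dots$ each of length $d$, together with the bijection between $2^d$ and an initial segment of $k$. Second, using the parametrization of Turing functionals and the recursion theorem available in $\rcasys$, I would define, uniformly in $e$, an index $e'$ such that $\Phi^f_{e'}(e')$ simulates the putative behavior of the function being built on block $B_e$ and outputs the corresponding element of $k$; then I would set $h$ on block $B_e$ to be the binary expansion of $g(e')$, suitably truncated/adjusted so that it differs from the forbidden pattern. Third, I would verify $h$ is $2$-bounded and that $h$ is total and $f$-recursive-in-$g$, hence exists as a set by $\Delta^0_1$ comprehension relative to $g$. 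Fourth, I would verify the $\dnrstat(f)$ property: for each $e$, $h(e) \neq \Phi^f_e(e)$, by unwinding the definition of $e'$ and using that $g(e') \neq \Phi^f_{e'}(e')$.

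The step I expect to be the main obstacle — and the reason $\ist$ appears in the hypothesis — is that the clean statement $\exists k\,\forall f\,\dnrstat(k,f)$ is what one really wants, whereas we are only given $\forall f\,\exists k\,\dnrstat(k,f)$: for the fixed $f$ at hand there is some $k$ and some $\dnrstat(k,f)$ function $g$, but $k$ may be nonstandard, and the block length $d = \lceil \log_2 k\rceil$ and all the bookkeeping (lengths of blocks, the bijection $2^d \to k$, the totality of $h$ on every block, the induction that $h$ is defined on all of $\Nb$) must be carried out for this nonstandard $d$. The genuinely $\Sigma^0_2$ content is the claim that the recursion defining $h$ block-by-block, with each block's value extracted from a $g$-value at an index produced by the recursion theorem, terminates and yields a total function — this is an induction on a predicate of the form ``for all $e < n$, the computation of $e'$ and of $g(e')$ has settled,'' which is $\Sigma^0_2$ because ``$g(e')$'' is unproblematic but the $s$-$m$-$n$ bookkeeping and the clause ``$\Phi^f_e(e)\da$ with value matching'' inside the verification is $\Sigma^0_1$-over-$\Sigma^0_1$. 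One should also check that no unbounded search over all $e$ is needed at any single block, so that $\bst$ would suffice for the mechanics but $\ist$ is needed to push the recursion through all of $\Nb$. I would therefore isolate a single lemma stating that the block recursion produces a total function, prove it by $\Sigma^0_2$ induction, and then the remaining verifications go through in $\rcasys$ alone.
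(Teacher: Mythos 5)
There is a genuine gap at the heart of your argument, in the step where you claim the block coding can be ``arranged so that if $h$ agreed with $\Phi^f_e(e)$ then $g(e')$ would equal a value it is forbidden to take.'' If $h$ on the block $B_j$ is (a truncation of) the binary expansion of $g(e'_j)$, then the hypothesis that $g$ is $\dnrstat(k,f)$ only tells you that the $d$-tuple of bits of $g(e'_j)$ differs from the tuple $\la \Phi^f_{e}(e) : e \in B_j \ra$ in \emph{at least one} coordinate --- it does not tell you which coordinate, and it certainly does not give $h(e) \neq \Phi^f_e(e)$ for \emph{every} $e$ in the block. Conversely, a single agreement $h(e) = \Phi^f_e(e)$ at one position of the block does not force $g(e'_j)$ to equal the one forbidden value $\Phi^f_{e'_j}(e'_j)$. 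This is precisely the obstruction that makes the reduction non-uniform: Jockusch's Theorem~6 shows that no uniform reduction of the kind you describe exists for $k>2$ even classically, so no amount of care with the $s$-$m$-$n$ bookkeeping will repair a purely uniform block recoding. The missing idea is Friedberg's non-uniform trick, which the paper's proof formalizes: find the \emph{least} $i \leq k$ such that every length-$i$ prefix of indices can be extended to a full tuple on which the tail of the recoded function correctly predicts the corresponding diagonal values $\Phi^f_{m_j}(m_j)$; since $g$ is diagonally non-recursive, $i>0$, and a prefix $\vec n$ witnessing failure at $i-1$ pins down a single coordinate that is guaranteed to disagree, after which an unbounded search yields the $\dnrstat(2,f)$ function.

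Relatedly, you have misplaced the use of $\ist$. The block-by-block recursion defining $h$ from $g$ and the recursion theorem is primitive recursive in the oracles, and its totality needs no induction beyond what $\rcasys$ already provides. The genuinely $\Sigma^0_2$ content is the selection of the least $i$ above: the defining property of $i$ is $\Pi^0_2$ (``for all prefixes there exist extensions with convergent, matching computations''), and since $k$ may be nonstandard one cannot replace the internal least-element principle by an external finite case analysis. The $\Pi^0_2$ least element principle is exactly what $\ist$ supplies, and that is where the paper's proof invokes it.
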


\begin{proof}
Suppose $g$ is $\dnrstat(2^k,f)$, and think of $2^k$ as the set of strings over $\{0,1\}$ of length $k$.  Define a partial $f$-computable function $b$ by
\begin{align*}
b(n) =
\begin{cases}
0 & \text{if $\Phi_n^f(n) = 0$}\\
1 & \text{if $\Phi_n^f(n) > 0$}\\
\uparrow & \text{if $\Phi_n^f(n)\ua$},
\end{cases}
\end{align*}
let $\ell \colon \Nb^k \imp \Nb$ be a partial computable function such that
\begin{align*}
(\forall \vec n \in \Nb^k)(\forall x \in \Nb)(\Phi_{\ell(\vec n)}^f(x) = \la b(n_0), b(n_1), \dots, b(n_{k-1})\ra),
\end{align*}
and let $h \colon k \times \Nb^k \imp 2$ be the partial $g$-computable function defined by the equation
\begin{align*}
g(\ell(\vec n)) = \la h(0,\vec n), h(1,\vec n), \dots, h(k-1,\vec n) \ra.
\end{align*}

By the $\Pi^0_2$ least element principle, a consequence of $\rcasys + \ist$, let $i$ be least such that
\begin{align*}
(\forall \vec n \in \Nb^i)(\exists \vec m \in \Nb^{k-i})(\forall j < k-i)(h(i+j,\vec{n}^\smf\vec{m}) = \Phi_{m_j}^f(m_j)).
\end{align*}
Notice that $i > 0$, for otherwise we would have an $\vec m \in \Nb^k$ such that $(\forall j < k)(h(j,\vec m) = \Phi_{m_j}^f(m_j) = b(m_j))$, in which case $g(\ell(\vec m)) = \Phi_{\ell(\vec m)}^f(\ell(\vec m))$, contradicting that $g$ is $\dnrstat(2^k,f)$.
Fix $\vec n \in \Nb^{i-1}$ such that $(\forall \vec m \in \Nb^{k-i+1})(\exists j < k-i+1)(h(i+j,\vec{n}^\smf\vec{m}) \neq \Phi_{m_j}^f(m_j))$.  We can now describe a $\dnrstat(2,f)$ function that is $\leqT f \oplus g$.  Given $x \in \Nb$, search for an $\vec m \in \Nb^{k-i}$ such that $(\forall j < k-i)(h(i+j,\vec{n}^\smf x^\smf\vec{m}) = \Phi_{m_j}^f(m_j))$, then output $h(i-1,\vec{n}^\smf x^\smf\vec{m})$.  Such an $\vec m$ exists by choice of $i$, and $h(i-1,\vec{n}^\smf x^\smf\vec{m}) \neq \Phi_x^f(x)$ by choice of $\vec n$.
\end{proof}

In Section~\ref{sec-Main}, we show that $\rcasys + \bst$ does not suffice to prove the equivalences of $\wklstat$, $\exists k \forall f \dnrstat(k,f)$, and $\forall f \exists k \dnrstat(k,f)$.  Specifically, we prove
\begin{itemize}
\item Theorem~\ref{thm-noDNRK}: $\rcasys + \bst + \forall f \exists k \dnrstat(k,f) \nvdash \exists k \forall f \dnrstat(k,f)$, and

\item Theorem~\ref{thm-noWKL}: $\rcasys + \bst + \exists k \forall f \dnrstat(k,f) \nvdash \wklstat$.
\end{itemize}
Hence, over $\rcasys + \bst$, $\forall f \exists k \dnrstat(k,f)$ is strictly weaker than $\exists k \forall f \dnrstat(k,f)$, which is strictly weaker than $\wklstat$.  These results are, in a sense, as strong as possible.  It is of course natural to ask if there is a reversal of Theorem~\ref{thm-DNR2inISigma2}.  That is, it is natural to ask if $\rcasys \vdash (\exists k \forall f \dnrstat(k,f) \imp \wklstat) \imp \ist$.  However, this is readily seen not to be the case because $\wklsys \nvdash \ist$.  In fact, $\wklsys \nvdash \bst$.  No reversal over $\rcasys + \bst$ is possible either.  That is, $\rcasys + \bst \nvdash (\exists k \forall f \dnrstat(k,f) \imp \wklstat) \imp \ist$.  This is because $\wklsys + \bst \nvdash \ist$.  These comments all follow from the facts that $\wklsys$ is $\Pi^1_1$-conservative over $\rcasys$ (see~\cite{Simpson:2009vv}~Corollary~IX.2.6) and that $\wklsys + \bst$ is $\Pi^1_1$-conservative over $\rcasys + \bst$ (see~\cite{Hajek:1993vb} or adapt the proof of \cite{Simpson:2009vv}~Corollary IX.2.6).

\section{A little combinatorics of trees}

In this short section we isolate two facts concerning the combinatorics of finite trees.  These facts appear in~\cite{AmbosSpies:2004cv}, but we repeat them here for the sake of completeness and because it is important for our purposes to emphasize that the proofs are formalizable in the first-order fragment $\iso$ and hence in $\rcasys$.

\begin{Definition}[see \cite{AmbosSpies:2004cv}~Definition~2.3]{\ }
\begin{itemize}
\item The \emph{trunk} of a finite tree $T \subseteq \Nb^{<\Nb}$ is the longest $\sigma \in T$ such that every element of $T$ is comparable with $\sigma$.
\item A finite tree $T \subseteq \Nb^{<\Nb}$ with trunk $\sigma$ is \emph{$\geq\! n$-branching} if every $\tau \supseteq \sigma$ in $T$ that is not a leaf has at least $n$ immediate successors.
\end{itemize}
\end{Definition}

\begin{Lemma}[$\iso$; see \cite{AmbosSpies:2004cv}~Lemma~2.5]\label{lem-CoverWith2Trees}
Let $m \geq 1$, let $T \subseteq \Nb^{<\Nb}$ be a finite, $\geq\! 2m$-branching tree with trunk $\sigma$, and let $P_0$ and $P_1$ be finite trees such that $T \subseteq P_0 \cup P_1$.  Then there is a $\geq\! m$-branching tree $S \subseteq T$ with trunk $\sigma$ such that $\leaves(S) \subseteq \leaves(T)$ and either $S \subseteq P_0$ or $S \subseteq P_1$.
\end{Lemma}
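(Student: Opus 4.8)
The plan is to prune $T$ from the leaves downward, recording at each node which of $P_0$ and $P_1$ the cone above it can be forced into. Call a node $\tau \in T$ with $\sigma \subseteq \tau$ \emph{$i$-good} (for $i \in \{0,1\}$) if there is a $\geq\! m$-branching tree $S_\tau$ with trunk $\tau$ such that $S_\tau \subseteq T$, $\leaves(S_\tau) \subseteq \leaves(T)$, and every node of $S_\tau$ extending $\tau$ lies in $P_i$. The core of the proof is the claim that every $\tau \in T$ with $\sigma \subseteq \tau$ is $i$-good for some $i \in \{0,1\}$, which I would prove by induction on the height of $\tau$ inside the finite tree $T$ (equivalently, by downward induction on $|\tau|$).

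For the base case, if $\tau$ is a leaf of $T$ then $\tau \in P_i$ for some $i$ because $T \subseteq P_0 \cup P_1$, and $S_\tau = \{\rho : \rho \subseteq \tau\}$ witnesses $i$-goodness: it is vacuously $\geq\! m$-branching, has trunk $\tau$, has $\leaves(S_\tau) = \{\tau\} \subseteq \leaves(T)$, and has $\tau$ as its only node extending $\tau$. For the inductive step, let $\tau \supseteq \sigma$ be a non-leaf; since $T$ is $\geq\! 2m$-branching, $\tau$ has immediate successors $\tau_0, \dots, \tau_{r-1}$ in $T$ with $r \geq 2m$, each of which is $i_j$-good via some $S_{\tau_j}$ by the induction hypothesis. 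Applying the finite pigeonhole principle (available in $\iso$) to the $2$-coloring $j \mapsto i_j$, fix a color $i$ and a set $J$ of indices with $|J| \geq m$ and $i_j = i$ for all $j \in J$, and put $S_\tau = \bigcup_{j \in J} S_{\tau_j}$. One then verifies that $S_\tau$ is a subtree of $T$ with trunk exactly $\tau$; that it is $\geq\! m$-branching (the trunk $\tau$ has the $\geq m$ immediate successors $\{\tau_j : j \in J\}$, and above each $\tau_j$ the branching is inherited from $S_{\tau_j}$); that $\leaves(S_\tau) \subseteq \leaves(T)$, since a leaf of $S_\tau$ is a leaf of some $S_{\tau_j}$; and that every node of $S_\tau$ extending $\tau$ lies in $P_i$, since such a node either equals $\tau$---which lies in $P_i$ because $\tau_j \in P_i$ and $P_i$ is closed under initial segments---or extends some $\tau_j$ with $j \in J$. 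Keeping this bookkeeping straight, in particular confirming that the trunk of the union is $\tau$ rather than some longer string, is the fussiest point of the argument, and it is precisely where the $\geq\! 2m$-branching hypothesis on $T$ is used.

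To finish, apply the claim with $\tau = \sigma$ to obtain a color $i$ and a $\geq\! m$-branching tree $S := S_\sigma$ with trunk $\sigma$, all of whose nodes extending $\sigma$ lie in $P_i$. In particular $\sigma \in P_i$, and since $P_i$ is closed under initial segments the whole stem $\{\rho : \rho \subseteq \sigma\}$ lies in $P_i$, so $S \subseteq P_i$; together with $\leaves(S) \subseteq \leaves(T)$ this is the required tree. All the ingredients---coded finite trees, the pigeonhole count, and the bounded induction on heights, which is an instance of $\Sigma^0_1$ induction using only the totality of exponentiation---are available in $\iso$, so the entire argument formalizes there.
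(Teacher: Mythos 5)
Your proof is correct and is essentially the paper's argument: both proceed by induction on the tree (the paper inducts on $\depth(T,\sigma)$ and recursively applies the lemma to the subtrees above the successors of the trunk, while you run the same recursion bottom-up as a ``goodness'' predicate on nodes), with the identical key step of pigeonholing the $\geq 2m$ immediate successors into $m$ with a common color $i$ and taking the union of their witnessing subtrees. The bookkeeping points you flag (trunk of the union, closure of $P_i$ under initial segments) are handled the same way in the paper, so no further comment is needed.
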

\begin{proof}
For the purposes of this proof, define $\depth(T,\sigma) = \max\{|\tau|-|\sigma| : \tau \in T\}$ for a finite tree $T \subseteq \Nb^{<\Nb}$ with trunk $\sigma$.  We prove the lemma by induction on $\depth(T,\sigma)$.  If $\depth(T,\sigma) = 0$, then $T = \sigma$ (we identify $\sigma$ with $\{\tau : \tau \subseteq \sigma\}$ for simplicity).  Thus $T \subseteq P_0 \cup P_1$ implies that $\sigma \in P_i$ for some $i < 2$, which implies that $T \subseteq P_i$.  Now suppose that $\depth(T,\sigma) = n+1$.  Let $(\tau_j : j < 2m)$ be the first $2m$ immediate successors of $\sigma$ in $T$, and for each $j < 2m$, let $T_j = \{\tau \in T : \tau \supseteq \tau_j\}$.  For each $j < 2m$, $T_j$ is a $\geq\! 2m$-branching tree with trunk $\tau_j$, $\depth(T_j,\tau_j) \leq n$, and $T_j \subseteq P_0 \cup P_1$.  By induction, for each $j < 2m$ there are an $i_j < 2$ and a $\geq\! m$-branching subtree $S_j \subseteq T_j$ with trunk $\tau_j$ such that $\leaves(S_j) \subseteq \leaves(T_j)$ and $S_j \subseteq P_{i_j}$.  There is then an $i < 2$ such that $i_j = i$ for at least $m$ of the $i_j$.  Let $S = \bigcup \{S_j : j < 2m \andd i_j = i\}$.  Then $S$ is a desired $\geq\! m$-branching subtree of $T$ with trunk $\sigma$ such that $\leaves(S) \subseteq \leaves(T)$ and $S \subseteq P_i$.
\end{proof}

\begin{Lemma}[$\iso$; see \cite{AmbosSpies:2004cv}~Lemma~2.6]\label{lem-CoverWithNTrees}
Let $m, n \geq 1$, let $T$ be a finite, $\geq\! m2^{n-1}$-branching tree with trunk $\sigma$, and let $(P_i : i < n)$ be finite trees such that $T \subseteq \bigcup_{i<n}P_i$.  Then there are an $i < n$ and a $\geq\! m$-branching tree $S \subseteq T$ with trunk $\sigma$ such that $\leaves(S) \subseteq \leaves(T)$ and $S \subseteq P_i$. 
\end{Lemma}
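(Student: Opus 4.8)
The plan is to prove Lemma~\ref{lem-CoverWithNTrees} by induction on $n$, using Lemma~\ref{lem-CoverWith2Trees} as the engine for the inductive step. The base case $n = 1$ is immediate: if $T$ is $\geq\! m 2^{0}$-branching, i.e. $\geq\! m$-branching, and $T \subseteq P_0$, then we may take $S = T$, which is certainly $\geq\! m$-branching with trunk $\sigma$, has $\leaves(S) = \leaves(T)$, and is contained in $P_0$. Since this is a statement we want provable in $\iso$, we should be careful to phrase the induction as a $\Sigma^0_1$ (or $\Delta^0_1$) induction on $n$: the statement ``for every finite $\geq\! m 2^{n-1}$-branching tree $T$ with trunk $\sigma$ and every finite sequence $(P_i : i < n)$ covering $T$, there exist $i < n$ and a suitable $S$'' quantifies over coded finite objects and asserts the existence of a coded finite object, so it is $\Sigma^0_1$ in $n$ (with $m$ as a parameter), which is fine for $\iso$.

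For the inductive step, suppose the lemma holds for $n$, and let $T$ be a finite $\geq\! m 2^{n}$-branching tree with trunk $\sigma$ covered by $(P_i : i < n+1)$. Set $P_0' = P_0$ and $P_1' = \bigcup_{1 \le i < n+1} P_i$; then $T \subseteq P_0' \cup P_1'$, and $P_1'$ is a finite tree (a finite union of finite trees, hence closed under initial segments). Since $m 2^{n} = (m 2^{n-1}) \cdot 2$ and $m 2^{n-1} \ge 1$, $T$ is $\geq\! 2(m 2^{n-1})$-branching, so Lemma~\ref{lem-CoverWith2Trees} (applied with the parameter $m$ of that lemma set to $m 2^{n-1}$) yields a $\geq\! m 2^{n-1}$-branching tree $S' \subseteq T$ with trunk $\sigma$ such that $\leaves(S') \subseteq \leaves(T)$ and either $S' \subseteq P_0'$ or $S' \subseteq P_1'$. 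In the first case we are done with $i = 0$ and $S = S'$. In the second case, $S'$ is a finite $\geq\! m 2^{n-1}$-branching tree with trunk $\sigma$ covered by the $n$ trees $(P_{i} : 1 \le i < n+1)$, so the inductive hypothesis (after re-indexing these $n$ trees as $(Q_j : j < n)$ with $Q_j = P_{j+1}$) produces some $j < n$ and a $\geq\! m$-branching $S \subseteq S'$ with trunk $\sigma$, $\leaves(S) \subseteq \leaves(S')$, and $S \subseteq Q_j = P_{j+1}$. Since $S \subseteq S' \subseteq T$ and $\leaves(S) \subseteq \leaves(S') \subseteq \leaves(T)$, this $S$ together with $i = j+1$ is as required.

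**The main point requiring a little care is the formalization in $\iso$ rather than the combinatorics**, which is genuinely routine. One should check that ``$P_1' = \bigcup_{1 \le i < n+1} P_i$'' is obtained by $\rcasys$-style coding of a finite union of coded finite sets (bounded $\Sigma^0_1$ comprehension suffices, and in fact this is all primitive-recursive), that the re-indexing of the trees is unproblematic, and that the arithmetic identity $m 2^{n} = 2 \cdot (m 2^{n-1})$ together with $m 2^{n-1} \ge 1$ is available — this uses that exponentiation $2^{n-1}$ with a free variable exponent is total, which holds in $\iso$. With these observations the induction on $n$ goes through, completing the proof.
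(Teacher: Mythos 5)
Your proof is correct and follows essentially the same route as the paper's: induction on $n$, with Lemma~\ref{lem-CoverWith2Trees} applied to split off a single tree from the union of the remaining $n$ (the paper peels off $P_n$ rather than $P_0$, an immaterial reindexing). One small correction to your formalization remark: the statement being inducted on is $\Pi^0_1$ rather than $\Sigma^0_1$ --- the quantifiers over $T$ and $(P_i : i < n)$ are universal and unbounded, while the existential witness $S \subseteq T$ is bounded by (a primitive recursive function of) the code of $T$ --- but this is harmless since $\iso$ proves $\mathrm{I}\Pi^0_1$.
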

\begin{proof}
By induction on $n$.  The case $n=1$ is trivial.  Suppose that $T$ is a finite, $\geq\! m2^n$-branching tree with trunk $\sigma$ such that $T \subseteq \bigcup_{i<n+1}P_i$.  By Lemma~\ref{lem-CoverWith2Trees}, there is a $\geq\! m2^{n-1}$-branching tree $S \subseteq T$ with trunk $\sigma$ such that $\leaves(S) \subseteq \leaves(T)$ and either $S \subseteq \bigcup_{i<n}P_i$ or $S \subseteq P_n$.  If $S \subseteq P_n$ we are done.  If $S \subseteq \bigcup_{i<n}P_i$, then by induction there are an $i < n$ and a $\geq\! m$-branching tree $S_0 \subseteq S$ with trunk $\sigma$ such that $\leaves(S_0) \subseteq \leaves(S) \subseteq \leaves(T)$ and $S_0 \subseteq P_i$ as desired. 
\end{proof}

\section{Low $\dnrstat(k,f)$ functions that avoid $\dnrstat(b,h)$ functions}\label{sec-Main}

Consider a countable model $M = (\Nb, \MP S)$ of $\rcasys + \bst$ with a proper $\Sigma^0_2$ cut.  Let $f \in \MP S$, $n \in \omega$, $\vec h$ an $n$-tuple of elements of $\MP S$, and $\vec b$ an $n$-tuple of elements of $\Nb$ be such that $(\forall i < n)(h_i \leqT f)$ and $(\forall i < n)(\text{$f$ computes no $\dnrstat(b_i, h_i)$ function})$.  Our goal is to produce a function $g$ (outside of $\MP S$) that is $\dnrstat(k,f)$ for some $k \in \Nb$ but is such that that $f \oplus g$ computes no $\dnrstat(b_i, h_i)$ function for any $i < n$.

In $\rcasys$, define the function $K(b,s)$ by $K(b,0) = 2$ and $K(b,s+1) = K(b,s)2^{s^2+b+1}$.  Our main technical result is the following theorem.

\begin{Theorem}\label{thm-DNRKwithoutDNR2}
Let 
\begin{itemize}
\item $M = (\Nb, \MP S)$ be a countable model of $\rcasys + \bst$ with a proper $\Sigma^0_2$ cut $I$;
\item $n \in \omega$, $f \in \MP S$, $\vec h$ an $n$-tuple of elements of $\MP S$, and $\vec b$ an $n$-tuple of elements of $\Nb$ be such that
\begin{itemize}
\item $(\forall i < n)(h_i \leqT f)$ and
\item $(\forall i < n)(\text{$f$ computes no $\dnrstat(b_i, h_i)$ function})$;
\end{itemize}
\item $b_{\max} = \max \vec b$;
\item $k_0 \in \Nb$ be such that $(\forall i \in I)(k_0 > i)$;
\item $k = K(b_{\max},k_0)$.
\end{itemize}
Then there is a $\dnrstat(k,f)$ function $g$ such that $f \oplus g$ is low relative to $f$ and such that $f \oplus g$ computes no $\dnrstat(b_i, h_i)$ function for any $i < n$.
\end{Theorem}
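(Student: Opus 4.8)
The plan is to obtain $g$ by a forcing construction carried out inside $M$. Work with the $f$-computable tree $T^{f}_k$ consisting of all $\sigma \in k^{<\Nb}$ such that $\sigma(e) \neq \Phi^f_e(e)$ whenever $e < |\sigma|$ and $\Phi^f_e(e)\da$ within $|\sigma|$ steps; since at each level the $\dnrstat$ constraint forbids at most one of the $k$ values, every node of $T^f_k$ has at least $k-1$ immediate successors, so $T^f_k$ is as bushy as its bound allows, and the only role of the hypothesis $k = K(b_{\max},k_0)$ is to make this initial stock of branching enormous. A forcing condition is a finite subtree $T \subseteq T^f_k$ with a designated trunk, all of whose leaves lie at one common level, recorded together with a bushiness parameter $m$ for which $T$ is $\geq\! m$-branching above its trunk; conditions are ordered by bushy refinement plus trunk extension, and the generic $g$ is the path through all the trunks. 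The construction runs through stages indexed by the proper $\Sigma^0_2$ cut $I$, using the increasing cofinal map $c \colon I \imp \Nb$ with $\Delta^0_2$ graph from Lemma~\ref{lem-cut}: at stage $p \in I$ the trunk is pushed out to length $c(p)$ and the bushiness is maintained at $\geq\! K(b_{\max}, k_0 - p)$.

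Three families of requirements are met. That $g$ is $\dnrstat(k,f)$ is automatic, since every condition is a subtree of $T^f_k$; and $g$ is total because $c$ is cofinal, so the trunk lengths $c(p)$ exhaust $\Nb$. For lowness, at the stage handling index $e$ we check whether some bushy refinement of the current condition forces $\Phi^{f\oplus G}_e(e)\da$, pass to it if so and do nothing if not; since the whole construction is driven by finite combinatorics of $T^f_k$, by the finitely branching searches behind Lemma~\ref{lem-CoverWithNTrees}, and by the $\Delta^0_2$/$\Sigma^0_2$ data $c$ and $I$, it is $\Delta^0_2(f)$, so ``$\Phi^{f\oplus g}_e(e)\da$'' is decided $\Delta^0_2(f)$-uniformly and $f\oplus g$ is low relative to $f$. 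The delicate requirements are that for each $i < n$ and each index $e$, $\Phi^{f\oplus G}_e$ is not a $\dnrstat(b_i,h_i)$ function. Given a condition, one first tries to force $\Phi^{f\oplus G}_e$ partial or to take a value $\geq b_i$, both cheap in bushiness when possible; when neither is possible — so that $\Phi^{f\oplus \cdot}_e$ is bushily total and bushily $<\! b_i$ on every refinement — one applies Lemma~\ref{lem-CoverWithNTrees} repeatedly, with the $\leq b_i$ possible values as the pieces, to carve out a bushy subtree on which $\Phi^{f\oplus\cdot}_e$ is literally a fixed string $\vec v$, at a cost of a factor at most $2^{b_{\max}}$ per coordinate pinned down. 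If some coordinate has $v_j = \Phi^{h_i}_j(j)$, refining into that subtree satisfies the requirement forever; and the point is that this must occur, for a string avoiding $\Phi^{h_i}_\bullet(\bullet)$ everywhere and extendable without bound would, being extracted by an $f$-effective procedure from finite data, assemble into a total $f$-computable $\dnrstat(b_i,h_i)$ function, contradicting the hypothesis on $f$. Where the proof of Theorem~\ref{thm-DNR2inISigma2} used $\ist$ to locate a single good split point via the $\Pi^0_2$ least number principle, here no such point need exist and the cut must carry the load; making ``extendable without bound'' literal, so that the coordinates pinned down over the stages $p \in I$ are cofinal in $\Nb$ rather than merely in $I$, is where the cofinality of $c$ is used.

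The principal obstacle is the branching budget. One must choose the schedule — which requirements act at stage $p$, at what inspection depths, and on which coordinates — so that the total bushiness lost at stage $p$ never exceeds the factor $2^{(k_0-p-1)^2 + b_{\max}+1}$ by which $K(b_{\max},k_0-p)$ exceeds $K(b_{\max},k_0-p-1)$, while still ensuring that each $R_{i,e}$ is revisited enough either to be satisfied or to contribute its coordinate to the would-be total $f$-computable $\dnrstat(b_i,h_i)$ function. The quadratic exponent $s^2$ in the recursion for $K$ is the slack letting stage $p$ absorb the per-coordinate $2^{b_{\max}}$ losses for on the order of $c(p)$ coordinates across all $n$ pairs $(b_i,h_i)$, and the additive $b_{\max}+1$ absorbs the $k-1$-versus-$k$ gap in $T^f_k$ together with the small loss from re-bushifying the trunk after it is extended. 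Since every $p \in I$ satisfies $p < k_0$, the bushiness never drops to $1$, so the construction never breaks down; once the schedule is fixed, the verification of the three conclusions — $g$ is $\dnrstat(k,f)$, $f\oplus g$ is low over $f$, and $f\oplus g$ computes no $\dnrstat(b_i,h_i)$ function — is a matter of reading the corresponding requirements against the construction, and the lowness is what, via Lemma~\ref{lem-LowBST}, will let $\bst$ survive in the extension.
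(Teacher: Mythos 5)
Your overall architecture is the right one --- this is recognizably the Ambos-Spies et al.\ bushy-tree construction run along the $\Sigma^0_2$ cut, with $K$ as the bushiness budget, lowness by deciding the jump on bushy subtrees, and the non-$\dnrstat(b_i,h_i)$ requirements met by the dichotomy ``pin the value, hit $\Phi^{h_i}_\ell(\ell)$ or exceed $b_i$, or commit to divergence, else $f$ computes a DNR function.'' But there is a genuine gap in how you schedule and pay for the non-$\dnrstat(b_i,h_i)$ requirements. You propose to treat each index $e$ as a separate requirement $R_{i,e}$ and to absorb, at stage $p$, ``per-coordinate $2^{b_{\max}}$ losses for on the order of $c(p)$ coordinates'' inside the factor $2^{(k_0-p-1)^2+b_{\max}+1}$. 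This accounting does not close: $c$ is cofinal in $\Nb$, so there are stages $p \in I$ with $b_{\max}\,c(p)$ far larger than $(k_0-p-1)^2+b_{\max}+1 \leq k_0^2+b_{\max}+1$, and a loss of $2^{b_{\max} c(p)}$ blows through the entire budget $k = K(b_{\max},k_0)$ at a single stage. More fundamentally, there are $\Nb$-many indices $e$ to defeat but only cut-many stages, and without $\ist$ you cannot run a standard priority list over them. The missing idea is the condensation device of Lemma~\ref{lem-DNRblock}: a single primitive-recursively computed index $d(w_i,a,b_i)$ such that if \emph{any} $e<a$ computes a $\dnrstat(b_i,h_i)$ function from $f\oplus g$, then $\Phi^{f\oplus g}_{d(w_i,a,b_i)}$ is \emph{eventually} $\dnrstat(b_i,h_i)$. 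One then defeats only the $O(s)$ condensed indices $d(w_i,|\sigma_t|,b_i)$, $t\le s$, at stage $s$, so the divergence set $D$ grows by at most $s+1$ elements per stage, $|D_s|\le s^2+1$, and the per-stage cost $2^{|D_s|+b_{\max}}$ fits the recursion for $K$. Relatedly, your extraction of a contradiction (``a string avoiding $\Phi^{h_i}_\bullet(\bullet)$ everywhere \dots assembles into a total $f$-computable $\dnrstat(b_i,h_i)$ function'') only yields an \emph{eventually} DNR function, since values below the committed trunk cannot be controlled; this is exactly why the eventually-DNR notion has to be introduced and shown to suffice.

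Two smaller points. First, your tree $T^f_k$ has dead ends: a node $\sigma$ dies entirely if some $\Phi^f_e(e)$ with $e<|\sigma|$ first converges at step $|\sigma|+1$ with value $\sigma(e)$, so ``every node has at least $k-1$ immediate successors'' is false and your conditions cannot always be grown; the standard fix is to work in the full tree $k^{<\Nb}$ and keep a pair $\la e_0,e_0\ra$ in $D$ forcing divergence of the test ``$\exists e\,(g(e)=\Phi^f_e(e))$,'' as the paper does. Second, totality of $g$ does not follow merely from cofinality of $c$: the set $J$ of stages actually reached is only a $\Sigma^0_2(f)$ cut contained in $I$, and one must argue that if $g$ were partial then $J$ would be $\Delta^0_2(f)$-definable, contradicting the absence of $\Delta^0_2$ cuts under $\bst$.
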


The conclusion that $f \oplus g$ is low relative to $f$ in Theorem~\ref{thm-DNRKwithoutDNR2} ensures that $f \oplus g$ preserves $\bst$.

Before we continue with the proof of Theorem~\ref{thm-DNRKwithoutDNR2}, we point out that its simplest case provides an interesting example concerning recursion theory in models with limited induction.

\begin{Corollary}
If $\Nb$ satisfies $\bst$ but not $\ist$, then there is a $k \in \Nb$ and a $k$-bounded diagonally non-recursive function that computes no $2$-bounded diagonally non-recursive function.
\end{Corollary}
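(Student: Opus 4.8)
The plan is to obtain the corollary as a direct instantiation of Theorem~\ref{thm-DNRKwithoutDNR2}. First I would fix a countable model $M = (\Nb, \MP{S})$ of $\rcasys + \bst$ with $\Nb \models \neg\ist$ — if the ambient $\Nb$ is uncountable, this is arranged by passing to a countable elementary substructure. Since $\Nb \models \bst + \neg\ist$, Lemma~\ref{lem-cut} provides a proper $\Sigma^0_2$ cut $I \subseteq \Nb$. As $I$ is a nonempty cut it contains every standard number, and as it is proper it is bounded in $\Nb$; hence I can pick $k_0 \in \Nb \setminus I$, which satisfies $(\forall i \in I)(k_0 > i)$ and is necessarily nonstandard.

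Next I would apply Theorem~\ref{thm-DNRKwithoutDNR2} with $n = 1$, $f = \emptyset$, $h_0 = \emptyset$, $b_0 = 2$, so that $b_{\max} = 2$ and $k = K(2, k_0)$. Its hypotheses are immediate for this choice: $h_0 \leqT f$ holds trivially, and $\emptyset$ computes no $\dnrstat(2, \emptyset)$ function because $\rcasys$ proves that there is no recursive diagonally non-recursive function (if $g = \Phi_e^\emptyset$ were one, then $g(e) \neq \Phi_e^\emptyset(e) = g(e)$). The theorem then delivers a function $g$ that is $\dnrstat(k, \emptyset)$, i.e.\ a $k$-bounded diagonally non-recursive function, such that $\emptyset \oplus g$ is low relative to $\emptyset$ and computes no $\dnrstat(2, \emptyset)$ function — that is, $g$ computes no $2$-bounded diagonally non-recursive function. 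Since $K(2, k_0) \geq 2^{k_0}$ and $k_0$ is nonstandard, this $k$ is nonstandard as well, which is exactly what must happen: for standard $k$ the Friedberg argument of \cite{JockuschJr:1989vs}~Theorem~5 unwinds in $\rcasys$, so there every $k$-bounded diagonally non-recursive function already computes a $2$-bounded one.

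Finally, to exhibit $g$ inside an honest model of $\rcasys + \bst + \neg\ist$, I would use the lowness clause: since $\emptyset \oplus g$ is low relative to $\emptyset$, Lemma~\ref{lem-LowBST} shows that adjoining $g$ preserves $\bst$, so $\MP{S}$ extends to a countable $\MP{S'} \ni g$ with $(\Nb, \MP{S'}) \models \rcasys + \bst$, and $\neg\ist$ persists because the first-order part is unchanged and the same $\Sigma^0_2$ formula still defines the proper cut $I$. In $(\Nb, \MP{S'})$, the function $g$ witnesses the corollary. I do not expect a genuine obstacle here: Theorem~\ref{thm-DNRKwithoutDNR2} carries the entire load, and the only care needed is in checking its hypotheses for the specific parameters and in observing that the resulting bound $k$ is necessarily nonstandard.
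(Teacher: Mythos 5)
Your proposal is correct and matches the paper's (implicit) proof: the corollary is precisely the $n=1$, $f = h_0 = \emptyset$, $b_0 = 2$ instance of Theorem~\ref{thm-DNRKwithoutDNR2}, with the cut $I$ and the bound $k_0$ (hence $k = K(2,k_0)$, necessarily nonstandard) obtained from Lemma~\ref{lem-cut} exactly as you describe. The only slightly off note is the parenthetical about uncountable $\Nb$ --- passing to a countable elementary substructure would produce the function on the substructure rather than on $\Nb$ itself --- but this is immaterial here, since the paper works with countable models throughout and the explicit $\Delta^0_2(f)$ construction in Theorem~\ref{thm-DNRKwithoutDNR2} does not actually require countability for this instance.
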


In their proof of \cite{AmbosSpies:2004cv}~Theorem~2.1, Ambos-Spies et al.\ construct a diagonally non-recursive function $g \colon \omega \imp \omega$ (with necessarily unbounded range) that computes no $2$-bounded diagonally non-recursive function.  In fact, given a recursive $h$, they construct a diagonally non-recursive $g$ that computes no $h$-bounded diagonally non-recursive function.  Our proof of Theorem~\ref{thm-DNRKwithoutDNR2} is essentially the proof of \cite{AmbosSpies:2004cv}~Theorem~2.1 implemented inside of a $\Sigma^0_2$ cut as provided by Lemma~\ref{lem-cut}.  With this strategy, the Ambos-Spies et al.\ construction is completed in a bounded number of steps, thereby producing a diagonally non-recursive function $g$ with bounded range that does not compute a $2$-bounded diagonally non-recursive function.

We build a function $g$ satisfying the conclusion of Theorem~\ref{thm-DNRKwithoutDNR2} in a sequence of finite extensions.  Throughout the construction, we maintain a coded finite set $D$ of divergent computations according to the following definition.  Fix $k$ as in the statement of Theorem~\ref{thm-DNRKwithoutDNR2}.  Henceforth and through the proof of Theorem~\ref{thm-DNRKwithoutDNR2}, all strings are elements of $k^{<\Nb}$ and all trees are subtrees of $k^{<\Nb}$.

\begin{Definition}
Let $f \colon \Nb \imp \Nb$ be a function.
\begin{itemize}
\item A string \emph{$\sigma$ admits $\geq\! m$-branching $f$-convergence for $\la e, x \ra$} if there is a finite $\geq\! m$-branching tree $T$ with trunk $\sigma$ such that $(\forall \alpha \in \leaves(T))(\Phi_e^{f \oplus \alpha}(x)\da)$.

\item A string \emph{$\sigma$ forces $\geq\! m$-branching $f$-divergence for $\la e, x \ra$} if $\sigma$ does not admit $\geq\! m$-branching $f$-convergence for $\la e, x \ra$.

\item Let $D$ be a finite coded subset of $\Nb$.  A string $\sigma$ \emph{forces $\geq\! m$-branching $f$-divergence for $D$} if $\sigma$ forces $\geq\! m$-branching $f$-divergence for every $\la e, x \ra \in D$.
\end{itemize}
\end{Definition}

The following lemma is essentially Lemma~2.8 in~\cite{AmbosSpies:2004cv}.

\begin{Lemma}[$\rcasys$; see \cite{AmbosSpies:2004cv}~Lemma~2.8]\label{lem-PreserveDivergence}
Suppose $\sigma$ is a string and $D$ is a finite coded set such that $\sigma$ forces $\geq\! m$-branching $f$-divergence for $D$, where $m2^{|D|} < k$.  Then every $\geq\! m2^{|D|}$-branching tree with trunk $\sigma$ has a leaf that forces $\geq\! m$-branching $f$-divergence for $D$.
\end{Lemma}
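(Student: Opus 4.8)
The plan is to argue by contradiction, reducing to the combinatorial Lemma~\ref{lem-CoverWithNTrees} via a suitable choice of cover. If $|D| = 0$ the conclusion is vacuous, so assume $D = \{\la e_0, x_0 \ra, \dots, \la e_{d-1}, x_{d-1} \ra\}$ with $d = |D| \geq 1$. Fix a $\geq m 2^{|D|}$-branching tree $T$ with trunk $\sigma$, and suppose toward a contradiction that no leaf of $T$ forces $\geq m$-branching $f$-divergence for $D$; that is, every leaf of $T$ admits $\geq m$-branching $f$-convergence for at least one pair in $D$.

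First I would build a cover of $T$. For each $i < d$, let $A_i$ be the set of leaves $\tau$ of $T$ that admit $\geq m$-branching $f$-convergence for $\la e_i, x_i \ra$, and let $P_i$ be the downward closure of $A_i$ in $k^{<\Nb}$. Pairing the witnessing tree in the definition of ``admits convergence'' with a step bound makes ``$\tau \in A_i$'' a $\Sigma^0_1$ property of $\tau$, so each $A_i$ exists by bounded $\Sigma^0_1$ comprehension and each $P_i$ is a coded tree; this is the point at which the argument stays inside $\rcasys$. By the contradiction hypothesis every leaf of $T$ lies in some $A_i$, and since $T$ is finite every node of $T$ is an initial segment of a leaf of $T$, so $T \subseteq \bigcup_{i < d} P_i$. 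Since $T$ is $\geq m 2^{d}$-branching, hence $\geq m 2^{d-1}$-branching, Lemma~\ref{lem-CoverWithNTrees} (with $n = d$) yields an $i < d$ and a $\geq m$-branching tree $S \subseteq T$ with trunk $\sigma$ such that $\leaves(S) \subseteq \leaves(T)$ and $S \subseteq P_i$.

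Finally I would glue convergence witnesses onto $S$. Each leaf $\lambda$ of $S$ lies in $\leaves(T) \cap P_i$, so it is an initial segment of some leaf of $T$ in $A_i$; being itself a leaf of $T$, it equals that leaf, and hence $\lambda \in A_i$. Using $\bso$, which holds in $\rcasys$, to bound the witnesses uniformly, fix for each leaf $\lambda$ of $S$ a finite $\geq m$-branching tree $T_\lambda$ with trunk $\lambda$ all of whose leaves converge on $\la e_i, x_i \ra$, and set $U = S \cup \bigcup_{\lambda \in \leaves(S)} T_\lambda$. One checks that $U$ is a finite tree with trunk $\sigma$, that it is $\geq m$-branching because the only nodes of $S$ above $\sigma$ lacking $\geq m$ successors in $S$ are the leaves of $S$, which are exactly the trunks of the appended trees $T_\lambda$, and that every leaf of $U$ is a leaf of some $T_\lambda$. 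Then $\Phi_{e_i}^{f \oplus \alpha}(x_i)\da$ for every $\alpha \in \leaves(U)$, so $\sigma$ admits $\geq m$-branching $f$-convergence for $\la e_i, x_i \ra \in D$, contradicting the hypothesis that $\sigma$ forces $\geq m$-branching $f$-divergence for $D$. I expect the main obstacle to be the passage from a leaf $\lambda$ of $S$ to a convergence witness at $\sigma$: one must first know that $\lambda \in A_i$, which is exactly where the clause $\leaves(S) \subseteq \leaves(T)$ of Lemma~\ref{lem-CoverWithNTrees} is needed, and then check that gluing the trees $T_\lambda$ onto $S$ produces a genuinely $\geq m$-branching tree whose leaves are all leaves of the $T_\lambda$. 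The hypothesis $m 2^{|D|} < k$ serves only to ensure that all of the trees involved fit inside $k^{<\Nb}$, where the construction takes place.
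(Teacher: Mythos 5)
Your proposal is correct and follows essentially the same route as the paper's proof: cover $T$ by the trees $P_i$ of nodes extendible to leaves admitting convergence for the $i$-th pair, apply Lemma~\ref{lem-CoverWithNTrees} to extract a $\geq\! m$-branching $S\subseteq P_i$ with $\leaves(S)\subseteq\leaves(T)$, and graft the convergence-witness trees onto the leaves of $S$ to contradict that $\sigma$ forces divergence. The only cosmetic difference is that the paper first partitions the leaves via a least-index function $j$ before forming the $P_i$, whereas you allow the $A_i$ to overlap; both give $T\subseteq\bigcup_i P_i$, which is all the covering lemma needs.
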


\begin{proof}
Suppose $\sigma$ forces $\geq\! m$-branching $f$-divergence for $D$, suppose $T$ is a finite $\geq\! m2^{|D|}$-branching tree with trunk $\sigma$, and suppose for a contradiction that no leaf of $T$ forces $\geq\! m$-branching $f$-divergence for $D$.  Enumerate $D$ as  $D = \{\la e_i, x_i \ra : i < |D|\}$, and, using bounded $\Sigma^0_1$ comprehension, define a function $j \colon \leaves(T) \imp |D|$ by letting $j(\alpha)$ be least such that $\alpha$ admits $\geq\! m$-branching $f$-convergence for $\la e_{j(\alpha)}, x_{j(\alpha)} \ra$.  For each $i < |D|$, let $P_i$ be the tree consisting of the strings in $T$ extendible to an $\alpha \in \leaves(T)$ with $j(\alpha) = i$.  Then $T \subseteq \bigcup_{i<|D|}P_i$, so by Lemma~\ref{lem-CoverWithNTrees} there is a tree $T' \subseteq T$ that has trunk $\sigma$, is $\geq\! m$-branching, and is contained $P_i$ for some $i < |D|$.  For each $\alpha \in \leaves(T')$, let $T_\alpha$ be a $\geq\! m$-branching tree with trunk $\alpha$ such that $(\forall \beta \in \leaves(T_\alpha))(\Phi_{e_i}^{f \oplus \beta}(x_i)\da)$.  Then $T' \cup \bigcup_{\alpha \in \leaves(T')}T_\alpha$ witnesses that $\sigma$ admits $\geq\! m$-branching $f$-convergence for $\la e_i, x_i \ra$, a contradiction.
\end{proof}

The construction proceeds in stages.  In stages $s \equiv 0 \mod n+2$, we satisfy requirements ensuring that $g$ is total.  In stages $s \equiv i+1 \mod n+2$ for $i < n$, we satisfy blocks of requirements ensuring that $f \oplus g$ computes no $\dnrstat(b_i,h_i)$ function.  In stages $s \equiv n+1 \mod n+2$, we satisfy blocks of requirements ensuring that $f \oplus g$ is low relative to $f$.  In the end, $g$ satisfies $\ran(g) \subseteq k$ because we only consider extensions by strings $\sigma \in k^{<\Nb}$, and $g$ is diagonally non-recursive relative to $f$ because we ensure the divergence of $\Phi_{e_0}^{f \oplus g}(e_0)$, where $e_0$ is an index such that $(\Phi_{e_0}^{f \oplus g}(e_0)\da) \biimp \exists e(g(e) = \Phi_e^f(e))$.  To make the non-$\dnrstat(b_i,h_i)$ requirements more manageable, we condense a block of non-$\dnrstat(b_i,h_i)$ requirements into a single requirement.

\begin{Definition}
Let $b \in \Nb$, let $f \colon \Nb \imp b$, and let $h \colon \Nb \imp \Nb$.  We say that $f$ is \emph{eventually $\dnrstat(b,h)$} if $\exists n(\forall e > n)(f(e) \neq \Phi_e^h(e))$.
\end{Definition}

Define a primitive recursive function $d \colon \Nb^3 \imp \Nb$ such that, given functions $h \leqT f$, an index $e$ such that $\forall x(\Phi_e^f(x) = \Phi^h_x(x))$, and bounds $a$ and $b$, $d(e,a,b)$ is an index for a program such that, for every $\ell \in \Nb$ and function $g$, $\Phi_{d(e,a,b)}^{f \oplus g}(\ell)$ searches for a pair $\la i,s \ra$ such that $i<a$, $\Phi_{i,s}^{f \oplus g}(\ell) < b$, and $\neg(\exists \ell_0<\ell)(\Phi_{i,\ell}^{f \oplus g}(\ell_0) = \Phi_{\ell_0,\ell}^h(\ell_0))$.  If such a pair is found, then $\Phi_{d(e,a,b)}^{f \oplus g}(\ell) = \Phi_{i,s}^{f \oplus g}(\ell)$ for the first such pair.  Otherwise, $\Phi_{d(e,a,b)}^{f \oplus g}(\ell)\ua$.

\begin{Lemma}[$\rcasys$]\label{lem-DNRblock}
For any functions $f$, $g$, $h$ and $e, a, b \in \Nb$ as above, if there is an $i < a$ such that $\Phi_i^{f \oplus g}$ is $\dnrstat(b,h)$, then $\Phi_{d(e,a,b)}^{f \oplus g}$ is eventually $\dnrstat(b,h)$.
\end{Lemma}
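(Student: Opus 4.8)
The plan is to verify directly that $\Phi_{d(e,a,b)}^{f \oplus g}$ is a total function with range contained in $b$ and that its failures to diagonalize against $\Phi^h$ are confined to an initial segment of inputs. Fix $i < a$ with $\Phi_i^{f \oplus g}$ being $\dnrstat(b,h)$, so $\Phi_i^{f \oplus g}$ is total, has range in $b$, and satisfies $\Phi_i^{f \oplus g}(x) \neq \Phi_x^h(x)$ for every $x$ (recall that $d$ accesses $h$-computations through $\Phi_e^f$, so the program $\Phi_{d(e,a,b)}^{f\oplus g}$ really only queries $f\oplus g$). For each $\ell$, using the bounded comprehension available in $\rcasys$ — really just bounded quantification, since the computations in question either halt or not within $\ell$ steps — form the ``active set''
\[
A_\ell = \{\, j < a : \neg(\exists \ell_0 < \ell)(\Phi_{j,\ell}^{f\oplus g}(\ell_0) = \Phi_{\ell_0,\ell}^h(\ell_0)) \,\},
\]
so that, for $j < a$, ``$j \in A_\ell$'' is exactly the no-match clause the search in $\Phi_{d(e,a,b)}^{f\oplus g}$ tests for index $j$ at stage $\ell$. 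I would first record three routine facts: (a) the sequence $(A_\ell)_\ell$ is nonincreasing, since a witness $\ell_0 < \ell$ to $j \notin A_\ell$ still witnesses $j \notin A_{\ell'}$ for every $\ell' \geq \ell$ (the relevant computations converge within $\ell \leq \ell'$ steps and $\ell_0 < \ell'$); (b) $i \in A_\ell$ for every $\ell$, because $\Phi_i^{f\oplus g}$ is total and $\dnrstat(h)$, so $\Phi_i^{f\oplus g}(\ell_0) = \Phi_{\ell_0}^h(\ell_0)$ fails for every $\ell_0$; and (c) consequently $\Phi_{d(e,a,b)}^{f\oplus g}$ is a total function with range in $b$, since at every stage $\ell$ the pair $\la i, s\ra$ for $s$ large enough witnesses that the search succeeds, and any pair the search returns has output value below $b$.

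Next I would extract a stabilization stage. The function $\ell \mapsto |A_\ell|$ is $\Delta^0_1$-definable from $f\oplus g$ (using $h \leqT f$), takes values at most $a$, and is nonincreasing by (a); by the least number principle, a consequence of $\iso$ and hence available in $\rcasys$, it attains a least value at some $\ell^*$, and then $A_\ell = A_{\ell^*} =: A^*$ for all $\ell \geq \ell^*$, because a nonincreasing chain of finite sets of equal cardinality is constant. I would then check that every $j \in A^*$ is \emph{good}, meaning $\Phi_j^{f\oplus g}(\ell_0) \neq \Phi_{\ell_0}^h(\ell_0)$ for all $\ell_0$: a genuine match at some $\ell_0$, witnessed within $s$ steps, would force $j \notin A_\ell$ for every $\ell \geq \max(\ell^*, \ell_0 + 1, s)$, contradicting $j \in A^*$. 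Finally, for $\ell \geq \ell^*$, let $\la i'_\ell, s_\ell\ra$ be the pair returned by $\Phi_{d(e,a,b)}^{f\oplus g}$ on input $\ell$; its no-match clause says precisely $i'_\ell \in A_\ell = A^*$, so $i'_\ell$ is good, and therefore $\Phi_{d(e,a,b)}^{f\oplus g}(\ell) = \Phi_{i'_\ell, s_\ell}^{f\oplus g}(\ell) = \Phi_{i'_\ell}^{f\oplus g}(\ell) \neq \Phi_\ell^h(\ell)$. Taking $n = \ell^*$ then witnesses that $\Phi_{d(e,a,b)}^{f\oplus g}$ is eventually $\dnrstat(b,h)$.

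The only genuine subtlety, and the step I would be most careful with, is the induction bookkeeping inside $\rcasys$: one must confirm that each $A_\ell$ is an actual object of the model (it is — bounded and defined by a bounded-quantifier formula in $f\oplus g$ once $h$ is simulated via $e$), that $\ell \mapsto A_\ell$ is a genuine function, and that the stabilization of $(A_\ell)$ requires no induction beyond $\iso$. Everything else in the argument is a direct unwinding of the definition of $d$.
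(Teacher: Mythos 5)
Your proof is correct and follows essentially the same route as the paper's: both arguments isolate the finitely many indices $j<a$ that ever produce a match with $\Phi^h$ and exhibit a threshold past which the no-match clause in the search for $\Phi_{d(e,a,b)}^{f\oplus g}$ excludes all of them, so that any returned index never matches and the (total, $b$-valued) function is eventually $\dnrstat(b,h)$. The only difference is bookkeeping: the paper forms the set $X$ of ever-matching indices by bounded $\Sigma^0_1$ comprehension and bounds all witnesses by a single $N$ using $\bso$, whereas you obtain the threshold $\ell^*$ by stabilizing the nonincreasing sequence $|A_\ell|$ via the least number principle --- both devices are available in $\rcasys$, and your $A^*$ is exactly the complement in $\{0,\dots,a-1\}$ of the paper's $X$.
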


\begin{proof}
The fact that $\Phi_i^{f \oplus g}$ is $\dnrstat(b,h)$ for some $i<a$ ensures that $\Phi_{d(e,a,b)}^{f \oplus g}$ is total.  The finite set $X = \{j < a : \exists \ell,s (\Phi_{j,s}^{f \oplus g}(\ell) = \Phi_{\ell,s}^h(\ell))\}$ exists by bounded $\Sigma^0_1$ comprehension.  $\bso$ then provides a bound $N$ such that $(\forall j \in X)(\exists \ell,s < N)(\Phi_{j,s}^{f \oplus g}(\ell) = \Phi_{\ell,s}^h(\ell))$.  To show that $\Phi_{d(e,a,b)}^{f \oplus g}$ is eventually $\dnrstat(b,h)$, we show that $(\forall \ell > N)(\Phi_{d(e,a,b)}^{f \oplus g}(\ell) \neq \Phi_\ell^h(\ell))$.  Suppose for a contradiction that $\ell > N$ and that $\Phi_{d(e,a,b)}^{f \oplus g}(\ell) = \Phi_\ell^h(\ell)$.  By the definition of $d(e,a,b)$, there is a $j<a$ such that $\Phi_{d(e,a,b)}^{f \oplus g}(\ell) = \Phi_j^{f \oplus g}(\ell)$ and $\neg(\exists \ell_0<\ell)(\Phi_{j,\ell}^{f \oplus g}(\ell_0) = \Phi_{\ell_0,\ell}^h(\ell_0))$.  The equation $\Phi_j^{f \oplus g}(\ell) = \Phi_{d(e,a,b)}^{f \oplus g}(\ell) = \Phi_\ell^h(\ell)$ implies that $j \in X$ and hence that there are $\ell_0, s < N$ such that $\Phi_{j,s}^{f \oplus g}(\ell_0) = \Phi_{\ell_0,s}^h(\ell_0)$.  Now, $N < \ell$ and therefore $(\exists \ell_0 < \ell)(\Phi_{j,\ell}^{f \oplus g}(\ell_0) = \Phi_{\ell_0,\ell}^h(\ell_0))$, a contradiction.
\end{proof}

Lemma~\ref{lem-AvoidDNR2} and Lemma~\ref{lem-Low} below aid the construction of a function witnessing Theorem~\ref{thm-DNRKwithoutDNR2}.  Their proofs are straightforward if one assumes $\ist$.  The more complicated arguments below are necessary for our purposes because they are compatible with $\bst + \neg\ist$.

\begin{Lemma}[$\rcasys + \bst$]\label{lem-AvoidDNR2}
Let $f$ and $h$ be functions and let $b \in \Nb$ be such that $f$ computes no $\dnrstat(b,h)$ function.  Let $\sigma$ be a string, let $D$ be a finite coded set, and let $m \in \Nb$ be such that $\sigma$ forces $\geq\! m$-branching $f$-divergence for $D$, where $m2^{|D|+b} < k$.  Then for every finite coded set $E$ there are a string $\sigma' \supseteq \sigma$ and a finite coded set $D' \supseteq D$ such that
\begin{itemize}
\item[(i)] $|D'| \leq |D|+|E|$,
\item[(ii)] $\sigma'$ forces $\geq\! m2^{|D|+b}$-branching $f$-divergence for $D'$, and
\item[(iii)] for each $e \in E$, $(\exists \ell > |\sigma|)(\Phi_e^{f \oplus \sigma'}(\ell) \geq b)$, $(\exists \ell > |\sigma|)(\Phi_e^{f \oplus \sigma'}(\ell) = \Phi_\ell^h(\ell))$, or $\exists \ell(\la e, \ell \ra \in D')$.
\end{itemize}
\end{Lemma}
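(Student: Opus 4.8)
The plan is to process the elements of $E$ one at a time, arguing by induction on $|E|$. When $|E|=0$ we take $\sigma'=\sigma$ and $D'=D$; since $m\le m2^{|D|+b}$, forcing $\ge m$-branching $f$-divergence for $D$ entails forcing $\ge m2^{|D|+b}$-branching $f$-divergence for $D$, while (i) and (iii) are vacuous. For the inductive step we fix $e\in E$, first dispose of $e$ by passing to an intermediate pair $(\sigma_1,D_1)$ with $\sigma_1\supseteq\sigma$, $D_1\supseteq D$, $|D_1|\le|D|+1$, with $\sigma_1$ forcing a suitable branching $f$-divergence for $D_1$, and with clause (iii) already secured for $e$; we then invoke the induction hypothesis on $E\setminus\{e\}$ from $(\sigma_1,D_1)$. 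Clause (iii) for $e$ survives the later extensions because each of its alternatives --- $\Phi_e^{f\oplus\sigma_1}(\ell)\da$ with value $\ge b$ for some $\ell>|\sigma|$, or $\Phi_e^{f\oplus\sigma_1}(\ell)=\Phi_\ell^h(\ell)$ for some $\ell>|\sigma|$, or $\la e,\ell\ra\in D_1$ for some $\ell$ --- is preserved under extending $\sigma_1$ and enlarging $D_1$.

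To dispose of a single $e$, put $q=m2^{|D|+b}$ and split into two complementary cases. In \emph{Case A} there is an $\ell>|\sigma|$ such that $\sigma$ forces $\ge q$-branching $f$-divergence for $\la e,\ell\ra$; then we set $\sigma_1=\sigma$ and $D_1=D\cup\{\la e,\ell\ra\}$. Since $\sigma$ forces $\ge m$-branching $f$-divergence for $D$ and $q\ge m$, the string $\sigma$ forces $\ge q$-branching $f$-divergence for every member of $D_1$, and clause (iii) holds for $e$ because $\la e,\ell\ra\in D_1$. In \emph{Case B}, for every $\ell>|\sigma|$ the string $\sigma$ admits $\ge q$-branching $f$-convergence for $\la e,\ell\ra$; this is the substantive case.

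In Case B, for each $\ell>|\sigma|$ fix a finite $\ge q$-branching tree $T_\ell$ with trunk $\sigma$ all of whose leaves converge for $\la e,\ell\ra$, color a leaf $\alpha$ of $T_\ell$ by $\min\{\Phi_e^{f\oplus\alpha}(\ell),b\}\in\{0,1,\dots,b\}$, and apply Lemma~\ref{lem-CoverWithNTrees} with $n=b+1$ to extract a $\ge m2^{|D|}$-branching monochromatic subtree $S_\ell\subseteq T_\ell$ with trunk $\sigma$, of some color $c_\ell$. If $c_\ell<b$ and $c_\ell\ne\Phi_\ell^h(\ell)$ held for every $\ell>|\sigma|$, then $\ell\mapsto c_\ell$ would be a total $f$-recursive function --- the dovetailed search for $T_\ell$ terminates exactly because we are in Case B, and the passage from $T_\ell$ to $S_\ell$ and $c_\ell$ follows the effective proof of Lemma~\ref{lem-CoverWithNTrees} --- that is $b$-bounded and differs from $\Phi_\ell^h(\ell)$ at all $\ell>|\sigma|$; since $b\ge 2$, correcting it on the finitely many arguments $\le|\sigma|$ (choosing, by bounded $\Sigma^0_1$ comprehension, a value $<b$ distinct from $\Phi_\ell^h(\ell)$ for each such $\ell$) would yield a genuine $f$-recursive $\dnrstat(b,h)$ function, contradicting the hypothesis on $f$. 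Hence there is an $\ell_0>|\sigma|$ with $c_{\ell_0}=b$, or with $c_{\ell_0}<b$ and $c_{\ell_0}=\Phi_{\ell_0}^h(\ell_0)$; every leaf of $S_{\ell_0}$ then witnesses clause (iii) for $e$. Finally, $S_{\ell_0}$ is a $\ge m2^{|D|}$-branching tree with trunk $\sigma$ and $m2^{|D|}<k$, so Lemma~\ref{lem-PreserveDivergence} provides a leaf $\sigma_1$ of $S_{\ell_0}$ forcing $\ge m$-branching $f$-divergence for $D$; we set $D_1=D$.

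The step I expect to be the main obstacle is the accounting for branching numbers, which governs both the choice of $q$ and the re-entry into the induction. Taking $q=m2^{|D|+b}$ --- exactly the bound appearing in clause (ii) --- is forced: it must be large enough that the covering lemma (which costs a factor $2^b$, there being $b+1$ colors) still returns a $\ge m2^{|D|}$-branching subtree, and that is precisely wide enough for Lemma~\ref{lem-PreserveDivergence} (which costs a further factor $2^{|D|}$ and needs the branching bound below $k$) to return a leaf still forcing $\ge m$-branching $f$-divergence for $D$; the hypothesis $m2^{|D|+b}<k$ is the budget that legitimizes both. In Case B the induction re-enters at $(\sigma_1,D,m)$, which again satisfies the hypothesis, so there is no loss; the delicate point is Case A, where $|D|$ grows and the operative divergence level rises to $m2^{|D|+b}$, so that one must check that the Case-A elements of $E$, taken together, enlarge $D$ by at most $|E|$ and keep the divergence for the final $D'$ forced at level $m2^{|D|+b}$ as clause (ii) requires, and that the Case-B extensions of $\sigma$ can be interleaved with these additions without destroying forced divergence. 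Throughout, the argument is formalized in $\rcasys+\bst$: $\bst$ (together with $\bso$, which $\rcasys$ already proves) is what permits the uniform case analysis and the collection into $(\sigma',D')$ of the finitely many extensions and new pairs produced while processing $E$.
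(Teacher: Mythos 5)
Your treatment of a single $e$ --- the Case A / Case B dichotomy, the $(b+1)$-coloring of leaves by $\min\{\Phi_e^{f\oplus\alpha}(\ell),b\}$, the extraction of a monochromatic $\geq m2^{|D|}$-branching subtree via Lemma~\ref{lem-CoverWithNTrees}, and the contradiction with ``$f$ computes no $\dnrstat(b,h)$ function'' when no good $\ell_0$ exists --- is essentially sound and mirrors the dichotomy at the heart of the paper's proof. The gap is in how you compose these single-$e$ steps. Your outer induction is on $|E|$, and the statement being inducted on, ``there exist $\sigma'\supseteq\sigma$ and $D'\supseteq D$ satisfying (i)--(iii),'' is (at least) $\Sigma^0_2$: clause (ii) is $\Pi^0_1$, clause (iii) is $\Sigma^0_1$, and the existential quantifier over $\la\sigma',D'\ra$ then puts the whole statement at $\Sigma^0_2$. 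In the intended application $E$ has nonstandard cardinality (e.g.\ $E=\{t:t\leq|\sigma_s|\}$ in the main construction), so this is a genuine instance of $\ist$, which is exactly what is unavailable. Your closing appeal to $\bst$ does not repair this: collection bounds witnesses for $\Sigma^0_2$ facts already established, but it does not prove by induction that a length-$|E|$ construction can be carried to completion. This is precisely why the paper replaces the stepwise argument by a single global one: it builds one infinite $(f\oplus h)$-recursive tree whose stages handle all $e\in E$ simultaneously, invokes $\wklstat$ (legitimate by the $\Pi^1_1$-conservativity of $\wklsys+\bst$ over $\rcasys+\bst$), and locates $\sigma'$ on the resulting path by maximizing the finite quantity $|M(\sigma')|\leq|E|$ via bounded $\Sigma^0_1$ comprehension.

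Second, even granting the induction, Case A breaks the branching-level bookkeeping, as you yourself suspect. The pair $\la e,\ell\ra$ you add is forced only at level $q=m2^{|D|+b}$ relative to the current $\sigma$. Re-entering the induction with $(\sigma,D_1,m_1=q)$ requires $q\cdot2^{|D_1|+b}<k$, which does not follow from $m2^{|D|+b}<k$; and since forcing divergence at a higher branching level is the \emph{weaker} statement, what the inner call would return (forcing at level $q2^{|D_1|+b}$) does not yield clause (ii) at level $m2^{|D|+b}$. Worse, subsequent Case-B extensions of $\sigma$ give no way to recover forcing for the Case-A pairs at level $q$ relative to the final $\sigma'$: forced divergence is certified relative to a specific trunk, and your invariant only re-establishes it for $D$ at level $m$. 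The paper sidesteps this by choosing the witnesses last: after $\sigma'$ is fixed it defines $\ell_{e,t}$ relative to $T_t(\sigma')$, proves each stabilizes (this is where your Case-B argument lives), and applies $\bst$ once to obtain a single stage $t_0$ past which all of them have stabilized, so that every $\la e,\ell_e\ra$ is forced at level $m2^{|D|+b}$ relative to the final $\sigma'$ directly.
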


\begin{proof}
We prove the lemma in $\wklsys + \bst$, which suffices because $\wklsys + \bst$ is $\Pi^1_1$-conservative over $\rcasys + \bst$ (see~\cite{Hajek:1993vb} or adapt the proof of \cite{Simpson:2009vv}~Corollary IX.2.6).  We thus construct an infinite tree $T$ with trunk $\sigma$ such that every infinite path through $T$ has an initial segment $\sigma'$ and a corresponding set $D'$ that satisfy the conclusion of the lemma.

The tree $T$ grows in stages $(T_s : s \in \Nb)$.  In order to describe the growth of $T$, we represent $T_s$ as $T_s = \bigcup_{\tau \in R_s} T_s(\tau)$, where $R_s \subseteq k^{<\Nb}$ is finite and, for each $\tau \in R_s$, $T_s(\tau)$ is the tree $\tau^\smf k^{<t}$ for some $t \in \Nb$.  Notice that $T_s(\tau)$ has trunk $\tau$.  As the construction proceeds, the component trees $T_s(\tau)$ are either \emph{alive}, in which case they are extended, or \emph{dead}, in which case they are not extended.  If $T_s(\tau)$ is dead, then no string that is a proper extension of a leaf of $T_s(\tau)$ is ever added to $T$.  During the course of the construction, a component tree $T_s(\tau)$ may be rewritten as a union of new component trees $\bigcup_{\eta \in \leaves(T_s(\tau))}T_{s+1}(\eta)$, where $T_{s+1}(\eta) = \eta$ for each $\eta \in \leaves(T_s(\tau))$, to allow the branches of $T_s(\tau)$ to grow according to different criteria.  In this situation, when we update $R_s$ to $R_{s+1}$, we remove $\tau$ and add the elements of $\leaves(T_s(\tau))$.  To each $\tau \in R_s$ we also associate a finite set $M(\tau)$ of requirements that have been met.

At stage $0$, let $R_0 = \{\sigma\}$, $T_0(\sigma) = \sigma$, and $M(\sigma) = \emptyset$.  $T_0(\sigma)$ is alive at stage $0$.

At the beginning of stage $s+1$, we have $T_s$ represented as $T_s = \bigcup_{\tau \in R_s} T_s(\tau)$, and we have the corresponding sequence of met requirements $(M(\tau) : \tau \in R_s)$.  For each $\tau \in R_s$ do the following:

\begin{itemize}
\item[(a)] If $T_s(\tau)$ is dead, put $\tau$ in $R_{s+1}$ and let $T_{s+1}(\tau) = T_s(\tau)$.  $T_{s+1}(\tau)$ is dead.

\item[(b)] If $T_s(\tau)$ is alive:
\begin{itemize}
\item[(i)] If there are a $t \leq s$, a $\tau' \in R_t$ with $\tau' \subseteq \tau$, an $\la e, x \ra \in D$, and a $\geq\! m$-branching tree $S \subseteq k^{< s}$ with trunk $\tau'$ such that $(\forall \alpha \in \leaves(S))(\Phi_e^{f \oplus \alpha}(x)\da)$ (i.e., we learn at stage $s$ that $\tau'$ admits $\geq\! m$-branching $f$-convergence for some $\la e, x \ra \in D$), then put $\tau$ in $R_{s+1}$ and let $T_{s+1}(\tau) = T_s(\tau)$.  $T_{s+1}(\tau)$ is dead.

\item[(ii)] If (i) fails and there are an $e \in E \setminus M(\tau)$, an $x$ with $|\sigma| < x \leq s$, and a $\geq\! m2^{|D|}$-branching tree $S \subseteq T_s(\tau)$ with trunk $\tau$ such that either $(\forall \alpha \in \leaves(S))(\Phi_e^{f \oplus \alpha}(x) = \Phi_{x,s}^h(x))$ or $(\forall \alpha \in \leaves(S))(\Phi_e^{f \oplus \alpha}(x) \geq b)$, then choose the least such $e$, the least such $x$ for the chosen $e$, and the least such $S$ for the chosen $e$ and $x$.  Put all leaves of $T_s(\tau)$ in $R_{s+1}$.  Let $T_{s+1}(\eta) = \eta$ and $M(\eta) = M(\tau) \cup \{e\}$ for all $\eta \in \leaves(T_s(\tau))$.  If $\eta \in \leaves(T_s(\tau))$ extends a leaf of $S$, then $T_{s+1}(\eta)$ is alive; otherwise $T_{s+1}(\eta)$ is dead.

\item[(iii)] If (i) and (ii) fail, then put $\tau$ in $R_{s+1}$ and let $T_{s+1}(\tau) = T_s(\tau)$.  $T_{s+1}(\tau)$ is alive.
\end{itemize}
\end{itemize}
Finally, for each $\tau \in R_{s+1}$ with $T_{s+1}(\tau)$ alive, grow $T_{s+1}(\tau)$ by extending each $\alpha \in \leaves(T_{s+1}(\tau))$ to $\alpha^\smf n$ for every $n < k$.  This concludes stage $s+1$.

The tree $T$ is an $(f \oplus h)$-recursive subtree of $k^{< \Nb}$ because every $\alpha \in k^{< \Nb}$ is either in $T_s(\tau)$ for some $\tau$ in some $R_s$ or properly extends some leaf of $T_s(\tau)$ for some $\tau$ in some $R_s$ where $T_s(\tau)$ is dead.

\begin{Claim}
$T$ is infinite.
\end{Claim}

\begin{proof}[Proof of Claim]
We prove that at the end of every stage $s$ there is some $\tau \in R_s$ such that $T_s(\tau)$ is alive.  $T_s(\tau)$ thus grows at the end of stage $s$, and therefore new strings are added to $T$ at every stage.  Hence $T$ is infinite.

For each $s$, let $Q_s$ be the tree of strings extendible to a $\tau \in R_s$ such that either $T_s(\tau)$ is alive or $T_s(\tau)$ died by item~(b) part~(i) at some stage $\leq s$.  $\iso$ suffices to prove that each $Q_s$ is $\geq\! m2^{|D|}$-branching with trunk $\sigma$.  This is because a $\tau \in \leaves(Q_s)$ is extended in $Q_{s+1}$ only by the result of acting according to item~(b) part~(ii) for $\tau$ at stage $s+1$, in which case the subtree of $T_s(\tau)$ consisting of strings extendible to an $\eta \in \leaves(T_s(\tau))$ with $T_{s+1}(\eta)$ alive is $\geq\! m2^{|D|}$-branching with trunk $\tau$.  Thus in $Q_{s+1}$, $\tau$ is appended by a $\geq\! m2^{|D|}$-branching tree.

Suppose for a contradiction that, at some stage $s$, $T_s(\tau)$ is dead for all $\tau \in R_s$.  Thus each $T_s(\tau)$ for $\tau \in \leaves(Q_s)$ died by item~(b) part~(i) at some stage $\leq s$.  For each $\tau \in \leaves(Q_s)$, let $\tau' \subseteq \tau$ be such that $\tau'$ admits $\geq\! m$-branching $f$-convergence for some $\la e, x \ra \in D$ as in item~(b) part~(i) at the time of $T_s(\tau)$'s death.  Let $R = \{\tau' : (\tau \in \leaves(Q_s)) \andd \neg(\exists \eta \in \leaves(Q_s))(\eta' \subset \tau')\}$.  Let $S$ be the tree of strings extendible to some $\tau' \in R$.  $S$ is $\geq\! m2^{|D|}$-branching with trunk $\sigma$, but no leaf of $S$ forces $\geq\! m$-branching $f$-divergence for $D$.  This contradicts Lemma~\ref{lem-PreserveDivergence}.
\end{proof}

By $\wklsys$, let $p$ be an infinite path through $T$.  Using bounded $\Sigma^0_1$ comprehension, let $s \in \Nb$ and $\sigma' \subset p$ maximize $|M(\sigma')|$ over all $s \in \Nb$ and $\sigma' \in R_s$ with $\sigma' \subset p$.  Observe that the construction never acts on $T_t(\sigma')$ according to item~(b) at any stage $t > s$.  If the construction acts at stage $t > s$ according to item~(b) part~(i), then $T_t(\sigma')$ dies and $p$ could not be a path through $T$.  If the construction acts at stage $t+1 > s$ according to item~(b) part~(ii), then $p$ must extend some $\eta \in \leaves(T_t(\sigma'))$ with $T_{t+1}(\eta)$ alive, and $|M(\eta)| > |M(\sigma')|$ for all such $\eta$.  This contradicts the choice of $\sigma'$ and $s$.  It follows that $\sigma' \in R_t$ for all stages $t \geq s$.

To find $D'$, we define a $\ell_{e,t} \in \Nb$ for every $e \in E \setminus M(\sigma')$ and every $t > s$ as follows.  Let $\ell_{e,t}$ be least $>|\sigma|$ such that no tree $S \subseteq T_t(\sigma')$ with trunk $\sigma'$ witnesses that $\sigma'$ admits $\geq\! m2^{|D|+b}$-branching $f$-convergence for $\la e, \ell_{e,t} \ra$.

\begin{Claim}
$(\forall e \in E \setminus M(\sigma'))(\exists t > s)(\forall t' > t)(\ell_{e,t'} = \ell_{e,t})$.
\end{Claim}

\begin{proof}[Proof of Claim]
Let $e \in E \setminus M(\sigma')$.  The numbers $\ell_{e,t}$ are increasing in $t$, so if $(\exists t)(\forall t' > t)(\ell_{e,t'} = \ell_{e,t})$ fails, then it must be that $\lim_{t \imp \infty} \ell_{e,t} = \infty$.  Thus suppose for a contradiction that $\lim_{t \imp \infty} \ell_{e,t} = \infty$.  We then compute an eventually $\dnrstat(b,h)$ function from $f$, contradicting the hypothesis that $f$ computes no $\dnrstat(b,h)$ function and hence no eventually $\dnrstat(b,h)$ function.

Given $x \in \Nb$, if $x \leq |\sigma|$ then output $0$.  If $x > |\sigma|$, run the construction to a stage $t > s$ such that $t, \ell_{e,t} > x$ and there are an $i < b$ and a $\geq\! m2^{|D|}$-branching tree $S \subseteq T_t(\sigma')$ such that $(\forall \alpha \in \leaves(S))(\Phi_e^{f \oplus \alpha}(x) = i)$.  Then output the least such $i$.  This procedure describes a $b$-valued partial $f$-recursive function $\Phi^f$.  To see that $\Phi^f(x)$ converges for $x > |\sigma|$, observe that there is a $t > s,x$ such that $\ell_{e,t} > x$ because $\lim_{t \imp \infty} \ell_{e,t} = \infty$ and that at such a stage $t$, by the definition of $\ell_{e,t}$, there must be a tree $S' \subseteq T_t(\sigma')$ with trunk $\sigma'$ witnessing that $\sigma'$ admits $\geq\! m2^{|D|+b}$-branching $f$-convergence for $\la e, x \ra$.  For each $i < b$, let $P_i$ be the tree consisting of the strings in $S'$ that are extendible to an $\alpha \in \leaves(S')$ such that $\Phi_e^{f \oplus \alpha}(x) = i$, and let $P_b$ be the tree consisting of the strings in $S'$ that are extendible to an $\alpha \in \leaves(S')$ such that $\Phi_e^{f \oplus \alpha}(x) \geq b$.  Then $S' \subseteq \bigcup_{i < b+1}P_i$, so by Lemma~\ref{lem-CoverWithNTrees} there is a $\geq\! m2^{|D|}$-branching tree $S \subseteq S'$ with trunk $\sigma'$ such that $S \subseteq P_i$ for some $i < b+1$.  If $S \subseteq P_b$, then the construction would have acted on $T_{t'}(\sigma')$ according to item~(b) part~(ii) at some stage $t' > s$, contradicting the choice of $s$.  Thus $S \subseteq P_i$ for some $i < b$.  Thus there are indeed a stage $t > s$ with $t,\ell_{e,t} > x$, an $i < b$, and a $\geq\! m2^{|D|}$-branching tree $S \subseteq T_t(\sigma')$ such that $(\forall \alpha \in \leaves(S))(\Phi_e^{f \oplus \alpha}(x) = i)$.  So $\Phi^f$ is total.  To see that $\Phi^f(x) \neq \Phi_x^h(x)$ for all $x > |\sigma|$, suppose for a contradiction that $x > |\sigma|$ is such that $\Phi^f(x) = \Phi_x^h(x)$.  By the definition of $\Phi^f(x)$, there are a stage $t > s$ and a $\geq\! m2^{|D|}$-branching tree $S \subseteq T_t(\sigma')$ such that $(\forall \alpha \in \leaves(S))(\Phi_e^{f \oplus \alpha}(x) = \Phi_x^h(x))$.  Then the construction would have acted on $T_{t'}(\sigma')$ according to item~(b) part~(ii) at some stage $t' > s$, contradicting the choice of $s$.  Thus $\Phi^f$ is eventually $\dnrstat(b,h)$, contradicting that $f$ computes no such function.  Therefore we cannot have $\lim_{t \imp \infty} \ell_{e,t} = \infty$, hence $(\exists t)(\forall t' > t)(\ell_{e,t'} = \ell_{e,t})$ as desired.
\end{proof}

Applying $\bst$ to the claim, we have that, in fact, $(\exists t_0 > s)(\forall e \in E \setminus M(\sigma'))(\forall t' > t_0)(\ell_{e,t'} = \ell_{e,t_0})$.  For each $e \in E \setminus M(\sigma')$, let $\ell_e = \ell_{e,t_0}$.  Then let $D' = D \cup \{\la e, \ell_e \ra : e \in E \setminus M(\sigma')\}$.  We show that $\sigma'$ and $D'$ satisfy the conclusion of the lemma.  The inequality $|D'| \leq |D| + |E|$ is clear.

First, $\sigma'$ forces $\geq\! m$-branching $f$-divergence (and hence $\geq\! m2^{|D|+b}$-branching $f$-divergence) for $D$, otherwise the construction would act according to item~(a) part~(i) at some stage past $s$.  To see that $\sigma'$ forces $\geq\! m2^{|D|+b}$-branching $f$-divergence for each of the $\la e, \ell_e \ra$ with $e \in E \setminus M(\sigma')$, suppose not and let $\la e, \ell_e \ra$, with $e \in E \setminus M(\sigma')$, and $S$, a tree with trunk $\sigma'$, be such that $S$ witnesses that $\sigma'$ admits $\geq\! m2^{|D|+b}$-branching $f$-convergence for $\la e, \ell_e \ra$.  As the construction never acts on $T_t(\sigma')$ for $t > s$, there is a stage $t > t_0$ with $S \subseteq T_t(\sigma')$.  Thus at stage $t+1$ there is a tree $S \subseteq T_t(\sigma')$ with trunk $\sigma'$ witnessing that $\sigma'$ admits $\geq\! m2^{|D|+b}$-branching $f$-convergence for $\la e, \ell_e \ra = \la e, \ell_{e,t} \ra$, contradicting the choice of $\ell_{e,t}$.

Finally, we show that, for each $e \in E$, either $(\exists \ell > |\sigma|)(\Phi_e^{f \oplus \sigma'}(\ell) \geq b)$, $(\exists \ell > |\sigma|)(\Phi_e^{f \oplus \sigma'}(\ell) = \Phi_\ell^h(\ell))$, or $\exists \ell(\la e, \ell \ra \in D')$.  By the definition of $D'$, if $e \in E \setminus M(\sigma')$ then there is an $\ell$ such that $\la e, \ell \ra \in D'$.  Thus we need to show that if $e \in M(\sigma')$ then either $(\exists \ell > |\sigma|)(\Phi_e^{f \oplus \sigma'}(\ell) \geq b)$ or $(\exists \ell > |\sigma|)(\Phi_e^{f \oplus \sigma'}(\ell) = \Phi_\ell^h(\ell))$.  Suppose that $e \in M(\sigma')$, and let $t+1 \leq s$ be least such that $e \in M(\eta)$ for some $\eta \subseteq \sigma'$ with $\eta \in R_{t+1}$.  Then $e$ entered $M(\eta)$ at stage $t+1$ by an action according to item~(b) part~(ii).  Thus at stage $t+1$ there must have been a $\tau \in R_t$ with $\eta \in \leaves(T_t(\tau))$ and a least $x$ with $|\sigma| < x \leq t$ having a least $\geq\! m2^{|D|}$-branching tree $S \subseteq T_t(\tau)$ with trunk $\tau$ such that either $(\forall \alpha \in \leaves(S))(\Phi_e^{f \oplus \alpha}(x) = \Phi_{x,t}^h(x))$ or $(\forall \alpha \in \leaves(S))(\Phi_e^{f \oplus \alpha}(x) \geq b)$.  Moreover, $\eta$ must extend a leaf of $S$ because $T_{t+1}(\eta)$ must be alive because $\eta$ is an initial segment of a path through $T$.  So if $(\forall \alpha \in \leaves(S))(\Phi_e^{f \oplus \alpha}(x) = \Phi_{x,t}^h(x))$, then $(\exists \ell > |\sigma|)(\Phi_e^{f \oplus \sigma'}(\ell) = \Phi_\ell^h(\ell))$, and if $(\forall \alpha \in \leaves(S))(\Phi_e^{f \oplus \alpha}(x) \geq b)$ then $(\exists \ell > |\sigma|)(\Phi_e^{f \oplus \sigma'}(\ell) \geq b)$.
\end{proof}

\begin{Lemma}[$\rcasys$]\label{lem-Low}
Let $f$ be a function, let $\sigma$ be a string, and let $D$ be a finite coded set such that $\sigma$ forces $\geq\! m$-branching $f$-divergence for $D$, where $m2^{|D|} < k$.  Then for every finite coded set $E$ there is a string $\sigma' \supseteq \sigma$ with the following property.  Let $E' = \{e \in E : \Phi_e^{f \oplus \sigma'}(e)\ua\}$, and let $e'$ be an index for a program such that, for any function $g$, $(\Phi_{e'}^g(e')\da) \biimp (\exists e \in E')(\Phi_e^g(e)\da)$.  Then $\sigma'$ forces $\geq\! m2^{|D|}$-branching $f$-divergence for $D \cup \{\la e', e' \ra\}$.
\end{Lemma}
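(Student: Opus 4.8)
The plan is to follow the template of the proof of Lemma~\ref{lem-AvoidDNR2}, specialized to the simpler situation here: build a tree of candidate conditions whose infinite paths all pass through a string with the desired property, prove the tree is infinite using Lemma~\ref{lem-PreserveDivergence}, and extract a path with weak K\"onig's lemma. Since the statement of Lemma~\ref{lem-Low} is arithmetical (with $f$ as a parameter), it is harmless to argue in $\wklsys$ and then appeal to the $\Pi^1_1$-conservativity of $\wklsys$ over $\rcasys$. The new wrinkle compared with Lemma~\ref{lem-AvoidDNR2} is that the whole finite block $E$ must be collapsed into the \emph{single} requirement $\la e', e'\ra$, both so that $D$ grows by only one pair (matching the conclusion) and so that one avoids collecting over $|E|$-many stabilization stages --- which is why this lemma, unlike Lemma~\ref{lem-AvoidDNR2}, should go through without $\bst$.

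First I would fix, by the recursion theorem, an index $e'$ so that on any oracle $g$ the program $\Phi_{e'}^g$ halts on input $e'$ exactly when some $e$ in the set $\{e \in E : \Phi^g_e(e)\ua\}$ has $\Phi^g_e(e)\da$; this set is coded, being a $\Pi^0_1$ subset of the finite set $E$, and a suitable index for it can be read off uniformly from $g$. Then I would grow the tree $T$ in stages exactly as in Lemma~\ref{lem-AvoidDNR2}: a component dies when its trunk is seen to admit $\geq\! m$-branching $f$-convergence for a pair already in $D$, or to admit $f$-convergence (at the appropriate branching threshold) for the pair $\la e', e'\ra$; and a component is rewritten when some still-unhandled $e \in E$ and some sufficiently branching subtree of the component make $\Phi^{f\oplus\cdot}_e(e)\!\da$ along all of that subtree's leaves, keeping alive only the part extending that subtree and recording $e$ as handled. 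As in Lemma~\ref{lem-AvoidDNR2}, one proves in $\iso$, via Lemma~\ref{lem-PreserveDivergence} and the hypothesis relating $m$, $|D|$, and $k$, that at the end of every stage some component is still alive, so $T$ is infinite; one then picks a path $p$ by $\wklstat$ and, by bounded $\Sigma^0_1$ comprehension, a node $\sigma' \subset p$ maximizing the number of handled requirements. As in Lemma~\ref{lem-AvoidDNR2}, no action is taken on the component of $\sigma'$ after that stage, whence $\sigma'$ forces $\geq\! m$-branching $f$-divergence for $D$, the set $E' = \{e \in E : \Phi^{f\oplus\sigma'}_e(e)\ua\}$ is exactly the set of unhandled requirements (a handled $e$ has $\Phi^{f\oplus\sigma'}_e(e)\da$ since $\sigma'$ extends a leaf of its witnessing subtree; an unhandled $e$ has $\Phi^{f\oplus\sigma'}_e(e)\ua$, else the tree consisting of $\sigma'$ and its immediate successors would force a rewrite at a later stage), and $\sigma'$ admits no sufficiently branching $f$-convergence tree with trunk $\sigma'$ for the pair $\la e', e'\ra$.

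Granting the last point, the conclusion is immediate: $\sigma'$ forces $\geq\! m$-branching, hence $\geq\! m2^{|D|}$-branching, $f$-divergence for $D$, and by construction $\sigma'$ forces $\geq\! m2^{|D|}$-branching $f$-divergence for $\la e', e'\ra$, so it forces $\geq\! m2^{|D|}$-branching $f$-divergence for $D \cup \{\la e', e'\ra\}$. I expect the main obstacle to be the branching bookkeeping needed to justify the assertions of the previous paragraph: one must choose the branching thresholds in the death and rewrite clauses, and state precisely the divergence claim proved about the component of $\sigma'$, so that (i) the infinitude argument still only invokes Lemma~\ref{lem-PreserveDivergence} under the hypothesis $m2^{|D|} < k$, and (ii) treating $\la e', e'\ra$ as a single requirement genuinely yields $\geq\! m2^{|D|}$-branching divergence for it --- this is where Lemma~\ref{lem-CoverWithNTrees} and Lemma~\ref{lem-PreserveDivergence} must be combined with the observation that $\Phi^{f\oplus\alpha}_{e'}(e')\da$ unwinds into a disjunction over the finitely many $e \in E'$. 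A secondary point requiring care is that $\Phi_{e'}$ depends on $E'$, which is only approximated at finite stages; this is handled exactly as in priority arguments appealing to the recursion theorem, using that the finite approximations to $E'$ stabilize along $p$.
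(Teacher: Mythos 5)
Your overall template (argue in $\wklsys$ via $\Pi^1_1$-conservativity, grow a tree of components, use Lemma~\ref{lem-PreserveDivergence} for infinitude, extract $\sigma'$ from a path) matches the paper, and your opening remark that $E$ must be collapsed into the single requirement $\la e',e'\ra$ is exactly the right idea. But the construction you actually describe does not carry out that collapse: your rewrite clause fires on an \emph{individual} unhandled $e\in E$ converging on all leaves of a $\geq\! m2^{|D|}$-branching subtree, and you defer the recombination into $\la e',e'\ra$ to the end. That recombination is where the argument breaks. After the handled set stabilizes at $\sigma'$, what you know is that $\sigma'$ forces $\geq\! m2^{|D|}$-branching $f$-divergence for each individual $\la e,e\ra$ with $e\in E'$. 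To pass to the disjunction $\la e',e'\ra$ you must, as you say, partition the leaves of a putative convergence tree by which $e\in E'$ converges and apply Lemma~\ref{lem-CoverWithNTrees}; but that lemma only extracts a $\geq\! m2^{|D|}$-branching single-$e$ convergence tree from a $\geq\! m2^{|D|}\cdot 2^{|E'|-1}$-branching tree for $e'$. So the most you can conclude is that $\sigma'$ forces $\geq\! m2^{|D|+|E'|-1}$-branching $f$-divergence for $\la e',e'\ra$, which is strictly weaker than the stated conclusion (forcing divergence at a \emph{lower} branching level is the stronger property). This is not a removable constant: in the application inside Theorem~\ref{thm-DNRKwithoutDNR2} one takes $E=\{t: t\leq |\sigma_s|\}$, and $|\sigma_s|$ dominates values of the cofinal function $c$, so a loss of $2^{|E|-1}$ cannot be absorbed by the recursion $K(b_{\max},s+1)=K(b_{\max},s)2^{s^2+b_{\max}+1}$ and the cut argument collapses. (A two-element example already shows the loss is real: a $\geq\! j$-branching tree can have every leaf converge for $e_0$ or for $e_1$ without any $\geq\! j$-branching subtree converging for a single one of them.)

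The fix is to make the combined index the object of the rewrite clause itself. To each node $\tau$ associate $E(\tau)=\{e\in E: \Phi_e^{f\oplus\tau}(e)\ua\}$ and an index $e(\tau)$ with $(\Phi_{e(\tau)}^g(e(\tau))\da)\biimp(\exists e\in E(\tau))(\Phi_e^g(e)\da)$, and rewrite a component when $e(\tau)$ \emph{itself} is seen to converge on all leaves of a $\geq\! m2^{|D|}$-branching subtree $S$, keeping alive only the extensions of leaves of $S$. On those extensions some member of $E(\tau)$ has converged, so $|E(\cdot)|$ strictly decreases at each rewrite along the path; choosing $\sigma'\subset p$ minimizing $|E(\sigma')|$ (bounded $\Sigma^0_1$ comprehension) guarantees no further action, and then the failure of the rewrite clause at $\sigma'$ \emph{directly} says that $\sigma'$ forces $\geq\! m2^{|D|}$-branching $f$-divergence for $\la e(\sigma'),e(\sigma')\ra=\la e',e'\ra$ --- no post hoc application of Lemma~\ref{lem-CoverWithNTrees} and no exponential loss. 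This also disposes of your "secondary point": $e'$ is just $e(\sigma')$, computed uniformly from the node, so no recursion-theorem stabilization argument is needed. Finally, note there should be no separate "death" clause for $\la e',e'\ra$ as your second paragraph suggests; killing components on which $e'$ converges would wreck the infinitude argument, whereas moving into the convergence tree (and thereby shrinking $E(\cdot)$) is what makes the construction terminate correctly.
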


\begin{proof}
The proof is similar to that of Lemma~\ref{lem-AvoidDNR2}.  We prove the lemma in $\wklsys$, which suffices because $\wklsys$ is $\Pi^1_1$-conservative over $\rcasys$ (see \cite{Simpson:2009vv}~Corollary~IX.2.7).  Thus we construct an infinite tree $T$ with trunk $\sigma$ such that every infinite path through $T$ has an initial segment $\sigma'$ that satisfies the conclusion of the lemma.

As in Lemma~\ref{lem-AvoidDNR2}, $T$ grows in stages $(T_s : s \in \Nb)$, where $T_s = \bigcup_{\tau \in R_s} T_s(\tau)$ and, for each $\tau \in R_s$, $T_s(\tau)$ is $\tau^\smf k^{<t}$ for some $t \in \Nb$.  The component trees are either alive or dead, as before.  To every $s \in \Nb$ and $\tau \in R_s$ we associate the set $E(\tau) = \{e \in E : \Phi_e^{f \oplus \tau}(e)\ua\}$ and the index $e(\tau)$, where $(\Phi_{e(\tau)}^g(e(\tau))\da) \biimp (\exists e \in E(\tau))(\Phi_e^g(e)\da)$.

At stage $0$, let $R_0 = \{\sigma\}$ and $T_0(\sigma) = \sigma$.  $T_0(\sigma)$ is alive at stage $0$.

At the beginning of stage $s+1$ we have $T_s$ represented as $T_s = \bigcup_{\tau \in R_s} T_s(\tau)$, and we have the corresponding auxiliary information $(E(\tau) : \tau \in R_s)$ and $(e(\tau) : \tau \in R_s)$.  For each $\tau \in R_s$ do the following:

\begin{itemize}
\item[(a)] If $T_s(\tau)$ is dead, put $\tau$ in $R_{s+1}$ and let $T_{s+1}(\tau) = T_s(\tau)$.  $T_{s+1}(\tau)$ is dead.

\item[(b)] If $T_s(\tau)$ is alive:
\begin{itemize}
\item[(i)] If there are a $t \leq s$, a $\tau' \in R_t$ with $\tau' \subseteq \tau$, an $\la e, x \ra \in D$, and a $\geq\! m$-branching tree $S \subseteq k^{< s}$ with trunk $\tau'$ such that $(\forall \alpha \in \leaves(S))(\Phi_e^{f \oplus \alpha}(x)\da)$ (i.e., we learn at stage $s$ that $\tau'$ admits $\geq\! m$-branching $f$-convergence for $\la e, x \ra \in D$), then put $\tau$ in $R_{s+1}$ and let $T_{s+1}(\tau) = T_s(\tau)$.  $T_{s+1}(\tau)$ is dead.

\item[(ii)] If (i) fails and there is a $\geq\! m2^{|D|}$-branching tree $S \subseteq T_s(\tau)$ with trunk $\tau$ such that $(\forall \alpha \in \leaves(S))(\Phi_{e(\tau)}^{f \oplus \alpha}(e(\tau))\da)$, then choose the first such $S$.  Put all leaves of $T_s(\tau)$ in $R_{s+1}$, and let $T_{s+1}(\eta) = \eta$ for all $\eta \in \leaves(T_s(\tau))$.  If $\eta \in \leaves(T_s(\tau))$ extends a leaf of $S$, then $T_{s+1}(\eta)$ is alive; otherwise $T_{s+1}(\eta)$ is dead.
\end{itemize}
\end{itemize}
Finally, for each $\tau \in R_{s+1}$ with $T_{s+1}(\tau)$ alive, grow $T_{s+1}(\tau)$ by extending each $\alpha \in \leaves(T_{s+1}(\tau))$ to $\alpha^\smf n$ for every $n < k$.  This concludes stage $s+1$.

The tree $T$ is an infinite $f$-recursive subtree of $k^{< \Nb}$ by arguments similar to those in the proof of Lemma~\ref{lem-AvoidDNR2}.  By $\wklsys$, let $p$ be an infinite path through $T$.  Using bounded $\Sigma^0_1$ comprehension, let $s \in \Nb$ and $\sigma' \subset p$ minimize $|E(\sigma')|$ over all $s \in \Nb$ and $\sigma' \in R_s$ with $\sigma' \subset p$.  Then $\sigma'$ satisfies the conclusion of the lemma.  Note that the corresponding $e'$ is $e(\sigma')$.  As in the proof of Lemma~\ref{lem-AvoidDNR2}, the construction never acts on $T_t(\sigma')$ according to item~(b) at any stage $t > s$.  Consequently, $\sigma'$ forces $\geq\! m$-branching $f$-divergence (and hence $\geq\! m2^{|D|}$-branching $f$-divergence) for $D$ because otherwise the construction would act according to item~(b) part~(i) at some stage past $s$.  Similarly, $\sigma'$ forces $\geq\! m2^{|D|}$-branching $f$-divergence for $\la e', e' \ra = \la e(\sigma'), e(\sigma') \ra$ because otherwise the construction would act on $T_t(\sigma')$ according to item~(b) part~(ii) at some stage $t > s$.  Thus $\sigma'$ forces $\geq\! m2^{|D|}$-branching $f$-divergence for $D \cup \{\la e', e' \ra\}$.
\end{proof}

\begin{proof}[Proof of Theorem~\ref{thm-DNRKwithoutDNR2}]
Let $M$, $I$, $n$, $f$, $\vec h$, $\vec b$, $b_{\max}$, $k_0$, and $k$ be as in the statement of Theorem~\ref{thm-DNRKwithoutDNR2}.  For each $i < n$, fix an index $w_i$ such that $\forall x(\Phi_{w_i}^f(x) = \Phi^{h_i}_x(x))$.  The proof of Lemma~\ref{lem-cut} shows that there is an increasing, cofinal function $c \colon I \imp \Nb$ whose graph is $\Delta^0_2$.

We build a $\Delta^0_2(f)$ sequence $(\la \sigma_s, D_s \ra : s \in J)$ in stages, where $J \subseteq I$ is a $\Sigma^0_2(f)$ cut determined during the course of the construction.  In the end, we set $g = \bigcup_{s \in J}\sigma_s$.  Let $e_0$ be an index such that, for any $g$ and $x$, $(\Phi_{e_0}^{f \oplus g}(x)\da) \biimp \exists e(g(e) = \Phi_e^f(e))$.  At stage $0$, set $\sigma_0 = \emptyset$ and set $D_0 = \{\la e_0, e_0 \ra\}$.
\begin{itemize}
\item At stage $s+1 \equiv 0 \mod n+2$, search for the least $\sigma_{s+1} \supseteq \sigma_s$ such that $|\sigma_{s+1}| > \max\{c(s),D_s\}$ (where here $D_s$ is interpreted as the number coding the set $D_s$) and that $\sigma_{s+1}$ forces $\geq\! K(b_{\max},s+1)$-branching $f$-divergence for $D_s$.  Let $D_{s+1} = D_s$.

\item At stage $s+1 \equiv i+1 \mod n+2$ for an $i < n$, search for the least pair $\la \sigma', D' \ra$ as in the conclusion of Lemma~\ref{lem-AvoidDNR2} for $f$, $h = h_i$, $b = b_i$, $\sigma = \sigma_s$, $D = D_s$, $m = K(b_{\max},s)$, and $E = \{d(w_i, |\sigma_t|, b_i) : t \leq s\}$.  Let $\sigma_{s+1} = \sigma'$ and let $D_{s+1} = D'$.

\item At stage $s+1 \equiv n+1 \mod n+2$, search for the least $\sigma'$ as in the conclusion of Lemma~\ref{lem-Low} for $f$, $\sigma = \sigma_s$, $D = D_s$, $m=K(b_{\max}, s)$, and $E = \{t : t \leq |\sigma_s|\}$.  Let $\sigma_{s+1} = \sigma'$ and let $D_{s+1} = D \cup \{\la e', e' \ra\}$.
\end{itemize}

Let $J$ be the set of $s \in \Nb$ such that the construction reaches stage $s$.  That is, $J$ is the set of $s \in \Nb$ for which there is a sequence $(\la \sigma_j, D_j \ra : j \leq s)$ where $\sigma_0 = \emptyset$, $D_0 = \{\la e_0, e_0 \ra\}$, and, for all $j<s$, $\la \sigma_{j+1}, D_{j+1} \ra$ follows from $\la \sigma_j, D_j \ra$ according to the rules of the construction.  Checking whether $\la \sigma_{j+1}, D_{j+1} \ra$ follows from $\la \sigma_j, D_j \ra$ is $\Delta^0_2(f)$, so $J$ is $\Sigma^0_2(f)$.

Clearly $J$ is downward closed.  To see $J \subseteq I$, let $s \in J$ and let $n_0 < n+2$ be such that $s-n_0 \equiv 0 \mod n+2$.  Then $s-n_0$ must be in $I$ because $c(s-n_0)$ must be defined in order for $s-n_0$ to be in $J$.  Hence $s \in I$ because $I$ is a cut and $n_0 \in \omega$.

Notice that at stage $s+1$ at most $s+1$ elements are added to $D_{s+1}$.  Therefore, for all $s \in J$, $|D_s| \leq 1+ \sum_{j \leq s} j = \frac{1}{2}(s^2+s) + 1 \leq s^2 + 1$ (the `$+1$' is because $|D_0| = 1$, not $|D_0| = 0$).

\begin{Claim}
If $s \in J$, then $\sigma_s$ forces $\geq\! K(b_{\max}, s)$-branching $f$-divergence for $D_s$.
\end{Claim}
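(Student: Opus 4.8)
The plan is to verify the claim by cases on $s$, since the statement at a given $s$ can be read off the construction directly, with no appeal to smaller stages. I would use throughout the elementary monotonicity of forcing divergence: a $\geq\! m'$-branching convergence tree is in particular a $\geq\! m$-branching one whenever $m'\geq m$, so if $\sigma$ forces $\geq\! m$-branching $f$-divergence for $D$ then $\sigma$ forces $\geq\! m'$-branching $f$-divergence for $D$ for every $m'\geq m$.

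For $s=0$, where $\sigma_0=\emptyset$, $D_0=\{\la e_0,e_0\ra\}$ and $K(b_{\max},0)=2$, I would show that $\emptyset$ forces $\geq\! 2$-branching $f$-divergence for $\la e_0,e_0\ra$ by a diagonalization inside a tree. Assuming the contrary, fix a finite $\geq\! 2$-branching tree $T$ with trunk $\emptyset$ all of whose leaves $\alpha$ satisfy $\Phi_{e_0}^{f\oplus\alpha}(e_0)\da$; by the choice of $e_0$ this means $(\exists e<|\alpha|)(\Phi_e^f(e)\da=\alpha(e))$ for every leaf $\alpha$, and then $\bso$ supplies a single $t$ with $(\forall\alpha\in\leaves(T))(\exists e<|\alpha|)(\Phi_{e,t}^f(e)\da=\alpha(e))$. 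The set $A=\{\alpha\in T:(\forall e<|\alpha|)\,\neg(\Phi_{e,t}^f(e)\da=\alpha(e))\}$ then exists by $\Delta^0_1$ comprehension, contains $\emptyset$, and contains no leaf of $T$; but every node of $A$ that is not a leaf of $T$ has at least two immediate successors in $T$, at most one of which is excluded from $A$, and hence has a successor in $A$. A longest element of $A$ is therefore a leaf of $T$, a contradiction, so $\emptyset$ forces $\geq\! 2$-branching $f$-divergence for $D_0$.

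For $s\geq 1$ with $s\in J$, I would note first that $s-1\in J$ (as $J$ is a cut), so $|D_{s-1}|\leq (s-1)^2+1$ by the bound recorded just above. If $s\equiv 0\bmod(n+2)$ then, by the rule applied at this stage, $\sigma_s$ was chosen to force $\geq\! K(b_{\max},s)$-branching $f$-divergence for $D_{s-1}=D_s$, and nothing more is needed. If $s\equiv i+1\bmod(n+2)$ for some $i<n$, then $\la\sigma_s,D_s\ra$ is a pair witnessing the conclusion of Lemma~\ref{lem-AvoidDNR2} for $h=h_i$, $b=b_i$, $\sigma=\sigma_{s-1}$, $D=D_{s-1}$, $m=K(b_{\max},s-1)$, and clause~(ii) of that conclusion is exactly that $\sigma_s$ forces $\geq\! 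K(b_{\max},s-1)\,2^{|D_{s-1}|+b_i}$-branching $f$-divergence for $D_s$. Since $|D_{s-1}|\leq (s-1)^2+1$ and $b_i\leq b_{\max}$ we have $K(b_{\max},s-1)\,2^{|D_{s-1}|+b_i}\leq K(b_{\max},s-1)\,2^{(s-1)^2+b_{\max}+1}=K(b_{\max},s)$, so monotonicity gives that $\sigma_s$ forces $\geq\! K(b_{\max},s)$-branching $f$-divergence for $D_s$. If $s\equiv n+1\bmod(n+2)$ the same argument applies with Lemma~\ref{lem-Low}, whose conclusion gives $\sigma_s$ forcing $\geq\! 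K(b_{\max},s-1)\,2^{|D_{s-1}|}$-branching $f$-divergence for $D_s$, and $K(b_{\max},s-1)\,2^{|D_{s-1}|}\leq K(b_{\max},s-1)\,2^{(s-1)^2+b_{\max}+1}=K(b_{\max},s)$.

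The real content of Theorem~\ref{thm-DNRKwithoutDNR2} sits in Lemmas~\ref{lem-AvoidDNR2} and~\ref{lem-Low}, which may be invoked here, so for the claim itself the main obstacle is just the base case: the diagonalization must be arranged so that the auxiliary set $A$ is $\Delta^0_1(f)$ rather than only $\Pi^0_1(f)$, which is why one passes first to the uniform step bound $t$ provided by $\bso$; after that, $\geq\! 2$-branching of $T$ is all that is needed to thread a witness-free leaf through. The numerical estimates in the remaining cases are mechanical once $|D_{s-1}|\leq (s-1)^2+1$ is in hand, and are precisely what the exponent $s^2+b+1$ in the recursion defining $K$ is calibrated to deliver.
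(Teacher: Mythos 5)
Your proof is correct and follows essentially the same route as the paper's: the same diagonalization for the base case and the same numerical estimates (reading conclusion~(ii) of Lemma~\ref{lem-AvoidDNR2}, resp.\ the conclusion of Lemma~\ref{lem-Low}, off the construction and combining with $|D_{s-1}|\leq(s-1)^2+1$ and the monotonicity of forcing divergence), the only cosmetic difference being that the paper packages the case analysis as a $\Pi^0_1$ induction along the witnessing sequence while you observe, correctly, that the relevant property at stage $s$ is recorded directly by the rule applied at that stage. One small wording point: where you write ``as $J$ is a cut'' you should invoke only the downward closure of $J$ (immediate from its definition, and noted in the paper before this claim), since the successor-closure half of the cut property is established only in the next claim, whose proof uses the present one.
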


\begin{proof}[Proof of Claim]
Let $(\la \sigma_j, D_j \ra : j \leq s)$ be a witness to $s \in J$.  We prove the claim by $\Pi^0_1$ induction on $j \leq s$.  To see that $\sigma_0 = \emptyset$ forces $\geq\! 2$-branching $f$-divergence for $D_0 = \{\la e_0, e_0 \ra\}$, consider a $\geq\! 2$-branching tree $T$ with trunk $\emptyset$.  Let $t$ be the height of $T$, and, by bounded $\Sigma^0_1$ comprehension, let $X = \{e < t : \Phi_e^f(e)\da\}$.  As $T$ is $\geq\! 2$-branching, we can find an $\alpha \in \leaves(T)$ such that $(\forall e \in X)(\alpha(e) \neq \Phi_e^f(e))$.  Then $\Phi_{e_0}^{f \oplus \alpha}(e_0)\ua$, showing that $T$ does not witness that $\emptyset$ admits $\geq\! 2$-branching $f$-convergence for $\la e_0, e_0 \ra$, as desired.

Now suppose that $j < s$ and that $\sigma_j$ forces $\geq\! K(b_{\max}, j)$-branching $f$-divergence for $D_j$.

\begin{itemize}
\item If $j+1 \equiv 0 \mod n+2$, then $\sigma_{j+1}$ forces $\geq\! K(b_{\max}, j+1)$-branching $f$-divergence for $D_{j+1}$ by definition.

\item If $j+1 \equiv i+1 \mod n+2$ for an $i < n$, then $\sigma_{j+1}$ forces $\geq\! K(b_{\max}, j)2^{|D_j|+b_i}$-branching $f$-divergence for $D_{j+1}$ by definition (refer to the statement of Lemma~\ref{lem-AvoidDNR2}).  As $|D_j| + b_i \leq j^2 + b_{\max} + 1$, $\sigma_{j+1}$ forces $\geq\! K(b_{\max}, j+1)$-branching $f$-divergence for $D_{j+1}$.

\item If $j+1 \equiv n+1 \mod n+2$, then $\sigma_{j+1}$ forces $\geq\! K(b_{\max}, j)2^{|D_j|}$-branching $f$-divergence for $D_{j+1}$ by definition (refer to the statement of Lemma~\ref{lem-Low}).  As $|D_j| \leq j^2 + b_{\max} + 1$, $\sigma_{j+1}$ forces $\geq\! K(b_{\max}, j+1)$-branching $f$-divergence for $D_{j+1}$.
\end{itemize}
\end{proof}

\begin{Claim}
$J$ is a cut.
\end{Claim}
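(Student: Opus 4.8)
Since $J$ has already been observed to be downward closed, it remains to prove that $s \in J$ implies $s+1 \in J$, i.e., that once the construction has reached stage $s$ it can carry out stage $s+1$.  Each subcase of the construction searches for the \emph{least} object of a certain kind, so it is enough to show that at least one such object exists; my plan is to obtain this, in each of the three subcases, from the corresponding auxiliary lemma (Lemma~\ref{lem-PreserveDivergence}, Lemma~\ref{lem-AvoidDNR2}, or Lemma~\ref{lem-Low}).

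First I would gather the numerical facts available at stage $s$.  Since $J \subseteq I$ and $I$ is a cut, $s+1 \in I$, hence $s+1 < k_0$; as $K(b_{\max},\cdot)$ is strictly increasing this gives $K(b_{\max},s+1) < K(b_{\max},k_0) = k$.  Combined with the bound $|D_s| \leq s^2+1$ established just above and with $b_i \leq b_{\max}$, this produces the key inequality
\begin{align*}
K(b_{\max},s)\,2^{|D_s|+b_i} \leq K(b_{\max},s)\,2^{s^2+b_{\max}+1} = K(b_{\max},s+1) < k
\end{align*}
for every $i < n$, and likewise with $b_i$ replaced by $0$, so in particular $K(b_{\max},s)\,2^{|D_s|} < k$.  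Also, by the previous claim $\sigma_s$ forces $\geq\! K(b_{\max},s)$-branching $f$-divergence for $D_s$, and $c(s)$ is defined because $s \in J \subseteq I$.

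Next I would treat the three subcases according to the residue of $s+1$ modulo $n+2$.  If $s+1 \equiv 0 \mod n+2$, I would apply Lemma~\ref{lem-PreserveDivergence} with $m = K(b_{\max},s)$ and $D = D_s$ (its hypothesis $m2^{|D_s|} < k$ being part of the key inequality) to a full $k$-ary tree above $\sigma_s$ of large enough height; this tree has trunk $\sigma_s$ and is $\geq\! K(b_{\max},s)2^{|D_s|}$-branching because $k > K(b_{\max},s)2^{|D_s|}$, so it has a leaf $\sigma_{s+1}$ with $|\sigma_{s+1}| > \max\{c(s),D_s\}$ forcing $\geq\! K(b_{\max},s)$-branching $f$-divergence for $D_s$; since forcing $\geq\! m$-branching $f$-divergence entails forcing $\geq\! m'$-branching $f$-divergence whenever $m' \geq m$, this $\sigma_{s+1}$ also forces $\geq\! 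K(b_{\max},s+1)$-branching $f$-divergence for $D_s$, which is what the construction seeks at this stage (with $D_{s+1} = D_s$).  If $s+1 \equiv i+1 \mod n+2$ for some $i < n$, I would check that the hypotheses of Lemma~\ref{lem-AvoidDNR2} hold for $f$, $h = h_i$, $b = b_i$, $\sigma = \sigma_s$, $D = D_s$, $m = K(b_{\max},s)$: $f$ computes no $\dnrstat(b_i,h_i)$ function by assumption, $\sigma_s$ forces $\geq\! m$-branching $f$-divergence for $D_s$ by the previous claim, and $m2^{|D_s|+b_i} < k$ by the key inequality; the lemma then supplies a pair $\la\sigma',D'\ra$ of the required form.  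If $s+1 \equiv n+1 \mod n+2$, I would likewise verify the hypotheses of Lemma~\ref{lem-Low} for $f$, $\sigma = \sigma_s$, $D = D_s$, $m = K(b_{\max},s)$ (its hypothesis being $m2^{|D_s|} < k$) and obtain the required $\sigma'$.  In every case an appropriate object exists, so the construction reaches stage $s+1$.

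The step I expect to be the crux is the arithmetic bookkeeping of the preceding paragraph: one must be certain that the branching demand $K(b_{\max},s)\,2^{|D_s|+b_{\max}}$ never reaches $k$ while $s$ ranges over $J$.  This is exactly why $k$ was chosen to equal $K(b_{\max},k_0)$ for a $k_0$ above the cut $I$, why $|D_s|$ was bounded by $s^2+1$, and why $K$ was defined by the recursion $K(b,s+1) = K(b,s)2^{s^2+b+1}$; once this inequality is in hand, the rest is a routine invocation of the three lemmas and of the fact that divergence-forcing is monotone in the branching parameter.
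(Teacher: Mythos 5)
Your proposal is correct and follows essentially the same route as the paper: establish the key inequality $K(b_{\max},s)2^{|D_s|+b_{\max}} \leq K(b_{\max},s+1) < K(b_{\max},k_0) = k$ using $s+1 \in I$ and $|D_s| \leq s^2+1$, then discharge the three residue cases via Lemma~\ref{lem-PreserveDivergence}, Lemma~\ref{lem-AvoidDNR2}, and Lemma~\ref{lem-Low} respectively (the paper uses the explicit tree ${\sigma_s}^\smf K(b_{\max},s+1)^{<\max\{c(s),D_s\}}$ where you use a sufficiently tall full $k$-ary tree, an immaterial difference). No gaps.
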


\begin{proof}[Proof of Claim]
We have seen that $J$ is downward closed.  We need to show that $\forall s(s \in J \imp s+1 \in J)$.  So suppose $s \in J$.  By the previous claim, $\sigma_s$ forces $\geq\! K(b_{\max}, s)$-branching $f$-divergence for $D_s$.

\begin{itemize}
\item If $s+1 \equiv 0 \mod n+2$, then consider the tree ${\sigma_s}^\smf K(b_{\max}, s+1)^{<\max\{c(s),D_s\}}$.  It is $\geq\! K(b_{\max}, s)2^{|D_s|}$-branching with trunk $\sigma_s$, so by Lemma~\ref{lem-PreserveDivergence} it has a leaf that forces $\geq\! K(b_{\max}, s)$-branching $f$-divergence for $D_s$, and this leaf also forces $\geq\! K(b_{\max}, s+1)$-branching $f$-divergence for $D_s$.  Thus $\sigma_{s+1}$ and $D_{s+1}$ are defined.

\item If $s+1 \equiv i+1 \mod n+2$ for an $i < n$, then Lemma~\ref{lem-AvoidDNR2} applies.  The previous claim and the inequality $K(b_{\max},s)2^{|D_s|+b} < k$ show that the hypotheses of Lemma~\ref{lem-AvoidDNR2} are satisfied, where $K(b_{\max},s)2^{|D_s|+b} < k$ because
\begin{align*}
K(b_{\max},s)2^{|D_s|+b} \leq K(b_{\max},s)2^{s^2+b+1} = K(b_{\max},s+1) < K(b_{\max},k_0) = k,
\end{align*}
with the strict inequality holding because $s+1 \in I$ and therefore $s+1 < k_0$.  Thus $\sigma_{s+1}$ and $D_{s+1}$ are defined.

\item If $s+1 \equiv n+1 \mod n+2$, then Lemma~\ref{lem-Low} applies by an argument similar to the one in the previous item.  Thus $\sigma_{s+1}$ and $D_{s+1}$ are defined.  
\end{itemize}
\end{proof}

\begin{Claim}
The function $g$ is total.
\end{Claim}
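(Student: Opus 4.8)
The plan is to derive totality from the equality $J = I$, so the first step is to strengthen the inclusion $J \subseteq I$ proved above to $J = I$. Note first that $g = \bigcup_{s \in J}\sigma_s$ is a well-defined partial function with range contained in $k$: the construction yields a nested chain $\sigma_0 \subseteq \sigma_1 \subseteq \cdots$, since at stages $s+1 \equiv 0 \bmod n+2$ we explicitly take $\sigma_{s+1} \supseteq \sigma_s$, and at the remaining stages $\sigma_{s+1} = \sigma'$ is an extension of $\sigma_s$ by the conclusions of Lemma~\ref{lem-AvoidDNR2} and Lemma~\ref{lem-Low}. Thus $\dom(g) = \bigcup_{s \in J}|\sigma_s|$, and proving $g$ total amounts to showing that $\{|\sigma_s| : s \in J\}$ is cofinal in $\Nb$. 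At the stages $s+1 \equiv 0 \bmod n+2$ we arranged $|\sigma_{s+1}| > \max\{c(s), D_s\} \geq c(s)$, and since $c \colon I \imp \Nb$ is increasing and cofinal, it suffices to know that $J$ is cofinal in $I$, i.e., that $J = I$.

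For $I \subseteq J$ I would not try to induct on the $\Sigma^0_2(f)$ predicate ``$s \in J$'' --- this is where $\neg\ist$ bites --- but instead argue via a bounded least-element principle. Fix $a \in I$ and consider $F_a = \{s \leq a : s \notin J\}$. The predicate ``$s \notin J$'' is $\Pi^0_2(f)$, and $\rcasys + \bst$ proves bounded $\Sigma^0_2$ comprehension (the standard argument: $\bst$ uniformly bounds the existential witnesses across a bounded range of $\Sigma^0_2$ instances, reducing each such instance to a $\Delta^0_1$ condition on that range), so $F_a$ exists as a coded set. Were $F_a$ nonempty it would have a least element $s_0$; since $0 \in J$ we have $s_0 > 0$, minimality gives $s_0 - 1 \in J$, and hence $s_0 = (s_0 - 1) + 1 \in J$ by the preceding claim that $J$ is a cut, contradicting $s_0 \in F_a$. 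So $F_a = \emptyset$, i.e., $a \in J$. Since $a \in I$ was arbitrary, $I \subseteq J$, and therefore $J = I$.

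Finally I would conclude. Given $x \in \Nb$, use cofinality of $c$ to choose $y \in I$ with $c(y) > x$, and let $s$ be the least index $\geq y$ with $s \equiv n+1 \bmod n+2$, so $y \leq s \leq y + n + 1$. As $y \in I$, $n+1 \in \omega$, and $I$ is a cut, $s \in I = J$, hence $s+1 \in J$. Stage $s+1$ is $\equiv 0 \bmod n+2$, so the construction produced $\sigma_{s+1} \supseteq \sigma_s$ with $|\sigma_{s+1}| > \max\{c(s),D_s\} \geq c(s) \geq c(y) > x$ (using that $c$ is increasing and $s \geq y$), whence $x \in \dom(\sigma_{s+1}) \subseteq \dom(g)$. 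As $x$ was arbitrary, $g$ is total. The one genuinely delicate step is the equality $J = I$: the rest is bookkeeping, but in the absence of $\ist$ one must replace the naive ``prove $\forall s\,(s \in J)$ by induction'' with the bounded argument above, and this is precisely where $\bst$ is used beyond its roles inside Lemma~\ref{lem-AvoidDNR2} and Lemma~\ref{lem-Low}.
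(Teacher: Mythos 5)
There is a genuine gap, and it sits exactly where you flag the ``one genuinely delicate step.'' Your proof of $I \subseteq J$ forms the coded set $F_a = \{s \leq a : s \notin J\}$ and extracts its least element. Since ``$s \in J$'' is $\Sigma^0_2(f)$, forming $F_a$ is an instance of bounded $\Pi^0_2$ (equivalently $\Sigma^0_2$) comprehension, and that scheme is \emph{equivalent} to $\ist$ over $\rcasys$, not a consequence of $\bst$: given bounded $\Sigma^0_2$ comprehension one derives $\ist$ by precisely the argument you then run on $F_a$ (code the counterexamples below $a$, take the least one, contradict closure under successor). So if your step were available in $\rcasys+\bst$, then $\rcasys+\bst$ would prove $\ist$, contradicting Theorem~\ref{thm-InductionSummary}. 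The parenthetical justification does not repair this: $\bst$ bounds witnesses only under the hypothesis that \emph{all} instances below $a$ are true, which is the very thing you are trying to prove. The same obstruction blocks the variant that skips comprehension and directly invokes a least-element principle for the $\Pi^0_2$ predicate ``$s \notin J$.'' Moreover, the target $J = I$ is not something the construction guarantees: $J$ is only known to be a $\Sigma^0_2(f)$ cut contained in $I$, and a proper $\Sigma^0_2$ subcut of $I$ is perfectly consistent with $\bst$ (only $\Delta^0_2$ cuts are ruled out), so your final paragraph, which needs $\{c(s) : s \in J\}$ cofinal, does not go through.

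The paper's proof runs in the opposite direction and never needs $J = I$. Suppose $g$ is not total; then all the pairs $\la \sigma_s, D_s\ra$ for $s \in J$ are coded by numbers below some fixed $t$, so the existential quantifier over witnessing sequences in the definition of $J$ can be bounded by $\la t,t\ra^{s+1}$. This turns the $\Sigma^0_2(f)$ definition of $J$ into a $\Delta^0_2(f)$ one, and $J$ (already shown to be a proper cut, being a cut contained in $I$) would then be a $\Delta^0_2(f)$ proper cut, contradicting $\mathrm{I}\Delta^0_2$ relative to $f$, which does follow from $\bst$. In short: instead of trying to prove $J$ reaches every element of $I$, one shows that $J$'s failure to push $|\sigma_s|$ cofinally would make $J$ too simple to be a cut at all. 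You would need to replace your second and third paragraphs with an argument of this kind.
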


\begin{proof}[Proof of Claim]
If $g$ is not total, then there is a $t \in \Nb$ such that $(\forall s \in J)(|\sigma_s| < t \andd D_s < t)$.  By increasing $t$, we may assume that if $s \in J$ then $\sigma_s < t$ (i.e., $\sigma_s$ is coded by a number $< t$).  Thus $s \in J$ if and only if there is a sequence $(\la \sigma_j, D_j \ra : j \leq s) \leq \la t,t \ra^{s+1}$ (i.e., the sequence of $s+1$ copies of $\la t,t \ra$), where $\sigma_0 = \emptyset$, $D_0 = \la e_0, e_0 \ra$, and, for all $j<s$, $\la \sigma_{j+1}, D_{j+1} \ra$ follows from $\la \sigma_j, D_j \ra$ according to the rules of the construction.  This shows that $J$ is $\Delta^0_2(f)$, which is a contradiction because by $\bst$ there are no $\Delta^0_2(f)$ cuts.
\end{proof}

\begin{Claim}
The function $g$ is $\dnrstat(k,f)$.
\end{Claim}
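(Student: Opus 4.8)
The plan is to combine the two facts already established within the proof of Theorem~\ref{thm-DNRKwithoutDNR2}: that $g$ is total, and that for every $s \in J$ the string $\sigma_s$ forces $\geq\! K(b_{\max}, s)$-branching $f$-divergence for $D_s$. First, $g$ is $k$-bounded: since $g = \bigcup_{s \in J} \sigma_s$ and every $\sigma_s$ lies in $k^{<\Nb}$, we get $\ran(g) \subseteq k$. It remains to check that $g$ is $\dnrstat(f)$, i.e., that $g(e) \neq \Phi_e^f(e)$ for every $e$. For $e$ with $\Phi_e^f(e)\ua$ this is immediate, since $g$ is total and so $g(e)$ is a genuine number while $\Phi_e^f(e)$ is undefined; thus the real content is to rule out $\exists e(g(e) = \Phi_e^f(e))$.

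Suppose toward a contradiction that $g(e) = \Phi_e^f(e)$ for some $e$. By the choice of $e_0$ we then have $\Phi_{e_0}^{f \oplus g}(e_0)\da$. I would next extract a finite use: a convergent computation queries only finitely much of its oracle, so by the standard conventions on computations relative to partial oracles there is an $\ell$ with $\Phi_{e_0}^{f \oplus (g \restriction \ell)}(e_0)\da$. Because $g$ is total and the strings $\sigma_s$ ($s \in J$) form an increasing chain of initial segments of $g$, the lengths $|\sigma_s|$ are cofinal in $\Nb$, so there is an $s \in J$ with $|\sigma_s| \geq \ell$. Then $g \restriction \ell \subseteq \sigma_s$, and hence $\Phi_{e_0}^{f \oplus \sigma_s}(e_0)\da$.

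Now I would observe that the one-branch tree $T = \{\tau : \tau \subseteq \sigma_s\}$ has trunk $\sigma_s$, and its only leaf is $\sigma_s$; since it has no non-leaf node extending its trunk, it is vacuously $\geq\! K(b_{\max}, s)$-branching. As $\Phi_{e_0}^{f \oplus \sigma_s}(e_0)\da$, the tree $T$ witnesses that $\sigma_s$ admits $\geq\! K(b_{\max}, s)$-branching $f$-convergence for $\la e_0, e_0 \ra$. But $\la e_0, e_0 \ra \in D_0 \subseteq D_s$, since $D_0 = \{\la e_0, e_0 \ra\}$ and the construction only enlarges $D$; this contradicts the earlier claim that $\sigma_s$ forces $\geq\! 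K(b_{\max}, s)$-branching $f$-divergence for $D_s$. Hence no such $e$ exists, so $g$ is $\dnrstat(f)$, and therefore $g$ is $\dnrstat(k,f)$.

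I do not expect a genuine obstacle here: the statement reduces entirely to the two prior claims plus the bookkeeping that the $\sigma_s$ are nested initial segments of the total function $g$, that a convergent computation with finite use survives the passage from $g \restriction \ell$ to the longer string $\sigma_s$, and that a tree consisting of a single branch is vacuously highly branching. The only point worth stating explicitly is that $\la e_0, e_0 \ra$ persists in every $D_s$, which holds because elements are only ever added to $D$ and $\la e_0, e_0 \ra$ is placed in $D_0$.
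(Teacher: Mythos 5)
Your proof is correct and follows essentially the same route as the paper's: reduce to the convergence of $\Phi_{e_0}^{f\oplus\sigma_s}(e_0)$ at some stage $s$, and contradict the earlier claim that $\sigma_s$ forces $\geq\! K(b_{\max},s)$-branching $f$-divergence for $D_s \ni \la e_0,e_0\ra$. The only difference is that you spell out the finite-use argument and the observation that the single-branch tree with trunk $\sigma_s$ vacuously witnesses the admission of branching convergence, details the paper leaves implicit.
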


\begin{proof}[Proof of Claim]
The function $g$ has range contained in $k$ by the convention that all trees are subtrees of $k^{< \Nb}$.  Suppose for a contradiction that $\exists e (g(e) = \Phi_e^f(e))$.  Then $\Phi_{e_0}^{f \oplus g}(e_0)\da$, so there is an initial segment $\sigma \subset g$ such that $\Phi_{e_0}^{f \oplus \sigma}(e_0)\da$.  Let $s$ be a stage with $\sigma_s \supseteq \sigma$.  Then $\Phi_{e_0}^{f \oplus \sigma_s}(e_0)\da$, but this is a contradiction because $\la e_0, e_0 \ra \in D_s$ and $\sigma_s$ forces $\geq\! K(b_{\max}, s)$-branching $f$-divergence for $D_s$.
\end{proof}

\begin{Claim}
$f \oplus g$ is low relative to $f$.
\end{Claim}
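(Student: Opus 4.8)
The plan is to verify directly that the predicate $\Phi_e^{f\oplus g}(e)\da$ is equivalent, uniformly in $e$, to a $\Delta^0_2(f)$ statement, which is exactly what it means for $f\oplus g$ to be low relative to $f$; the lowness stages $s+1\equiv n+1\bmod n+2$ were inserted precisely to make this possible. Recall that at such a stage we applied Lemma~\ref{lem-Low} with $D=D_s$, $m=K(b_{\max},s)$, and $E=\{t:t\le|\sigma_s|\}$, obtaining $\sigma_{s+1}=\sigma'$ with the property that, setting $E'=\{e''\in E:\Phi_{e''}^{f\oplus\sigma'}(e'')\ua\}$ and letting $e'$ be the associated index from the lemma, which (applied to the oracle $f\oplus g$) satisfies $\Phi_{e'}^{f\oplus g}(e')\da\biimp(\exists e''\in E')(\Phi_{e''}^{f\oplus g}(e'')\da)$, the string $\sigma'$ forces $\geq\! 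K(b_{\max},s)2^{|D_s|}$-branching $f$-divergence for $D_s\cup\{\la e',e'\ra\}$, and this pair $\la e',e'\ra$ was put into $D_{s+1}$. Since the sets $D_t$ only grow along the construction, $\la e',e'\ra\in D_t$ for every $t\in J$ with $t\ge s+1$, so by the Claim that $\sigma_t$ forces $\geq\! K(b_{\max},t)$-branching $f$-divergence for $D_t$ whenever $t\in J$, each such $\sigma_t$ forces $\geq\! K(b_{\max},t)$-branching $f$-divergence for $\la e',e'\ra$.

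The crux is the following: \emph{for every $e$ and every $s$ with $s+1\in J$, $s+1\equiv n+1\bmod n+2$, and $e\le|\sigma_s|$, we have $\Phi_e^{f\oplus g}(e)\da$ iff $\Phi_e^{f\oplus\sigma_{s+1}}(e)\da$.} The implication ``$\Leftarrow$'' is clear because $\sigma_{s+1}\subset g$ and convergent computations persist under extension of the oracle. For ``$\Rightarrow$'' argue contrapositively: if $\Phi_e^{f\oplus\sigma_{s+1}}(e)\ua$ then $e\in E'$ at stage $s+1$ (as $e\in E$ since $e\le|\sigma_s|$), so by the property of $e'$ it suffices to prove $\Phi_{e'}^{f\oplus g}(e')\ua$. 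Were $\Phi_{e'}^{f\oplus g}(e')\da$, some $\rho\subset g$ would already satisfy $\Phi_{e'}^{f\oplus\rho}(e')\da$; choosing $t\in J$ with $t\ge s+1$ and $\sigma_t\supseteq\rho$ — possible because $g$ is total and $J$ is a cut — we would get $\Phi_{e'}^{f\oplus\sigma_t}(e')\da$, which, exactly as in the proof of the Claim that $g$ is $\dnrstat(k,f)$, contradicts that $\sigma_t$ forces $\geq\! K(b_{\max},t)$-branching $f$-divergence for $D_t\ni\la e',e'\ra$. Hence $\Phi_{e'}^{f\oplus g}(e')\ua$, so no $e''\in E'$ converges on its own index relative to $f\oplus g$; in particular $\Phi_e^{f\oplus g}(e)\ua$.

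Next observe that for each $e$ at least one such $s$ exists: since $g$ is total the lengths $|\sigma_s|$ with $s\in J$ are cofinal in $\Nb$, and since $J$ is a cut the stages $\equiv n+1\bmod n+2$ are cofinal in $J$, so one can find $s\in J$ with $s+1\in J$, $s+1\equiv n+1\bmod n+2$, and $e\le|\sigma_s|$. Consequently $\Phi_e^{f\oplus g}(e)\da$ is equivalent both to ``\emph{there exist} $s$ and an initial segment of the construction sequence witnessing that $s+1\in J$, with $s+1\equiv n+1\bmod n+2$, $e\le|\sigma_s|$, and $\Phi_e^{f\oplus\sigma_{s+1}}(e)\da$'' and to ``\emph{for all} $s$ and all such witnessing initial segments, $\Phi_e^{f\oplus\sigma_{s+1}}(e)\da$''. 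Since the construction sequence $(\la\sigma_j,D_j\ra:j\in J)$ is $\Delta^0_2(f)$ with uniquely determined values and ``$\Phi_e^{f\oplus\sigma_{s+1}}(e)\da$'' is $\Sigma^0_1(f)$ once $\sigma_{s+1}$ is supplied, the first formulation is $\Sigma^0_2(f)$ and the second is $\Pi^0_2(f)$. Hence $\Phi_e^{f\oplus g}(e)\da$ is equivalent to a $\Delta^0_2(f)$ statement, i.e., $f\oplus g$ is low relative to $f$.

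The step I expect to be the main obstacle is the crux above: confirming that the single pair $\la e',e'\ra$, which must be committed to $D_{s+1}$ before we know which members of $E'$ will eventually halt, faithfully records the halting behaviour of \emph{all} of $E'$ with respect to the \emph{final} oracle $f\oplus g$. The permanent-divergence invariant maintained by the $D_t$'s — via Lemma~\ref{lem-Low} together with the one-string argument reused from the $\dnrstat(k,f)$ claim — is exactly what makes this go through; the cofinality of the lowness stages in $J$ and the complexity bookkeeping for the $\Delta^0_2(f)$ construction data are then routine.
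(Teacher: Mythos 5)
Your proof is correct and follows essentially the same route as the paper's: use the lowness stages, reduce the question of whether $\Phi_e^{f\oplus g}(e)\da$ to whether $\Phi_e^{f\oplus\sigma_{s+1}}(e)\da$ at a stage $s+1\equiv n+1\bmod n+2$ with $|\sigma_s|>e$, and rule out a later convergence via the pair $\la e',e'\ra$ placed in $D_{s+1}$ together with the invariant that $\sigma_r$ forces branching $f$-divergence for $D_r$. Your version merely argues the key implication contrapositively and spells out the $\Sigma^0_2(f)$/$\Pi^0_2(f)$ bookkeeping that the paper leaves implicit.
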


\begin{proof}[Proof of Claim]
To determine whether or not $\Phi^{f \oplus g}_e(e)\da$, run the construction to a stage $s$ with $s+1 \equiv n+1 \mod n+2$ and $|\sigma_s| > e$.  Then $\Phi_e^{f \oplus g}(e)\da$ if and only if $\Phi_e^{f \oplus \sigma_{s+1}}(e)\da$.  Since the sequence $(\la \sigma_s, D_s \ra : s \in J)$ is $\Delta^0_2(f)$, it then follows that $f \oplus g$ is low relative to $f$.  Clearly $(\Phi_e^{f \oplus \sigma_{s+1}}(e)\da) \imp (\Phi_e^{f \oplus g}(e)\da)$.  To see $(\Phi_e^{f \oplus \sigma_{s+1}}(e)\ua) \imp (\Phi_e^{f \oplus g}(e)\ua)$, suppose for a contradiction that $\Phi_e^{f \oplus \sigma_{s+1}}(e)\ua$ but $\Phi_e^{f \oplus g}(e)\da$.  Let $\la e', e' \ra$ be the element added to $D_{s+1}$ at stage $s+1$.  Then $\Phi_{e'}^{f \oplus g}(e')\da$ because $e$ is in $E'$ (where $E'$ is as in Lemma~\ref{lem-Low}) and $\Phi_e^{f \oplus g}(e)\da$.  Let $r > s$ be a stage such that $\Phi_{e'}^{f \oplus \sigma_r}(e')\da$.  Clearly $\sigma_r$ admits $\geq\! K(b_{\max}, r)$-branching $f$-convergence for $\la e', e' \ra$, but this contradicts that $\la e', e' \ra \in D_r$ and $\sigma_r$ forces $\geq\! K(b_{\max}, r)$-branching $f$-divergence for $D_r$.
\end{proof}

\begin{Claim}
For each $i < n$, the function $f \oplus g$ computes no $\dnrstat(b_i, h_i)$ function.
\end{Claim}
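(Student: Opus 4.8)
The plan is to argue by contradiction, exploiting that the stages $s+1\equiv i+1\bmod n+2$ were run with $E=\{d(w_i,|\sigma_t|,b_i):t\leq s\}$, so that every index of the form $d(w_i,|\sigma_t|,b_i)$ is eventually ``handled'' in the sense of clause~(iii) of Lemma~\ref{lem-AvoidDNR2}. So suppose some $\Phi_{a_0}^{f\oplus g}$ is $\dnrstat(b_i,h_i)$. Since $g$ is total (by the relevant Claim, which rests on $\bst$), the lengths $|\sigma_s|$ for $s\in J$ are cofinal in $\Nb$, so I can fix $t\in J$ with $|\sigma_t|>a_0$ and set $e^*=d(w_i,|\sigma_t|,b_i)$. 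Because $a_0<|\sigma_t|$ and $\Phi_{a_0}^{f\oplus g}$ is $\dnrstat(b_i,h_i)$, Lemma~\ref{lem-DNRblock} (with $e=w_i$, $a=|\sigma_t|$, $b=b_i$, $h=h_i$) tells me that $\Phi_{e^*}^{f\oplus g}$ is total, $b_i$-bounded, and eventually $\dnrstat(b_i,h_i)$; fix $N$ with $(\forall\ell>N)(\Phi_{e^*}^{f\oplus g}(\ell)\neq\Phi_\ell^{h_i}(\ell))$.

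Next I would use that $J$ is a cut (hence meets each residue class mod $n+2$ cofinally) together with the cofinality of the $|\sigma_s|$ to choose $s\in J$ with $s\geq t$, $s+1\equiv i+1\bmod n+2$, and $|\sigma_s|>N$. Then $e^*\in E$ at stage $s+1$, so clause~(iii) of Lemma~\ref{lem-AvoidDNR2} gives one of three alternatives about $\la\sigma_{s+1},D_{s+1}\ra$, and I will rule out each. The first two are quick: since $s+1\in J$ we have $\sigma_{s+1}\subset g$, so any convergent $\Phi_{e^*}^{f\oplus\sigma_{s+1}}(\ell)$ equals $\Phi_{e^*}^{f\oplus g}(\ell)$; thus $(\exists\ell>|\sigma_s|)(\Phi_{e^*}^{f\oplus\sigma_{s+1}}(\ell)\geq b_i)$ contradicts $b_i$-boundedness of $\Phi_{e^*}^{f\oplus g}$, and $(\exists\ell>|\sigma_s|)(\Phi_{e^*}^{f\oplus\sigma_{s+1}}(\ell)=\Phi_\ell^{h_i}(\ell))$ contradicts the choice of $N$ because $\ell>|\sigma_s|>N$.

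The remaining alternative, $\la e^*,\ell\ra\in D_{s+1}$ for some $\ell$, is the main obstacle. Here I would use that the sets $D_s$ are nondecreasing along the construction and that each $\sigma_r$ with $r\in J$ forces $\geq\! K(b_{\max},r)$-branching $f$-divergence for $D_r$ (the earlier Claim); hence $\sigma_r$ forces $\geq\! K(b_{\max},r)$-branching $f$-divergence for $\la e^*,\ell\ra$ for every $r\in J$ with $r\geq s+1$. On the other hand $\Phi_{e^*}^{f\oplus g}(\ell)\da$ by totality, so some initial segment $\sigma\subset g$ already witnesses $\Phi_{e^*}^{f\oplus\sigma}(\ell)\da$; choosing $r\in J$ with $r\geq s+1$ and $\sigma_r\supseteq\sigma$ (possible since the $\sigma_r$ exhaust $g$), the tree obtained by extending $\sigma_r$ by every element of $\{0,\dots,k-1\}$ is $\geq\! k$-branching with trunk $\sigma_r$ and has $\Phi_{e^*}^{f\oplus\alpha}(\ell)\da$ at each leaf $\alpha$ (the halted computation persists under extensions of the oracle), and it is in particular $\geq\! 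K(b_{\max},r)$-branching because $r<k_0$ gives $K(b_{\max},r)\leq K(b_{\max},k_0)=k$. So $\sigma_r$ admits $\geq\! K(b_{\max},r)$-branching $f$-convergence for $\la e^*,\ell\ra$, a contradiction. As all three alternatives fail, there is no such $a_0$, which also completes the proof of Theorem~\ref{thm-DNRKwithoutDNR2}.
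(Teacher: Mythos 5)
Your argument is correct and follows the paper's proof essentially verbatim: apply Lemma~\ref{lem-DNRblock} to conclude that $\Phi_{d(w_i,|\sigma_t|,b_i)}^{f\oplus g}$ is eventually $\dnrstat(b_i,h_i)$, then invoke clause~(iii) of Lemma~\ref{lem-AvoidDNR2} at a late stage $s+1\equiv i+1\bmod n+2$ and refute each of the three alternatives, the third by noting that a convergent $\Phi_{e^*}^{f\oplus\sigma_r}(\ell)$ would contradict the fact that $\sigma_r$ forces $\geq\! K(b_{\max},r)$-branching $f$-divergence for $D_r\ni\la e^*,\ell\ra$. The one point you elide is that applying Lemma~\ref{lem-DNRblock} with oracle $f\oplus g$ (in particular, obtaining your bound $N$) requires $\bso$ relative to $f\oplus g$, which is supplied not by the totality claim but by the preceding lowness claim together with Lemma~\ref{lem-LowBST}.
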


\begin{proof}[Proof of Claim]
Suppose for a contradiction that $\Phi_e^{f \oplus g}$ is $\dnrstat(b_i, h_i)$.  Fix a stage $t$ such that $|\sigma_t| > e$.  By the previous claim, $\bst$ and hence $\iso$ holds relative to $f \oplus g$.  Thus Lemma~\ref{lem-DNRblock} applies to $f \oplus g$, so $\Phi_{d(w_i, |\sigma_t|, b_i)}^{f \oplus g}$ is eventually $\dnrstat(b_i, h_i)$.  We show that in fact $\Phi_{d(w_i, |\sigma_t|, b_i)}^{f \oplus g}$ is not eventually $\dnrstat(b_i, h_i)$, giving the contradiction.  Fix $\ell_0$.  We want to find an $\ell > \ell_0$ such that $\Phi_{d(w_i, |\sigma_t|, b_i)}^{f \oplus g}(\ell) \geq b_i$ or $\Phi_{d(w_i, |\sigma_t|, b_i)}^{f \oplus g}(\ell) = \Phi_{\ell}^{h_i}(\ell)$.  Let $s+1 > t$ be a stage with $s+1 \equiv i+1 \mod n+2$ and $|\sigma_s| > \ell_0$.  At stage $s+1$, $\sigma_{s+1}$ and $D_{s+1}$ are defined to be as in the conclusion of Lemma~\ref{lem-AvoidDNR2} for an $E$ with $d(w_i, |\sigma_t|, b_i) \in E$.  The result is that $(\exists \ell > |\sigma_s|)(\Phi_{d(w_i, |\sigma_t|, b_i)}^{f \oplus \sigma_{s+1}}(\ell) \geq b_i)$, $(\exists \ell > |\sigma_s|)(\Phi_{d(w_i, |\sigma_t|, b_i)}^{f \oplus \sigma_{s+1}}(\ell) = \Phi_\ell^{h_i}(\ell))$, or $\exists \ell(\la d(w_i, |\sigma_t|, b_i), \ell \ra \in D_{s+1})$.  If either of the first two alternatives hold, then we have our $\ell > \ell_0$ such that $\Phi_{d(w_i, |\sigma_t|, b_i)}^{f \oplus g}(\ell) \geq b_i$ or $\Phi_{d(w_i, |\sigma_t|, b_i)}^{f \oplus g}(\ell) = \Phi_{\ell}^{h_i}(\ell)$.  If the third alternative holds, then $\Phi_{d(w_i, |\sigma_t|, b_i)}^{f \oplus g}(\ell)\ua$, again contradicting that $\Phi_{d(w_i, |\sigma_t|, b_i)}^{f \oplus g}$ is eventually $\dnrstat(b_i,h_i)$.  To see that $\Phi_{d(w_i, |\sigma_t|, b_i)}^{f \oplus g}(\ell)\ua$, suppose instead that $\Phi_{d(w_i, |\sigma_t|, b_i)}^{f \oplus g}(\ell)\da$ and let $r > s+1$ be a stage such that $\Phi_{d(w_i, |\sigma_t|, b_i)}^{f \oplus \sigma_r}(\ell)\da$.  Clearly $\sigma_r$ admits $\geq\! K(b_{\max}, r)$-branching $f$-convergence for $\la d(w_i, |\sigma_t|, b_i), \ell \ra$, but this contradicts that $\la d(w_i, |\sigma_t|, b_i), \ell \ra \in D_r$ and $\sigma_r$ forces $\geq\! K(b_{\max}, r)$-branching $f$-divergence for $D_r$.
\end{proof}

This concludes the proof of Theorem~\ref{thm-DNRKwithoutDNR2}.
\end{proof}

\begin{Theorem}\label{thm-noDNRK}
$\rcasys + \bst + \forall f \exists k \dnrstat(k,f) \nvdash \exists k \forall f \dnrstat(k,f)$.
\end{Theorem}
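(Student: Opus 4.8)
To prove the non-implication I would construct a countable model of $\rcasys + \bst + \forall f \exists k\,\dnrstat(k,f)$ in which $\exists k\,\forall f\,\dnrstat(k,f)$ fails, obtained as the union of an $\omega$-chain of countable models built by iterating Theorem~\ref{thm-DNRKwithoutDNR2}. Fix a countable first-order model $\Nb$ of $\bst + \neg\ist$; by Lemma~\ref{lem-cut} fix once and for all a proper $\Sigma^0_2$ cut $I$ of $\Nb$ and an element $k_0 \in \Nb$ above $I$, and fix an enumeration $\Nb = \{\kappa_j : j \in \omega\}$ of the first-order universe. I will build an increasing sequence $\emptyset = G_0 \leqT G_1 \leqT G_2 \leqT \cdots$ so that each $M_s = (\Nb, \MP{S}_s)$ with $\MP{S}_s = \{X : X \leqT G_s\}$ is a model of $\rcasys + \bst$ (hence also of $\neg\ist$, since the first-order part is the fixed $\Nb$); note $M_0$ is just the recursive sets of $\Nb$, which models $\rcasys+\bst$ because $\Nb \models \bst$. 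Finally I set $M_\omega = (\Nb, \MP{S}_\omega)$ with $\MP{S}_\omega = \bigcup_s \MP{S}_s$.

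The construction maintains, for each $j \leq s$, the invariant $(\ast^s_j)$: $G_s$ computes no $\dnrstat(\kappa_j, G_j)$ function. The key observation is that $(\ast^s_s)$ is automatic, since no function computes a function diagonally non-recursive relative to itself. At stage $s$, apply Theorem~\ref{thm-DNRKwithoutDNR2} to $M_s$ with $f = G_s$, with the $n = s+1$ pairs $(h_j, b_j) = (G_j, \kappa_j)$ for $j \leq s$, and with the fixed $I$ and $k_0$; the hypotheses $h_j \leqT f$ and ``$f$ computes no $\dnrstat(b_j, h_j)$ function'' are exactly $G_j \leqT G_s$ and $(\ast^s_j)$. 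This produces a function $g_s$ that is $\dnrstat(k_s, G_s)$ for some $k_s = K(b_{\max}, k_0) \in \Nb$, such that $G_s \oplus g_s$ is low relative to $G_s$ and computes no $\dnrstat(\kappa_j, G_j)$ function for any $j \leq s$. Set $G_{s+1} = G_s \oplus g_s$. Then $M_{s+1} \models \rcasys + \bst$ by Lemma~\ref{lem-LowBST} together with the remark that lowness of $G_s \oplus g_s$ over $G_s$ upgrades to lowness over every $Y \in \MP{S}_s$ (via $(Y \oplus g_s)' \leqT Y' \oplus (G_s \oplus g_s)' \leqT Y'$ whenever $G_s \leqT Y$), and the invariants $(\ast^{s+1}_j)$ for $j \leq s$ hold by the conclusion of the theorem while $(\ast^{s+1}_{s+1})$ is again automatic.

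It then remains to check three facts about $M_\omega$. First, $M_\omega \models \rcasys + \bst$, because it is a union of a chain of models of $\rcasys + \bst$ all with first-order part $\Nb$, and each instance of $\Delta^0_1$ comprehension, of $\iso$, and of $\bst$ relevant to $M_\omega$ mentions only parameters lying below a single $G_s$ and is inherited from $M_s$. Second, $M_\omega \models \forall f \exists k\,\dnrstat(k,f)$: given $f \in \MP{S}_\omega$, fix $s$ with $f \leqT G_s$, say $f = \Phi^{G_s}_p$; then $g_s$ is $\dnrstat(k_s, G_s)$, and a routine use of the recursion theorem, formalizable in $\rcasys$, converts $g_s$ into a $k_s$-bounded function $g'_s \leqT g_s$ with $g'_s(x) = g_s(r(x))$, where $r$ is computable (from $p$) with $\Phi^{G_s}_{r(x)}(r(x)) = \Phi^f_x(x)$, so that $g'_s$ is $\dnrstat(k_s,f)$ and lies in $\MP{S}_\omega$. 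Third, $M_\omega$ does not satisfy $\exists k\,\forall f\,\dnrstat(k,f)$: if some $\kappa = \kappa_j \in \Nb$ witnessed $\forall f\,\dnrstat(\kappa,f)$ in $M_\omega$, then instantiating at $f = G_j \in \MP{S}_\omega$ would give some $g \in \MP{S}_\omega$ that is $\dnrstat(\kappa_j, G_j)$; fixing $s$ with $g \leqT G_s$ forces $s \geq j$ (otherwise $g \leqT G_j$ would be $\dnrstat(G_j)$), contradicting $(\ast^s_j)$.

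The crux is that, once Theorem~\ref{thm-DNRKwithoutDNR2} is in hand, the whole argument is a careful iteration of it: the only requirements to enforce at stage $s$ are the standardly many invariants $(\ast^s_j)$ with $j \leq s$, so the tuple fed to Theorem~\ref{thm-DNRKwithoutDNR2} always has length $n = s+1 \in \omega$; the base cases $(\ast^s_s)$ come for free from the defining diagonalization of a $\dnrstat$ function; and no positive step is needed separately, since the $g_s$ produced at each stage already witnesses $\dnrstat(k_s,G_s)$ and hence, by the downward transfer of $\dnrstat(k,\cdot)$ under $\leqT$, witnesses $\dnrstat(k_s,f)$ for every $f \leqT G_s$. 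The points still requiring care are: that the single $\Sigma^0_2$ cut $I$ and single $k_0$ extracted from the fixed $\Nb$ genuinely serve at every stage (they do, as Lemma~\ref{lem-cut} depends only on the first-order part); that adding $g_s$ preserves $\bst$, via the lowness clause of Theorem~\ref{thm-DNRKwithoutDNR2} and Lemma~\ref{lem-LowBST}; and that the recursion-theorem transfer of $\dnrstat$-ness along $\leqT$ is correctly formalized in $\rcasys$, which is exactly Jockusch's basic observation underlying the downward closure of the $\dnrstat(k,f)$ statements.
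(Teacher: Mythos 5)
Your proposal is correct and follows essentially the same route as the paper: iterate Theorem~\ref{thm-DNRKwithoutDNR2} along an $\omega$-chain over a fixed countable $\Nb \models \bst + \neg\ist$ with a fixed cut $I$ and bound $k_0$, using the lowness clause plus Lemma~\ref{lem-LowBST} to preserve $\bst$, downward transfer of $\dnrstat(k,\cdot)$ along $\leqT$ for the positive requirement, and the recursion-theorem fact that no oracle computes a function diagonally non-recursive relative to itself for the new requirement introduced at each stage. The only cosmetic difference is that you diagonalize against an enumeration $(\kappa_j)$ of all of $\Nb$ where the paper uses an increasing cofinal sequence $(b_m)$ together with the monotonicity of $k$-boundedness; both dispositions of the bounds work equally well.
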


\begin{proof}
We build a model of $\rcasys + \bst + \forall f \exists k \dnrstat(k,f) + \neg\exists k \forall f \dnrstat(k,f)$ by iterating Theorem~\ref{thm-DNRKwithoutDNR2}.

Let $\Nb$ be a countable first-order model of $\bst + \neg\ist$.  By Lemma~\ref{lem-cut}, let $I$ be a proper $\Sigma^0_2$ cut in $\Nb$.  Fix $k_0 \in \Nb$ such that $(\forall i \in I)(k_0 > i)$.

Fix an increasing, cofinal sequence $(b_m : m \in \omega)$ of numbers in $\Nb$.  We define a sequence $(f_m : m \in \omega)$ of functions $\Nb \imp \Nb$ such that, for all $m \in \omega$,
\begin{itemize}
\item[(i)] $f_m \leqT f_{m+1}$;

\item[(ii)] $(\Nb, \Delta^0_1(f_m)) \models \rcasys + \bst$;

\item[(iii)] no $h \leqT f_m$ is $\dnrstat(b_{m_0},f_{m_0})$ for any $m_0 \leq m$;

\item[(iv)] for every $h \leqT f_m$, there are a $k \in \Nb$ and a $g \leqT f_{m+1}$ that is $\dnrstat(k,h)$.
\end{itemize}
Let $f_0 = 0$.  The function $f_0$ is $\Delta^0_1$, so items~(ii) and~(iii) hold for $m=0$, with item~(ii) holding because $\Nb \models \bst$.  Suppose now that $(f_j : j < m+1)$ satisfies items~(i) and~(iv) for all $j < m$ and satisfies items~(ii) and~(iii) for all $j < m+1$.  Then $M = (\Nb, \Delta^0_1(f_m))$, $I$, $n = m+1$, $f = f_m$, $\vec h = (f_j : j < m+1)$, $\vec b = (b_j : j < m+1)$, and $k = K(b_{\max}, k_0)$ satisfy the hypotheses of Theorem~\ref{thm-DNRKwithoutDNR2}.  Thus let $g$ be as in the conclusion of Theorem~\ref{thm-DNRKwithoutDNR2}, and let $f_{m+1} = f_m \oplus g$.  Then item~(i) holds for $m$ and items~(ii) and~(iii) hold for $m+1$, with item~(ii) holding because $f_{m+1}$ is low relative to $f_m$.  Item~(iv) holds for $m$ because $g \leqT f_{m+1}$ is $\dnrstat(k,f_m)$ and hence computes a $\dnrstat(k,h)$ function for every $h \leqT f_m$.

Let $\MP S = \bigcup_{m \in \omega}\Delta^0_1(f_m)$.  Then $(\Nb, \MP S)$ models $\rcasys + \bst + \forall f \exists k \dnrstat(k,f) + \neg\exists k \forall f \dnrstat(k,f)$.  $\bst$ holds relative to every $h \in \MP S$ by item~(ii), so $(\Nb, \MP S) \models \rcasys + \bst$.  We have that $(\Nb, \MP S) \models \forall f \exists k \dnrstat(k,f)$ by item~(iv).  To see that $(\Nb, \MP S) \not\models \exists k \forall f \dnrstat(k,f)$, let $k \in \Nb$ and let $b_{m_0} > k$.  Then observe that no $h \in \MP S$ is $\dnrstat(b_{m_0}, f_{m_0})$ (hence no $h \in \MP S$ is $\dnrstat(k, f_{m_0})$) by item~(iii).
\end{proof}

\begin{Theorem}\label{thm-noWKL}
$\rcasys + \bst + \exists k \forall f \dnrstat(k,f) \nvdash \wklstat$.
\end{Theorem}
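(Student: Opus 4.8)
The plan is to build a model of $\rcasys + \bst + \exists k \forall f \dnrstat(k,f) + \neg\wklstat$ by iterating Theorem~\ref{thm-DNRKwithoutDNR2}, in close analogy with the proof of Theorem~\ref{thm-noDNRK}. The essential new point is that, unlike in Theorem~\ref{thm-noDNRK} where the bounds had to grow (so that only $\forall f \exists k \dnrstat(k,f)$ survived), here we keep a \emph{single} avoidance requirement for the whole iteration: we arrange that nothing in the model is a $\dnrstat(2,\emptyset)$ function. Since the relevant bound is then always $b_{\max}=2$, the number $k = K(2,k_0)$ produced by Theorem~\ref{thm-DNRKwithoutDNR2} is the same at every stage, and this fixed $k$ will witness $\exists k \forall f \dnrstat(k,f)$.

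Concretely, I would start with a countable $\Nb \models \bst + \neg\ist$, take a proper $\Sigma^0_2$ cut $I$ from Lemma~\ref{lem-cut}, fix $k_0 \in \Nb$ above all of $I$, and set $k = K(2,k_0)$ (legitimate since $\bst \vdash \iso$ and $\iso$ proves the totality of exponentiation). Then I would recursively build functions $f_m \colon \Nb \imp \Nb$ ($m \in \omega$) so that for every $m$:
\begin{itemize}
\item[(i)] $f_m \leqT f_{m+1}$;
\item[(ii)] $(\Nb, \Delta^0_1(f_m)) \models \rcasys + \bst$;
\item[(iii)] $f_m$ computes no $\dnrstat(2,\emptyset)$ function;
\item[(iv)] some $g \leqT f_{m+1}$ is $\dnrstat(k,f_m)$.
\end{itemize}
Take $f_0 = \emptyset$: then (ii) holds at $0$ because $\Nb \models \bst$, and (iii) holds at $0$ by the usual diagonal argument, which is available in $\rcasys$: if a recursive $g$ were $\dnrstat(2,\emptyset)$, then writing $g = \Phi_e^\emptyset$ gives $g(e) = \Phi_e^\emptyset(e)$, a contradiction. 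Given $f_m$ satisfying (ii) and (iii), I would apply Theorem~\ref{thm-DNRKwithoutDNR2} with $M = (\Nb, \Delta^0_1(f_m))$, the same $I$ and $k_0$, $n = 1$, $f = f_m$, $h_0 = \emptyset$, and $b_0 = 2$; this yields a $\dnrstat(k,f_m)$ function $g$ with $f_m \oplus g$ low relative to $f_m$ and $f_m \oplus g$ computing no $\dnrstat(2,\emptyset)$ function. Setting $f_{m+1} = f_m \oplus g$, items (i) and (iv) hold at $m$, (iii) holds at $m+1$, and (ii) holds at $m+1$ by Lemma~\ref{lem-LowBST}.

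Finally I would let $\MP S = \bigcup_{m} \Delta^0_1(f_m)$ and check that $(\Nb, \MP S)$ is the desired model. Item (ii) gives $(\Nb, \MP S) \models \rcasys + \bst$. For $\exists k \forall f \dnrstat(k,f)$: any $f \in \MP S$ is $\leqT f_m$ for some $m$, and the $\dnrstat(k,f_m)$ function $g \leqT f_{m+1}$ from (iv) computes, by the standard index-translation argument (which changes no output values and so preserves $k$-boundedness), a $\dnrstat(k,f)$ function that is still $\leqT f_{m+1} \in \MP S$. For $\neg\wklstat$: by (iii) no $h \in \MP S$ is $\dnrstat(2,\emptyset)$, so $(\Nb, \MP S) \not\models \dnrstat(2,\emptyset)$; since $\rcasys \vdash (\wklstat \imp \forall f \dnrstat(2,f))$, this forces $(\Nb, \MP S) \not\models \wklstat$. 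All of the genuine work is inside Theorem~\ref{thm-DNRKwithoutDNR2}; the only thing one must be careful about here is the bookkeeping that a single $k$ can be threaded through the entire iteration, which is precisely why the avoidance requirement must be stated against the fixed oracle $\emptyset$ rather than against the growing functions $f_m$.
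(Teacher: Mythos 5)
Your proposal is correct and follows essentially the same route as the paper: the paper's proof of Theorem~\ref{thm-noWKL} is exactly the iteration of Theorem~\ref{thm-DNRKwithoutDNR2} with the fixed parameters $n=1$, $\vec h = (0)$, $\vec b = (2)$, and $k = K(2,k_0)$, maintaining the single avoidance requirement that nothing in the model is $\dnrstat(2,0)$ and concluding via the equivalence of $\wklstat$ with $\forall f\,\dnrstat(2,f)$ over $\rcasys$. The bookkeeping points you flag (a single $k$ threaded through the iteration, the index-translation step for relativizing $\dnrstat(k,f_m)$ to arbitrary $h \leqT f_m$) match the paper's treatment.
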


\begin{proof}
The proof is a simplification of the proof of Theorem~\ref{thm-noDNRK}.  We build a model of $\rcasys + \bst + \exists k \forall f \dnrstat(k,f) + \neg\forall f \dnrstat(2,f)$ by iterating Theorem~\ref{thm-DNRKwithoutDNR2}.  As $\forall f \dnrstat(2,f)$ and $\wklstat$ are equivalent over $\rcasys$, this also a model of $\rcasys + \bst + \exists k \forall f \dnrstat(k,f) + \neg \wklstat$.

Proceed as in the proof of Theorem~\ref{thm-noDNRK}, but fix $k = K(2,k_0)$ and ignore the sequence $(b_m : m \in \omega)$.  Define a sequence $(f_m : m \in \omega)$ of functions $\Nb \imp \Nb$ that satisfy items~(i) and~(ii) as before and satisfy the following modified versions of items~(iii) and~(iv):
\begin{itemize}
\item[(iii')] no $h \leqT f_m$ is $\dnrstat(2,0)$;

\item[(iv')] for every $h \leqT f_m$, there is a $g \leqT f_{m+1}$ that is $\dnrstat(k,h)$.
\end{itemize}
Now $f_{m+1}$ is obtained from $f_m$ by applying Theorem~\ref{thm-DNRKwithoutDNR2} to $M = (\Nb, \Delta^0_1(f_m))$, $I$, $n=1$, $f = f_m$, $\vec h = (0)$, $\vec b = (2)$, and $k$.  The witnessing model $(\Nb, \MP S)$ is built from $(f_m : m \in \omega)$ as before.
\end{proof}

Now that we know that the statements $\exists k \forall f \dnrstat(k,f)$ and $\forall f \exists k \dnrstat(k,f)$ do not imply $\wklstat$ even over $\rcasys + \bst$, it is natural to ask if either statement implies weak weak K\"onig's lemma.

\begin{Question}
Do either $\exists k \forall f \dnrstat(k,f)$ or $\forall f \exists k \dnrstat(k,f)$ imply $\wwklstat$ over $\rcasys$ (or over $\rcasys + \bst$)?
\end{Question}

\section{Observations concerning the connection between diagonally non-recursive functions and graph colorings}

Just as $\dnrstat(\ell,f)$ trivially implies $\dnrstat(k,f)$ when $k \geq \ell$, so the existence of an $\ell$-coloring of a graph trivially implies the existence of a $k$-coloring of that graph when $k \geq \ell$.  This motivates the search for a connection between $\dnrstat$ functions and graph colorings.  So far, our efforts in this area have produced more questions than answers.

\begin{Definition}[$\rcasys$]
A \emph{graph} $G = (V,E)$ consists of a set of vertices $V \subseteq \Nb$ and an irreflexive, symmetric relation $E \subseteq V \times V$ which indicates when two vertices are adjacent.  Let $G$ be a graph, and let $\ell \in \Nb$.
\begin{itemize}
\item An \emph{$\ell$-coloring} of $G$ is a function $\chi \colon V \imp \ell$ such that $(\forall u,v \in V)((u,v) \in E \imp \chi(u) \neq \chi(v))$.

\item $G$ is (\emph{globally}) \emph{$\ell$-colorable} if there is an $\ell$-coloring of $G$.

\item $G$ is \emph{locally $\ell$-colorable} if for every finite $V_0 \subseteq V$, the induced subgraph $(V_0, E \cap (V_0 \times V_0))$ is $\ell$-colorable.
\end{itemize}
\end{Definition}

Let $\colstat(\ell,k,G)$ denote the formal statement that ``if the graph $G$ locally $\ell$-colorable, then $G$ is globally $k$-colorable.'' A classic compactness argument shows that a graph is $\ell$-colorable if and only if it is locally $\ell$-colorable. In the context of reverse mathematics, the following theorem expresses that this fact is equivalent to $\wklstat$ over $\rcasys$.

\begin{Theorem}[see~\cite{Hirst:1987}~Theorem 3.4]\label{thm-h}
\begin{align*}
\rcasys \vdash (\forall \ell \geq 2)(\wklstat \biimp \forall G\,\colstat(\ell,\ell,G))).
\end{align*}
\end{Theorem}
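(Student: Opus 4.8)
The plan is to prove both directions of the biconditional with $\ell \in \Nb$, $\ell \geq 2$ fixed but arbitrary, working throughout in $\rcasys$.

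\emph{The implication $\wklstat \imp \forall G\,\colstat(\ell,\ell,G)$ (compactness).} Suppose $G=(V,E)$ is locally $\ell$-colorable, with $V \subseteq \Nb$. I would form the tree $T \subseteq \ell^{<\Nb}$ of partial colorings: put $\sigma \in T$ iff $(\forall u,v \in V)(u,v<|\sigma| \andd (u,v)\in E \imp \sigma(u)\neq\sigma(v))$. This condition is bounded, so $T$ exists (it is primitive recursive in $V\oplus E$) and is plainly a tree. For each $n$, local $\ell$-colorability supplies an $\ell$-coloring of the induced subgraph on $V\cap\{0,\dots,n-1\}$; padding it with $0$'s off $V$ produces a string of $T$ of length $n$, so $T$ is infinite. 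Over $\rcasys$, $\wklstat$ implies that every infinite bounded subtree of $\Nb^{<\Nb}$ has a path (bounded K\"onig's lemma; alternatively, replace each $i<\ell$ by a fixed binary block and apply $\wklstat$ to the resulting subtree of $2^{<\Nb}$). Let $\chi$ be such a path. By $\Delta^0_1$ comprehension $\chi\restriction V$ is a set, and for any $(u,v)\in E$, picking $n>\max\{u,v\}$ gives $\chi\restriction n\in T$ and hence $\chi(u)\neq\chi(v)$. Thus $\chi\restriction V$ is an $\ell$-coloring of $G$.

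\emph{The implication $\forall G\,\colstat(\ell,\ell,G) \imp \wklstat$ (coding).} Here I would reduce to the $\Sigma^0_1$ separation principle, which is equivalent to $\wklstat$ over $\rcasys$: given $\Sigma^0_1$ sets $A,B$ with $A\cap B=\emptyset$, I build a graph $G$ each of whose $\ell$-colorings computes a separator of $A$ and $B$. Fix a shared clique $K_{\ell-2}$ on vertices $d_2,\dots,d_{\ell-1}$ (empty when $\ell=2$); in any $\ell$-coloring $\chi$ of $G$ these get $\ell-2$ distinct colors, and every vertex joined to all of them is forced into the two-element set $L_\chi$ of remaining colors. Introduce a reference vertex $r$ and, for each $n$, a vertex $v_n$, all joined to the $d_i$. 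For $n\in B$ attach a ``remote inequality gadget'' forcing $\chi(v_n)\neq\chi(r)$, and for $n\in A$ a gadget forcing $\chi(v_n)=\chi(r)$ (obtained from two remote inequality gadgets through an auxiliary vertex $u_n$, using $|L_\chi|=2$). A remote inequality gadget is simply a path whose interior vertices are all joined to the clique, so that—for an appropriate parity of its length—equality or inequality of colors within $L_\chi$ is transmitted over an arbitrarily long distance (a bare path imposes nothing once $\ell\geq 3$, which is why the clique is needed). Crucially, the gadget for $n$ is switched on using the \emph{enumeration} of the relevant set: the edge at position $j$ of the $n$-gadget is present exactly when $n$ has been enumerated into that set by stage $j$, a $\Delta^0_0$ condition, so $E$ is $\Delta^0_1$ (indeed primitive recursive) and $G$ is a legitimate object of $\rcasys$. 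One then checks in $\rcasys$ that $G$ is locally $\ell$-colorable: every finite subgraph lies inside the explicitly $\ell$-colored ``stage-$s$ approximation'' of $G$ for $s$ large. Applying $\colstat(\ell,\ell,G)$ yields an $\ell$-coloring $\chi$, and $S=\{n:\chi(v_n)=\chi(r)\}$, a $\Delta^0_1(\chi)$ set, contains $A$ and is disjoint from $B$.

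\emph{Main obstacle.} The compactness direction is routine; the work is all in the reversal. The delicate points will be (i) arranging the gadgets so that $E$ is genuinely $\Delta^0_1$ despite the $\Sigma^0_1$ input, (ii) guaranteeing that no finite subgraph becomes non-$\ell$-colorable, so that $\rcasys$—not some stronger system—proves local $\ell$-colorability, and (iii) using $A\cap B=\emptyset$ to ensure the $A$-gadget and $B$-gadget for a given $n$ never activate simultaneously. I expect (i) and (ii), handled together by the shared-clique-plus-path design, to be the crux of the argument. (For $\ell=2$ the clique vanishes and $G$ is a forest whose essentially unique $2$-coloring is uncomputable precisely because one cannot compute the neighborhood of $r$; there $\colstat(2,2,G)$ asserts exactly that this coloring exists as a set.) Finally one invokes the standard equivalence of $\Sigma^0_1$ separation with $\wklstat$ over $\rcasys$ to conclude.
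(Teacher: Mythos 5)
The paper does not prove this theorem; it is quoted from Hirst's 1987 thesis, so there is no in-text argument to compare against. Your proposal is the standard proof of that cited result and is correct in outline: the forward direction is the routine compactness argument via bounded K\"onig's lemma applied to the $\Delta^0_1$ tree of partial colorings, and the reversal codes $\Sigma^0_1$ separation (equivalent to $\wklstat$ over $\rcasys$) into an ornamented graph built from an $(\ell-2)$-clique plus parity-controlled paths, exactly as in the literature. The one spot where your description does not quite work as written is the activation rule ``the edge at position $j$ of the $n$-gadget is present exactly when $n$ has been enumerated into that set by stage $j$'': taken literally this makes the live portion of the gadget a \emph{tail} of the path, disconnected from $v_n$, so no constraint is ever transmitted to $r$. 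The standard repair is to make the spine $v_n - u_{n,0} - u_{n,1} - \cdots$ (with all $u_{n,j}$ joined to the clique) present unconditionally --- by itself it forces nothing --- and to add the single closing edge $u_{n,j} - r$ at the unique position $j$ whose parity encodes membership in $A$ versus $B$ and which is determined by the stage at which $n$ is enumerated; deciding whether such an edge is present is then a bounded search, so $E$ is $\Delta^0_1$, and local $\ell$-colorability is witnessed by the stage-$s$ approximations as you say. With that adjustment, together with your observation that $A \cap B = \emptyset$ guarantees at most one gadget per $n$ ever closes, the set $S = \{n : \chi(v_n) = \chi(r)\}$ separates $A$ from $B$ and the argument goes through in $\rcasys$.
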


In~\cite{Bean:1976ta}, Bean gave an example of a recursive $3$-colorable graph that has no recursive $k$-coloring for any $k \in \omega$. This result suggests that coloring a $3$-colorable (or more generally $\ell$-colorable) graph with any finite number of colors may also be difficult from the proof-theoretic point of view.  To this end, Gasarch and Hirst proved the following theorem.

\begin{Theorem}[\cite{Gasarch:1998gy}~Theorem~3]\label{thm-gh}
\begin{align*}
\rcasys \vdash (\forall \ell \geq 2)(\wklstat \biimp \forall G\,\colstat(\ell,2\ell-1,G)).
\end{align*}
\end{Theorem}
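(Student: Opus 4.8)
The forward implication requires nothing new. Assume $\wklstat$ and let $G$ be locally $\ell$-colorable. By the forward direction of Theorem~\ref{thm-h}, $G$ has an $\ell$-coloring $\chi\colon V\imp\ell$. Since $\ell\geq 2$ we have $\ell\leq 2\ell-1$, so this same $\chi$ is a function $V\imp 2\ell-1$ and remains a proper coloring; hence $G$ is $(2\ell-1)$-colorable, which is what $\colstat(\ell,2\ell-1,G)$ asserts.

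For the reverse implication the plan is to argue in $\rcasys$ from $\forall G\,\colstat(\ell,2\ell-1,G)$ to $\wklstat$, that is, to produce from an arbitrary infinite tree $T\subseteq 2^{<\Nb}$ an infinite path through $T$. I would build, uniformly in $T$ (so that $G_T$ is available by $\Delta^0_1$ comprehension), a graph $G_T$ such that (a) $G_T$ is locally $\ell$-colorable, and (b) from any $(2\ell-1)$-coloring $\chi$ of $G_T$ one can define, by $\Delta^0_1$ comprehension, an infinite path through $T$. Granting (a), the hypothesis $\colstat(\ell,2\ell-1,G_T)$ supplies a $(2\ell-1)$-coloring of $G_T$, and then (b) produces the path. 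Although $\wklstat$ is equivalent over $\rcasys$ to $\forall f\,\dnrstat(2,f)$, the dual strategy of reading a $\dnrstat(2,f)$ function off a coloring does not seem workable directly: a locally $\ell$-colorable graph contains no $(\ell+1)$-clique and is therefore too sparse to ``force'' the colour of any single vertex, so the compactness content of the reversal has to be carried by a K\"onig-style propagation along the tree rather than by local gadgets.

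The graph $G_T$ is assembled from finite \emph{selector gadgets}: for each $\sigma\in T$ a finite gadget $H_\sigma$, and for each $\sigma,\sigma\smf i\in T$ a fixed finite set of edges joining $H_\sigma$ to $H_{\sigma\smf i}$. The gadgets are designed so that each $H_\sigma$ is $\ell$-colorable in each of several ``states''---an \emph{inactive} state, and for each child $\sigma\smf i\in T$ an \emph{active-and-routing-to-$\sigma\smf i$} state---the states being arranged so that along each connecting edge set the only compatible pairs are ``$H_\sigma$ inactive'' with anything and ``$H_\sigma$ active, routing to $\sigma\smf i$'' with ``$H_{\sigma\smf i}$ active'', while a further device forces $H_\emptyset$ to be active in every coloring. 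The key point is to calibrate the gadgets so that in \emph{any} proper coloring using only $2\ell-1$ colours an active $H_\sigma$ is forced to route to \emph{exactly one} child; $2\ell-1$ should be the precise threshold for this, since with $2\ell$ available colours the extra slack would let an active gadget route to no child or to two children. Given this, (b) follows: in a $(2\ell-1)$-coloring the activity propagates from $H_\emptyset$ down a single branch $\emptyset\subset\sigma_1\subset\sigma_2\subset\cdots$ of $T$, whose terms are recovered level by level from $\chi$. For (a), a finite induced subgraph of $G_T$ meets only the gadgets $H_\sigma$ for $\sigma$ in some finite subtree $F\subseteq T$, and such a finite union is $\ell$-colorable by routing activity from $H_\emptyset$ down one suitable branch of $F$ and leaving every other gadget inactive.

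The main obstacle---and the real content of the theorem, due to Gasarch and Hirst---is the explicit design of the selector gadgets and the verification that $2\ell-1$ is exactly the threshold in the calibration claim above; this is where the arithmetic of the bound enters. A second delicate point is the treatment of dead ends of $T$: if a leaf $\sigma$ of $T$ has no ``active'' colouring, then a finite $F$ in which $H_{\sigma^-}$ is compelled to route to the leaf $\sigma$ would fail to be $\ell$-colorable, so one must either pre-process $T$ or arrange the propagation so that activity is never forced into a leaf---the standard device in reversals of this kind (compare the construction behind Theorem~\ref{thm-h} and Bean's graph in \cite{Bean:1976ta}). Finally I would check that everything is available in $\rcasys$: the colourings, the gadget states, and the extracted branch are handled by bounded $\Sigma^0_1$ comprehension, the finite combinatorics of the gadgets needs only $\iso$ (as with Lemmas~\ref{lem-CoverWith2Trees} and~\ref{lem-CoverWithNTrees}), and no $\Sigma^0_2$ induction is used.
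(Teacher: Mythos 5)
Your forward direction is fine, but the reversal is not a proof: you have written down a plan whose entire mathematical content --- the explicit selector gadgets, the connecting edge sets, and the verification that a $(2\ell-1)$-coloring forces an active gadget to route to exactly one child --- is deferred, and you say so yourself (``the main obstacle---and the real content of the theorem... is the explicit design of the selector gadgets''). Until those gadgets are exhibited and the routing claim is verified, nothing has been proved. Note also that the paper itself does not prove this statement; it imports it from Gasarch and Hirst, so the burden of the construction cannot be discharged by reference to anything in this document.

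Two further points suggest your architecture is not the right one. First, your calibration heuristic --- that $2\ell-1$ is ``the precise threshold'' and that $2\ell$ colours would break the routing --- is in tension with Theorem~\ref{thm-Schmerl} and Conjecture~\ref{conj-gh}, stated immediately after this theorem in the paper: Schmerl proved that for fixed standard $k \geq \ell$ the reversal of $\colstat(\ell,k,G)$ to $\wklstat$ still goes through, so $2\ell-1$ is not a threshold for reversibility but merely the bound for which the particular Gasarch--Hirst gadget works. The arithmetic actually driving that bound is a pigeonhole fact about cliques: among $2\ell-1$ colours, any two $\ell$-cliques must share a colour, and this is what lets a colouring be decoded. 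Second, the dead-end problem you flag is fatal to a na\"{\i}ve K\"onig-style propagation and is not resolved by your sketch; the standard reversal avoids trees entirely by going through $\Sigma^0_1$ separation (equivalent to $\wklstat$ over $\rcasys$): given disjoint r.e.\ sets $A$ and $B$, one attaches, for each $n$, a finite clique gadget to one of two anchor structures when $n$ is enumerated into $A$ or into $B$, so that local $\ell$-colorability is immediate (each gadget commits to at most one side) and a $(2\ell-1)$-coloring determines, via the clique-overlap pigeonhole, a $\Delta^0_1$ separating set. You should either carry out that construction in full or substantiate your routing gadgets; as it stands the reversal is a gap.
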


Gasarch and Hirst then conjectured that the $(2\ell-1)$ in their theorem can be replaced by any $k \geq \ell$.

\begin{Conjecture}[\cite{Gasarch:1998gy}~Conjecture~4]\label{conj-gh}
\begin{align*}
\rcasys \vdash (\forall \ell \geq 2)(\forall k \geq \ell)(\wklstat \biimp \forall G\,\colstat(\ell,k,G)).
\end{align*}
\end{Conjecture}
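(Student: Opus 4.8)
The plan is to split the biconditional into its two directions. The easy direction requires no new ideas: assuming $\wklstat$, Theorem~\ref{thm-h} gives that every locally $\ell$-colorable graph is globally $\ell$-colorable, and an $\ell$-coloring is in particular a $k$-coloring whenever $k \geq \ell$, so $\forall G\,\colstat(\ell,k,G)$ follows. All the content of Conjecture~\ref{conj-gh} lies in the implication $\forall G\,\colstat(\ell,k,G) \imp \wklstat$, and here the plan is the standard one for this circle of results: encode an arbitrary infinite binary tree into a suitable graph.

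Concretely, I would try to define, uniformly and provably in $\rcasys$, an operation sending an infinite tree $T \subseteq 2^{<\Nb}$ to a graph $G_T$ and a Turing functional $\Gamma$ such that (a) $\rcasys$ proves $G_T$ is locally $\ell$-colorable, and (b) $\rcasys$ proves that for every $k$-coloring $\chi$ of $G_T$ the function $\Gamma(\chi)$ is an infinite path through $T$. Granting this, $\forall G\,\colstat(\ell,k,G)$ applied to $G_T$ yields a $k$-coloring, hence a path through $T$, hence $\wklstat$. (Equivalently, one could aim to transform an arbitrary locally $\ell$-colorable graph $H$ into a locally $\ell$-colorable $H'$ from which a $k$-coloring computes an $\ell$-coloring of $H$, and then invoke Theorem~\ref{thm-h}.) The graph $G_T$ would be assembled, in the manner of Bean~\cite{Bean:1976ta} and of the $k = 2\ell-1$ construction in~\cite{Gasarch:1998gy}, from finitely describable \emph{gadgets}: finite graphs with distinguished boundary vertices that constrain the $k$-colorings in their vicinity. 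The gadgets would be laid out along $T$ so that any $k$-coloring is forced, at each node $\sigma \in T$ having children in $T$, to make a choice that selects one such child; these choices cohere into a path that is $\Delta^0_1$ in $\chi$. Local $\ell$-colorability of $G_T$ would then reduce to $\ell$-colorability of each gadget together with a compatible $\ell$-coloring of the underlying skeleton, much as local colorings in Bean-type constructions succeed precisely because they see only finitely much of $T$.

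The main obstacle---and surely the reason the conjecture is still open---is the construction of the gadgets for a general $k \geq \ell$. One needs a finite graph that is $\ell$-colorable yet, in every $k$-coloring, behaves at its boundary as though only $\ell$ colors were available; the obvious color-reducing gadgets (cliques, or $K_{k+1}$ minus an edge, and the like) have chromatic number near $k$ and so destroy local $\ell$-colorability. Two routes seem worth pursuing. The first is an induction on $k$: prepend to the $(\ell,k)$-gadget a single reduction gadget lowering the effective number of colors from $k+1$ to $k$ while being itself locally $\ell$-colorable, thereby bootstrapping from the Gasarch--Hirst case $k = 2\ell - 1$. The second is a direct construction from a sparse high-chromatic object such as a Kneser graph $KG_{n,r}$, whose chromatic number $n - 2r + 2$ can be tuned to any desired value and which is $(n-2r+2)$-colorable; the task is then to turn such a graph into a boundary-controlling gadget without spoiling local $\ell$-colorability of the ambient graph.

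Whichever route is taken, there is a further difficulty peculiar to the formal setting that I would watch carefully. The sentence to be proved quantifies over $\ell$ and $k$ as first-order variables inside the scope of $\rcasys \vdash$, so the gadgets and their analyses must be uniform in $\ell$ and $k$, correct even at nonstandard values, and the supporting combinatorics (a chromatic-number computation for Kneser graphs, or the inductive gadget analysis) must go through with only the weak induction available in $\rcasys$---the same discipline observed by the tree lemmas of Section~3, for which $\iso$ suffices. Given the theme of this paper, one should keep in mind the possibility that this uniformity genuinely fails over $\rcasys$, just as $\forall f\,\dnrstat(k,f) \imp \forall f\,\dnrstat(2,f)$ fails over $\rcasys + \bst$; in that event the correct statement would be $\rcasys + \ist \vdash (\forall \ell \geq 2)(\forall k \geq \ell)(\wklstat \biimp \forall G\,\colstat(\ell,k,G))$, proved by running the gadget construction inside a $\Pi^0_2$-least-element argument in the style of Theorem~\ref{thm-DNR2inISigma2}. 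Deciding which of these is the truth is, I think, the real heart of the problem.
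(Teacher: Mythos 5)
This statement is a \emph{conjecture} (Gasarch--Hirst, Conjecture~4 of~\cite{Gasarch:1998gy}); the paper does not prove it and, as far as the authors know, it remains open. Your proposal is accordingly not a proof but a research plan, and you say as much yourself: the entire content of the backward direction $\forall G\,\colstat(\ell,k,G) \imp \wklstat$ rests on the construction of finite, locally $\ell$-colorable gadgets that force every $k$-coloring to reveal a path through an arbitrary tree, and you explicitly flag that you do not have such gadgets for general $k \geq \ell$. That missing construction is precisely the open content of the conjecture, so the gap is not a technical oversight that can be patched --- it is the whole problem. Neither of your two proposed routes (bootstrapping a single color-reduction gadget from the $k = 2\ell-1$ case, or building gadgets from Kneser graphs) is carried out, and each faces exactly the obstruction you name: any finite subgraph that constrains $k$-colorings at its boundary down to $\ell$ effective colors tends to have chromatic number close to $k$, destroying local $\ell$-colorability.

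For comparison, the partial result the paper does establish --- the conjecture with $\rcasys + \ist$ in place of $\rcasys$ --- is obtained by a different route from your gadget scheme: Corollary~\ref{cor-COLtoDNRD} converts $\forall G\,\colstat(2,k,G)$ into $\exists d\,\forall f\,\dnrstat_d(k,f)$ via Schmerl's on-line coloring games (Lemma~\ref{lem-Schmerl}) applied to the natural class of forests, and then $\ist$ is used, via a $\Pi^0_2$-least-element argument as in Theorem~\ref{thm-DNR2inISigma2}, to recover $\wklstat$ from the bounded depth-$d$ diagonally non-recursive sequences. Your closing remark that the ``correct'' statement may only be provable over $\rcasys + \ist$ is thus well taken and matches what the paper actually proves; but note that over $\rcasys + \bst$ the intermediate principle $\exists k\,\forall f\,\dnrstat(k,f)$ provably fails to yield $\wklstat$ (Theorem~\ref{thm-noWKL}), and combining this with Theorem~\ref{thm-gh} already shows (as in the paper's Proposition) that $\rcasys + \bst \nvdash \forall k(\forall f\,\dnrstat(k,f) \imp \forall G\,\colstat(k,2k-1,G))$. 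So any attempt to settle the conjecture over bare $\rcasys$ must either produce genuinely new gadgets or a counter-model; your proposal supplies neither.
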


In~\cite{Schmerl:2000el}, Schmerl verified a weakened version of this conjecture in which $\ell$ and $k$ are both fixed and standard.

\begin{Theorem}[\cite{Schmerl:2000el}~Theorem~1]\label{thm-Schmerl}
Fix $k,\ell \in \omega$ with $k \geq \ell \geq 2$.  Then 
\begin{align*}
\rcasys \vdash \wklstat \biimp \forall G\,\colstat(\ell,k,G).
\end{align*}
\end{Theorem}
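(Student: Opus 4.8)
The forward implication $\wklstat\imp\forall G\,\colstat(\ell,k,G)$ is immediate from Theorem~\ref{thm-h}: reasoning in $\rcasys+\wklstat$, a locally $\ell$-colorable graph is globally $\ell$-colorable, and since $k\ge\ell$ an $\ell$-coloring is in particular a $k$-coloring, so $\colstat(\ell,k,G)$ holds.  All of the content is in the reversal $\forall G\,\colstat(\ell,k,G)\imp\wklstat$ over $\rcasys$.  Here I would first observe that the sub-range $\ell\le k\le 2\ell-1$ requires nothing new: every $k$-coloring is a $(2\ell-1)$-coloring, so $\forall G\,\colstat(\ell,k,G)$ implies $\forall G\,\colstat(\ell,2\ell-1,G)$, and the latter implies $\wklstat$ by the instance of Theorem~\ref{thm-gh} at our fixed $\ell$.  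Thus the genuine work is the case $k\ge 2\ell$, where a $k$-coloring carries ``spare'' colors and the Gasarch--Hirst gadgets calibrated to $2\ell-1$ colors no longer pin it down.

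For $k\ge 2\ell$ the plan is the usual template for such reversals.  Given an infinite tree $T\subseteq 2^{<\Nb}$, construct, uniformly and provably in $\rcasys$, a graph $G=G(T)$ such that (a) $G$ is locally $\ell$-colorable and (b) from any $k$-coloring of $G$ one computes an infinite path through $T$; applying $\colstat(\ell,k,G)$ then supplies a $k$-coloring, hence a path, so $\wklstat$ follows.  The graph $G$ would be built from a single fixed finite \emph{transmitter gadget} $\Gamma=\Gamma_{k,\ell}$, with a distinguished input port and two output ports, having the properties that (i) $\Gamma$ is $\ell$-colorable, and (ii) every $k$-coloring of $\Gamma$ forces the colors on its ports into a small, rigid family of patterns that encode ``forward one bit to exactly one of the two outputs''.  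Since $k$ and $\ell$ are standard, $\Gamma_{k,\ell}$ is an honest finite object, available outright in $\rcasys$.  One then forms $G$ by placing a copy of $\Gamma$ at the root of $T$ and, recursively, wiring the two output ports of the gadget at each node $\sigma\in T$ to the input ports of the gadgets at $\sigma^\smf 0$ and $\sigma^\smf 1$ (when those lie in $T$), with auxiliary structure arranged so that a global $k$-coloring of $G$ is forced to select, level by level, a single child in a way consistent down the tree.

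Two verifications must then be carried out strictly inside $\rcasys$, taking care not to invoke $\wklstat$.  First, local $\ell$-colorability: an arbitrary finite $V_0\subseteq V(G)$ meets only finitely many nodes of $T$, so since $T$ is infinite one may fix a $\sigma\in T$ extending all of them and then $\ell$-color the finite portion of $G$ lying along the initial segments of $\sigma$ by copying a single fixed $\ell$-coloring of $\Gamma$ into every gadget; this is a bounded search, handled by bounded $\Sigma^0_1$ comprehension together with the $\rcasys$-provable K\"onig's lemma for finite, finitely-branching trees.  Second, path extraction: a $k$-coloring $\chi$ of $G$ supplied by $\colstat(\ell,k,G)$ determines, via property (ii), the child selected at each level, and the resulting $p\colon\Nb\imp 2$ satisfies the $\Pi^0_1$ statement $\forall n\,(p\restriction n\in T)$ by a straightforward induction on the gadget constraints; hence $p$ is an infinite path through $T$ and $\wklstat$ holds.

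I expect the main obstacle to be step (ii): producing $\Gamma_{k,\ell}$ and proving that $k$ colors cannot be laid on it except in the intended rigid way.  This is the genuinely new combinatorics beyond the $2\ell-1$ case, it has a distinctly Ramsey-type flavor with bounds that grow with $k$ and $\ell$, and it is precisely where the standardness of $k$ and $\ell$ is used — a construction uniform in $k$ and $\ell$ would establish Conjecture~\ref{conj-gh} rather than merely Theorem~\ref{thm-Schmerl}.  Note in particular that the naive device of attaching a large clique to absorb the extra colors is unavailable, since a clique of size exceeding $\ell$ would destroy $\ell$-colorability; reconciling $\ell$-colorability with rigidity under $k$-colorings is the heart of the matter.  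A secondary, more routine, point is to confirm that the encoding of $G$ and the local-$\ell$-colorability argument really do use only the resources of $\rcasys$.
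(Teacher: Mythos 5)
Your forward direction via Theorem~\ref{thm-h} and your reduction of the range $\ell\le k\le 2\ell-1$ to Theorem~\ref{thm-gh} are both fine, but for $k\ge 2\ell$ the proposal rests entirely on a finite ``transmitter gadget'' $\Gamma_{k,\ell}$ that is $\ell$-colorable yet whose $k$-colorings are rigid enough to route a bit to exactly one of two output ports --- and you never construct it. This is not a routine omission that can be deferred: an $\ell$-colorable finite graph admits a great many $k$-colorings once $k\ge 2\ell$, the Gasarch--Hirst gadgets lose their rigidity precisely at $2\ell-1$, and the difficulty of extracting any canonical information from a $k$-coloring of a static gadget is exactly why Conjecture~\ref{conj-gh} remains open. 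As written, your argument defers the entire content of the theorem to an object whose existence is the question, and nothing in the sketch indicates how the standardness of $k$ and $\ell$ would be brought to bear on building it.

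Schmerl's actual proof (the paper only cites it, but Section~5 describes both ingredients) circumvents static rigidity altogether. One shows in $\rcasys$ that $\forall$ has a recursive winning strategy in the on-line coloring game $\Gamma_{2^k}(\cls K,k)$, where $\cls K$ is the class of forests --- which are locally $2$-colorable, hence locally $\ell$-colorable for every $\ell\ge 2$. By Lemma~\ref{lem-Schmerl}, if the depth-$2^k$ principle $\dnrstat_{2^k}(k,f)$ fails for some $f$, one builds \emph{adaptively} (in response to the convergence behavior of computations, not by wiring gadgets along a fixed tree $T$) an $f$-recursive forest that is not globally $k$-colorable. Contrapositively, $\forall G\,\colstat(\ell,k,G)$ yields $\forall f\,\dnrstat_{2^k}(k,f)$; and since $k$ and $2^k$ are standard, an externally finite Friedberg/Jockusch-style argument of the kind used for Theorem~\ref{thm-DNR2inISigma2} converts a $k$-bounded depth-$2^k$ diagonally non-recursive sequence into a $2$-bounded diagonally non-recursive function, giving $\forall f\,\dnrstat(2,f)$ and hence $\wklstat$. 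This is where the restriction to standard $k,\ell$ genuinely enters. To repair your write-up you must either supply $\Gamma_{k,\ell}$ together with a proof of its rigidity under $k$-colorings --- which would be a substantial new combinatorial result --- or adopt the game-theoretic route through $\dnrstat_d(k,f)$.
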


Schmerl connected two key ingredients to prove Theorem~\ref{thm-Schmerl}.  The first ingredient is the \emph{on-line coloring game} $\Gamma_d(\cls K,k)$, where $\cls K$ is a class of graphs and $d,k \in \Nb$.  $\Gamma_d(\cls K,k)$ is a game between two players, $\forall$ and $\exists$.  Player $\forall$ builds a graph in $\cls K$, and Player $\exists$ $k$-colors it.  The game lasts for $d$ rounds.  In each round, $\forall$ and $\exists$ alternate plays as follows.  Player $\forall$ goes first by adding a new vertex to the graph and connecting it to the existing vertices in such a way that the graph remains in $\cls K$.  Player $\exists$ goes second and colors the new vertex with a color from $\{0,1 ,\dots, k-1\}$.  After $d$ rounds, $\exists$ wins if she has produced a $k$-coloring of the graph enumerated by $\forall$.  We say that the class $\cls K$ is \emph{locally on-line $k$ colorable} if for every $d \in \Nb$, $\exists$ has a winning strategy in $\Gamma_d(\cls K,k)$.  (Assuming $\wklstat$, it then follows that $\exists$ has a winning strategy in the unbounded on-line coloring game $\Gamma(\cls K,k)$, where the two players continue for as long as $\forall$ keeps playing new vertices.)

The second ingredient can be found in \cite{Schmerl:2000el}~Lemma~2.3, where Schmerl isolates a recursion-theoretic principle similar to the negation of $\dnrstat(k,f)$.
Fix $d \geq 1$, along with a primitive recursive $d$-tupling function $\Nb^d \imp \Nb$ with associated primitive recursive projections $p_0,\dots,p_{d-1} \colon \Nb \imp \Nb$.
Given a function $f$, define 
\[\Delta^f_{i,d}(x) = \begin{cases}
\Phi^f_{p_i(x)}(p_i(x)) & \text{if $\Phi^f_{p_j(x)}(p_j(x))\da$ for all $j \leq i$,} \\ 
\uparrow & \text{otherwise.}
\end{cases}\]
Write $D^f_{i+1,d} = \dom(\Delta^f_{i,d})$, and set $D^f_{0,d} = \Nb$.

\begin{Definition}\label{def-depth}
Consider a function $f \colon \Nb \imp \Nb$ and a length $d$ sequence $\vec g$ of functions $g_i \colon D^f_{i,d} \imp \Nb$ for $i < d$.
\begin{itemize}
\item The sequence $\vec g$ is \emph{depth $d$ diagonally non-recursive relative to $f$} ($\vec g$ is $\dnrstat_d(f)$ for short) if $(\forall x)(\exists i < d)(g_i(x) \neq \Delta^f_{i,d}(x))$.

\item The sequence $\vec g$ is \emph{$k$-bounded depth $d$ diagonally non-recursive relative to $f$} ($\vec g$ is $\dnrstat_d(k,f)$ for short) if it is $\dnrstat_d(f)$ and each $g_i$ is $k$-bounded.
\end{itemize}
\end{Definition}

Overloading notation as we did before, we let $\dnrstat_d(f)$ denote the formal statement ``there is a $\vec{g}$ that is $\dnrstat_d(f)$,'' and we let $\dnrstat_d(k,f)$ denote the formal statement ``there is a $\vec{g}$ that is $\dnrstat_d(k,f)$.''  Although different $d$-tupling schemes lead to different classes of $\dnrstat_d(f)$-functions, it is always possible to translate back and forth between any two such schemes.  In particular, the principles $\dnrstat_d(f)$ and $\dnrstat_d(k,f)$ are unaffected by such choices.  We therefore see that $\dnrstat(k,f)$ is equivalent to $\dnrstat_1(k,f)$ and that if $c > d$ then $\dnrstat_d(k,f)$ implies $\dnrstat_c(k,f)$.  A further relation between these principles is given by the following lemma.

\begin{Lemma}[$\rcasys$]\label{lem-packing}
Let $f \colon \Nb \imp \Nb$ be a function, and let $c,d,k \in \Nb$ be positive.  Then $\dnrstat_c(k^d,f)$ implies $\dnrstat_{cd}(k,f)$.  In particular, $\dnrstat(k^d,f)$ implies $\dnrstat_d(k,f)$.
\end{Lemma}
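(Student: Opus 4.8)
The plan is to \emph{split} the $k^d$-valued functions of a $\dnrstat_c(k^d,f)$-witness into $k$-valued functions by reading off coordinates, exploiting that $\dnrstat_{cd}$ does not depend on the choice of $cd$-tupling. Identify $k^d$ with the length-$d$ strings over $\{0,1,\dots,k-1\}$, fix a $c$-tupling with projections $p_i$ ($i<c$), and fix any $cd$-tupling with projections $\rho_m$ ($m<cd$), so that $\Delta^f_{m,cd}(y)=\Phi^f_{\rho_m(y)}(\rho_m(y))$ whenever $\Phi^f_{\rho_{m'}(y)}(\rho_{m'}(y))\da$ for all $m'\le m$. The key gadget, built from the $\rho_m$ by the $s$-$m$-$n$ theorem (available in $\rcasys$), is a tuple of primitive recursive functions $e_i(y)$ (primitive recursive uniformly in $i<c$ and $y$) such that $\Phi^f_{e_i(y)}$ is the constant function with value the length-$d$ string $\la\Phi^f_{\rho_{id}(y)}(\rho_{id}(y)),\dots,\Phi^f_{\rho_{id+d-1}(y)}(\rho_{id+d-1}(y))\ra$ when all $d$ of those computations converge, and is nowhere defined otherwise. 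Since $\Phi^f_{e_i(y)}$ ignores its input, $\Phi^f_{e_i(y)}(e_i(y))\da$ exactly when those $d$ computations converge, and then $\Phi^f_{e_i(y)}(e_i(y))$ equals that string. Let $z(y)$ be the code of the $c$-tuple $(e_0(y),\dots,e_{c-1}(y))$, so that $p_i(z(y))=e_i(y)$; $z$ is primitive recursive.

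Given a $\dnrstat_c(k^d,f)$-witness $\vec g=(g_0,\dots,g_{c-1})$, define $g'_{id+j}(y)=g_i(z(y))(j)$, the value at $j$ of the string $g_i(z(y))$, for $i<c$ and $j<d$. Each $g'_m$ is $k$-bounded. Because $g_i$ is $k^d$-bounded, its domain equals $\{w:(\exists\sigma\in k^d)(\la w,\sigma\ra\in\mathrm{graph}(g_i))\}$, which is definable by a bounded formula from $\mathrm{graph}(g_i)$; hence $\dom(g'_{id+j})=\{y:z(y)\in\dom(g_i)\}$ and $\mathrm{graph}(g'_{id+j})$ are likewise definable by bounded formulas from the given data, so $\vec g'=(g'_0,\dots,g'_{cd-1})$ exists in $\rcasys$. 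Unwinding the definition of $e_i$ shows $\dom(g'_{id+j})=\{y:(\forall m'<id)(\Phi^f_{\rho_{m'}(y)}(\rho_{m'}(y))\da)\}=D^f_{id,cd}$, which contains $D^f_{id+j,cd}$, so $g'_m$ is defined wherever the $\dnrstat_{cd}$ requirement refers to it.

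To verify that $\vec g'$ is $\dnrstat_{cd}(f)$, fix $y$. If $\Phi^f_{\rho_m(y)}(\rho_m(y))\da$ for all $m<cd$, then $\Phi^f_{e_i(y)}(e_i(y))\da$ for all $i<c$, so $z(y)\in D^f_{c,c}$, and $\dnrstat_c(k^d,f)$ applied at $z(y)$ gives $i<c$ with $g_i(z(y))\neq\Delta^f_{i,c}(z(y))=\Phi^f_{e_i(y)}(e_i(y))=\la\Phi^f_{\rho_{id}(y)}(\rho_{id}(y)),\dots,\Phi^f_{\rho_{id+d-1}(y)}(\rho_{id+d-1}(y))\ra$. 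Distinct length-$d$ strings differ at some coordinate, so there is $j<d$ with $g_i(z(y))(j)\neq\Phi^f_{\rho_{id+j}(y)}(\rho_{id+j}(y))=\Delta^f_{id+j,cd}(y)$; that is, $g'_{id+j}(y)\neq\Delta^f_{id+j,cd}(y)$. Otherwise, let $m^*<cd$ be least with $\Phi^f_{\rho_{m^*}(y)}(\rho_{m^*}(y))\ua$ and write $m^*=i^*d+j^*$ with $i^*<c$ and $j^*<d$. Then $y\in D^f_{m^*,cd}$ while $\Delta^f_{m^*,cd}(y)\ua$, and $g'_{m^*}(y)\da$ because every $\Phi^f_{\rho_{m'}(y)}(\rho_{m'}(y))$ with $m'<i^*d$ converges (each such $m'$ is below $m^*$), whence $z(y)\in\dom(g_{i^*})$; so $g'_{m^*}(y)\neq\Delta^f_{m^*,cd}(y)$, a convergent value being unequal to a divergent computation. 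In either case the requirement at $y$ is met, so $\vec g'$ witnesses $\dnrstat_{cd}(k,f)$. The final assertion of the lemma is the case $c=1$, using that $\dnrstat_1(k,f)$ is $\dnrstat(k,f)$.

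The one delicate point is the bookkeeping matching the least divergent index $m^*=i^*d+j^*$ of the $cd$-scheme with block $i^*$ of the $c$-scheme, so that the \emph{automatic} handling of divergence for $\dnrstat_c$ transfers to the correct coordinate for $\dnrstat_{cd}$; everything else is routine once $e_i$ is in place, and it is exactly the $k^d$-boundedness of the $g_i$ that lets $\rcasys$ form $\vec g'$.
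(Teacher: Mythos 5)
The paper states Lemma~\ref{lem-packing} without proof, so there is no argument of the authors' to compare yours against; what you give is the natural packaging argument and it is correct in substance: use the $s$-$m$-$n$ theorem to build, for each block $i<c$, an index whose self-application returns the $d$-tuple of diagonal values $\Phi^f_{\rho_{id+j}(y)}(\rho_{id+j}(y))$, apply the $\dnrstat_c(k^d,f)$-witness at the packaged point $z(y)$, read off coordinates, and let the least divergent index be handled by the defined-versus-undefined clause. Two points deserve to be made explicit. First, in the all-convergent case the tuple $\la\Phi^f_{\rho_{id}(y)}(\rho_{id}(y)),\dots,\Phi^f_{\rho_{id+d-1}(y)}(\rho_{id+d-1}(y))\ra$ need not be a $k$-ary string, so "distinct length-$d$ strings differ at some coordinate" does not literally apply under the identification of $k^d$ with $k$-ary strings; the fix is immediate (if some $\Phi^f_{\rho_{id+j}(y)}(\rho_{id+j}(y))\geq k$ then $g'_{id+j}(y)<k$ already meets the requirement at $id+j$ without invoking the $\dnrstat_c$ property), but the case split should appear, and the gadget should be specified to return values under the same identification when the tuple is $k$-ary, so that inequality of numbers really yields inequality at a coordinate. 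Second, as you yourself flag, $\dom(g'_{id+j})=D^f_{id,cd}$ properly contains the domain $D^f_{id+j,cd}$ prescribed by Definition~\ref{def-depth} when $j>0$, and the restriction to $D^f_{id+j,cd}$ has only a $\Sigma^0_1$ graph, so it cannot in general be formed in $\rcasys$; one must therefore read the definition as requiring each $g_m$ to be defined \emph{at least} on $D^f_{m,cd}$ (which is all the verification, and every use of the principle in the paper, actually needs). With these caveats recorded, your proof is complete.
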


Schmerl considers colorings of graphs in classes $\cls K$ of a certain kind.  The class $\cls K$ is a \emph{universal class of graphs} if there is a set $K$ of finite (coded) graphs such that a graph $G$ belongs to $\cls{K}$ if and only if every finite induced subgraph of $G$ is isomorphic to a graph in $K$. The class $\cls K$ is a \emph{natural class of graphs} if it is moreover closed under disjoint sums. That is, if $G_0 = (V_0,E_0),G_1 = (V_1,E_1)$ are graphs in $\cls K$ with mutually disjoint vertex sets, then $G_0 + G_1 = (V_0 \cup V_1, E_0 \cup E_1)$ is also in $\cls K$.  It then follows that the class $\cls K$ is closed under countable disjoint sums.  For every positive integer $\ell$, the locally $\ell$-colorable graphs form a natural class of graphs.

The link between the on-line coloring games and the generalized $\dnrstat$ principles is the following result, which can be extracted from the proof of \cite{Schmerl:2000el}~Theorem~2.1.

\begin{Lemma}[$\rcasys$]\label{lem-Schmerl}
Let $f \colon \Nb \imp \Nb$ be a function, $\cls K$ be a natural class of graphs, and $d,k \in \Nb$ be positive.  If $\forall$ has a winning strategy in $\Gamma_d(\cls K,k)$ and $\dnrstat_d(k,f)$ fails, then there is an $f$-recursive graph from the class $\cls K$ that is not globally $k$-colorable.  We may further require that the connected components of this graph have size at most $d$, and the graph construction is uniform in the parameters $k,d$ and $f$.
\end{Lemma}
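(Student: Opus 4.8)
The plan is to run, in parallel, a play of the on-line coloring game $\Gamma_d(\cls K,k)$ for each tuple-code $x \in \Nb$, using the outcome of the computation $\Phi^f_{p_i(x)}(p_i(x))$ as Player $\exists$'s $i$-th color choice in the $x$-th play. An arbitrary global $k$-coloring of the resulting graph then yields a depth-$d$, $k$-bounded sequence $\vec g$, which by hypothesis fails to be $\dnrstat_d(f)$; the place where $g_i$ agrees with $\Delta^f_{i,d}$ in every coordinate will be exactly a component of the graph on which $\forall$'s winning strategy has already defeated the coloring. Concretely, first I would fix a winning strategy $\mathcal T$ for $\forall$ in $\Gamma_d(\cls K,k)$: on a legal history $(c_0,\dots,c_{i-1}) \in k^{<d}$ of $\exists$'s colors, $\mathcal T$ returns the set of previously played vertices to which $\forall$ joins the round-$(i{+}1)$ vertex, keeping the graph in $\cls K$. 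The key consequence, invoked at the end, is that since $\mathcal T$ wins, for every $\vec c = (c_0,\dots,c_{d-1}) \in k^d$ the $d$-vertex graph $H_{\vec c}$ that $\forall$ builds by following $\mathcal T$ against $\vec c$ is such that the assignment sending the round-$(j{+}1)$ vertex to $c_j$ is not a proper $k$-coloring of $H_{\vec c}$.

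Next I would build the graph $G$ with vertex set $V = \{v_{x,i} : x \in \Nb,\ i < d\}$, a recursive set under a standard coding. For each $x$ I run, $f$-computably and in stages, a play of $\Gamma_d(\cls K,k)$ in which $\forall$ follows $\mathcal T$ and, in round $i{+}1$, $\exists$ plays the color $c_{x,i} := \Phi^f_{p_i(x)}(p_i(x))$ when this converges to a value below $k$, plays $0$ when it converges to a value at least $k$, and is stalled when it diverges (so the $x$-play never reaches round $i{+}2$ and $v_{x,i},v_{x,i+1},\dots$ remain isolated). The edges $\forall$ adds in round $i{+}1$, namely $\mathcal T(c_{x,0},\dots,c_{x,i-1})$, become the edges of $v_{x,i}$ to earlier vertices $v_{x,j}$, $j<i$. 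Then $G$ is the disjoint union over $x$ of the components $G^{(x)}$ so produced. Each $G^{(x)}$ is a legal (possibly truncated) $\forall$-play with any unreached vertices adjoined as isolated points, so $G^{(x)} \in \cls K$ (using that $\cls K$ contains the one-vertex graph); since $\cls K$ is natural it is closed under countable disjoint sums, so $G \in \cls K$. The connected components of $G$ are contained in the $G^{(x)}$, hence have at most $d$ vertices, and the whole construction is $f$-computable and uniform in $\mathcal T,k,d,f$, presenting an $f$-recursive graph in $\cls K$ with components of size at most $d$.

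Finally I would check that $G$ has no global $k$-coloring. Suppose $\chi \colon V \to k$ is one, and set $g_i(x) = \chi(v_{x,i})$ for $i < d$; these are total $k$-bounded functions, so the failure of $\dnrstat_d(k,f)$ supplies an $x$ with $g_i(x) = \Delta^f_{i,d}(x)$ for every $i < d$. For each such $i$ this forces $\Phi^f_{p_i(x)}(p_i(x))$ to converge, forces $\chi(v_{x,i}) = \Phi^f_{p_i(x)}(p_i(x))$, and — since $\chi$ is valued below $k$ — forces this common value below $k$. Hence in the $x$-play every $c_{x,i}$ is exactly $\Phi^f_{p_i(x)}(p_i(x))$, the component $G^{(x)}$ is precisely $H_{\vec c}$ for $\vec c = (c_{x,0},\dots,c_{x,d-1}) \in k^d$, and $\chi$ restricted to $G^{(x)}$ is exactly the assignment $v_{x,i} \mapsto c_{x,i}$. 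By the property of $\mathcal T$ recorded above this assignment is improper on $H_{\vec c}$, so there are $j < j' < d$ with $v_{x,j}$ adjacent to $v_{x,j'}$ in $G$ and $\chi(v_{x,j}) = c_{x,j} = c_{x,j'} = \chi(v_{x,j'})$, contradicting that $\chi$ is a proper coloring of $G$.

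I expect the main obstacle to be not any single step but the bookkeeping needed to make the reduction exact in $\rcasys$: organizing the simulated plays so that the $\vec g$ extracted from $\chi$ has precisely the form demanded by the negation of $\dnrstat_d(k,f)$ (total, $k$-bounded, and an object of the model), handling the ``diverges'' and ``$\ge k$'' cases for $\Phi^f_{p_i(x)}(p_i(x))$ so that agreement of $g_i$ with $\Delta^f_{i,d}$ for all $i$ genuinely pins $\chi$ down on an entire component, and checking that the formation of $G$, the use of closure of $\cls K$ under disjoint sums, and the passage from $\chi$ to $\vec g$ all go through with only the $\Delta^0_1$ and bounded $\Sigma^0_1$ comprehension available in $\rcasys$.
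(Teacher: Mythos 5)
Your reduction is the intended one: the paper does not prove this lemma itself but extracts it from Schmerl's argument, which runs exactly as you describe --- one simulated play of $\Gamma_d(\cls K,k)$ per tuple-code $x$, with $\exists$'s colors read off the diagonal computations, and a global $k$-coloring of the resulting disjoint sum converted into a witness for $\dnrstat_d(k,f)$. The game-theoretic core (a winning strategy for $\forall$ defeats every color vector $\vec c \in k^d$), the use of naturality to assemble the components, and the final contradiction are all correct.

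The one point that is not mere bookkeeping is the $f$-recursiveness of $G$, which is part of what the lemma asserts and is needed so that $G$ is actually a set of the model to which the $k$-colorability hypothesis can later be applied. With the vertex set $\{v_{x,i} : x \in \Nb,\ i<d\}$ fixed in advance, the relation ``$v_{x,j}$ is adjacent to $v_{x,i}$'' holds if and only if $\Phi^f_{p_l(x)}(p_l(x))\da$ for all $l<i$ and $\mathcal T$ then joins round $j{+}1$ to round $i{+}1$; the first conjunct is $\Sigma^0_1(f)$ and in general not $\Delta^0_1(f)$, so your edge set need not exist in a model of $\rcasys$. The standard repair is to allocate the round-$(i{+}1)$ vertex of the $x$-th play dynamically, as a fresh number larger than the stage at which $c_{x,0},\dots,c_{x,i-1}$ have all been seen to converge; then every edge incident to a vertex $b$ is laid down at a stage $<b$, so whether $(a,b)\in E$ is decided by running the construction for $\max(a,b)$ steps, making $E$ genuinely $\Delta^0_1(f)$. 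This changes nothing else in your argument: components still have size at most $d$, the truncated plays plus isolated vertices still lie in $\cls K$, and the extracted $g_i(x)=\chi(v_{x,i})$ becomes a partial $(f\oplus\chi)$-recursive function with domain exactly $D^f_{i,d}$, which is precisely the shape Definition~\ref{def-depth} asks for (so the total-versus-partial issue you flag at the end also disappears).
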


Lemma~\ref{lem-Schmerl} has two immediate consequences.

\begin{Theorem}[$\rcasys$]\label{thm-Schmerl++}
Let $\cls K$ be a natural class of graphs.
\begin{itemize}
\item For every $k \in \Nb$, if $\cls K$ is not locally on-line $k$-colorable but every graph in $\cls K$ is $k$-colorable, then $\exists d \forall f \dnrstat_d(k,f)$.

\item If $\cls K$ is not locally on-line $k$-colorable for any $k \in \Nb$ but every graph in $\cls K$ is finitely colorable, then $\forall f \exists d \exists k \dnrstat_d(k,f)$.
\end{itemize}
\end{Theorem}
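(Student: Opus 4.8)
The plan is to derive both bullets of Theorem~\ref{thm-Schmerl++} directly from Lemma~\ref{lem-Schmerl} by contraposition, doing the necessary arithmetic inside $\rcasys$.

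\textbf{First bullet.} Fix $k \in \Nb$, and suppose $\cls K$ is not locally on-line $k$-colorable; that is, there is some $d \in \Nb$ for which $\forall$ has a winning strategy in $\Gamma_d(\cls K,k)$ (recall that $\cls K$ being locally on-line $k$-colorable means $\exists$ wins $\Gamma_d(\cls K,k)$ for \emph{every} $d$, so its negation, in a model where the relevant strategy-objects exist, hands us such a $d$; strictly, ``not locally on-line $k$-colorable'' should be read as the assertion that there is a $d$ with a winning $\forall$-strategy). Suppose toward a contradiction that $\dnrstat_d(k,f)$ fails for this particular $d$. Then Lemma~\ref{lem-Schmerl} produces an $f$-recursive graph $G$ in $\cls K$ that is not globally $k$-colorable, contradicting the hypothesis that every graph in $\cls K$ is $k$-colorable (note $G \in \MP S$ since it is $f$-recursive and $f$ is a parameter). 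Hence $\dnrstat_d(k,f)$ holds, and since $f$ was arbitrary we get $\forall f\,\dnrstat_d(k,f)$, which witnesses $\exists d\,\forall f\,\dnrstat_d(k,f)$.

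\textbf{Second bullet.} Now suppose $\cls K$ is not locally on-line $k$-colorable for \emph{any} $k$, but every graph in $\cls K$ is finitely colorable. Fix $f$; we must produce $d$ and $k$ with $\dnrstat_d(k,f)$. Pick any $k \in \Nb$ (say $k = 2$). By the hypothesis applied to this $k$, there is a $d$ for which $\forall$ has a winning strategy in $\Gamma_d(\cls K,k)$. If $\dnrstat_d(k,f)$ failed, Lemma~\ref{lem-Schmerl} would yield an $f$-recursive graph $G \in \cls K$, with components of size at most $d$, that is not globally $k$-colorable. But we must still contradict ``finitely colorable,'' not ``$k$-colorable,'' so we need one more move: since the components of $G$ have size $\leq d$, any vertex in $G$ has degree $< d$, so $G$ is globally $d$-colorable by a greedy argument (colour each vertex avoiding the $<d$ colours on its neighbours; this is formalizable in $\rcasys$ using bounded $\Sigma^0_1$ comprehension to build the colouring stage by stage, or directly since $G$ is $f$-recursive). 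So $G$ is finitely colorable, consistent with the hypothesis---no contradiction yet. The fix is to choose $k$ large enough \emph{after} fixing the graph-construction, which we cannot do in this order; instead, re-examine: the hypothesis gives, for \emph{each} $k$, a $d_k$ with a $\forall$-strategy in $\Gamma_{d_k}(\cls K,k)$. If for every $k$ the principle $\dnrstat_{d_k}(k,f)$ held, we would be done (take $d = d_k$ for any $k$). Otherwise there is some $k$ with $\dnrstat_{d_k}(k,f)$ failing, and Lemma~\ref{lem-Schmerl} produces an $f$-recursive $G \in \cls K$ not $k$-colorable. This $G$ is nonetheless finitely colorable by hypothesis, say $\ell$-colorable; but then it is $\max(k,\ell)$-colorable, hence $k$-colorable if $\ell \le k$ --- the tension is genuine only when $\ell > k$, which is allowed. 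Thus $G$ not being $k$-colorable is perfectly compatible with $G$ being $\ell$-colorable for some $\ell > k$. So the clean argument is: fix $f$; for each $k$, let $d_k$ witness the $\forall$-strategy; \emph{claim} $\dnrstat_{d_k}(k,f)$ holds for at least one $k$. Indeed, if $\dnrstat_{d_k}(k,f)$ failed for \emph{every} $k$, then for every $k$ we obtain an $f$-recursive non-$k$-colorable graph $G_k \in \cls K$; but each $G_k$ is finitely colorable, which is not in itself contradictory, so this approach stalls. The correct observation, I expect, is that the graph-construction in Lemma~\ref{lem-Schmerl} is uniform in $k,d,f$, so from a single $f$ and the sequence $(d_k)$ one can attempt a \emph{single} graph that amalgamates the $G_k$ via the disjoint-sum closure of $\cls K$: if $\dnrstat_{d_k}(k,f)$ failed for all $k$ in a cofinal set, the countable disjoint sum $G = \sum_k G_k$ lies in $\cls K$, is $f$-recursive, and is not $k$-colorable for any $k$, hence not finitely colorable --- the desired contradiction. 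Therefore $\dnrstat_{d_k}(k,f)$ holds for some $k$, giving $\exists d\,\exists k\,\dnrstat_d(k,f)$, and since $f$ was arbitrary, $\forall f\,\exists d\,\exists k\,\dnrstat_d(k,f)$.

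\textbf{Main obstacle.} The delicate point, which I have been circling above, is the second bullet: passing from ``not $k$-colorable for the particular graph we built'' to ``not finitely colorable'' requires amalgamating the failure-witnesses for all $k$ into one graph, which is legitimate precisely because $\cls K$ is a natural class (closed under countable disjoint sums) and because the construction in Lemma~\ref{lem-Schmerl} is uniform in its parameters, so the sequence $(G_k : k \in \Nb)$ of $f$-recursive graphs exists as a single $f$-recursive object in the model. One must check in $\rcasys$ that this disjoint sum is again $f$-recursive (straightforward from the uniformity), that it belongs to $\cls K$ (from the closure under countable disjoint sums, which the excerpt notes follows from closure under finite disjoint sums), and that ``not $k$-colorable for each $k$'' genuinely yields ``not finitely colorable'' --- the latter is immediate since an $\ell$-coloring of the sum restricts to an $\ell$-coloring of each $G_k$, forcing $\ell \geq k$ for all $k$, which is impossible in a model of $\rcasys$ where $\ell$ is a fixed element. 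Everything else is routine contraposition.
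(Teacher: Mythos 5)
Your final argument is correct and is exactly the derivation the paper intends: Theorem~\ref{thm-Schmerl++} is presented as an immediate consequence of Lemma~\ref{lem-Schmerl}, obtained by contraposition for the first bullet (the $d$ witnessing failure of local on-line $k$-colorability is fixed before quantifying over $f$), and for the second bullet by using the uniformity of the construction in $k,d,f$ together with the closure of $\cls K$ under countable disjoint sums---both of which the paper sets up explicitly just before the lemma---to amalgamate the non-$k$-colorable witnesses $G_k$ into a single $f$-recursive graph in $\cls K$ that is not finitely colorable. The false starts you explore for the second bullet are correctly diagnosed and abandoned, and the argument you settle on in your closing paragraph is the intended one, so nothing essential is missing.
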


It is provable in $\rcasys$ that the natural class of forests (i.e., graphs without cycles) is not locally on-line $k$-colorable for any $k$. More precisely, one can recursively construct a strategy for $\forall$ in the game $\Gamma_{2^k}(\cls K,k)$, where $\cls K$ is the class of forests.  Since forests are locally $2$-colorable, it follows that for every $\ell \geq 2$, the natural class of locally $\ell$-colorable graphs is likewise not locally on-line $k$-colorable for any $k$.

\begin{Corollary}\label{cor-COLtoDNRD}{\ }
\begin{itemize}
\item $\rcasys \vdash \forall k (\forall G\,\colstat(2,k,G) \imp \exists d\forall f\dnrstat_d(k,f))$.
\item $\rcasys \vdash \forall G\exists k\colstat(2,k,G) \imp \forall f\exists d \exists k\dnrstat_d(k,f))$.
\end{itemize}
\end{Corollary}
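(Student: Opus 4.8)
The plan is to derive both bullets directly from Theorem~\ref{thm-Schmerl++}, applied in each case to the natural class $\cls{K}$ of locally $2$-colorable graphs, using the observation recorded in the paragraph just before the corollary: $\cls{K}$ is not locally on-line $k$-colorable for any $k \in \Nb$. Recall why that observation holds: $\cls{K}$ contains the class $\cls{F}$ of all forests, and $\rcasys$ proves that $\forall$ has a winning strategy in $\Gamma_{2^k}(\cls{F},k)$; since that strategy has $\forall$ play only forests, it is also a winning strategy for $\forall$ in $\Gamma_{2^k}(\cls{K},k)$, so $\exists$ has no winning strategy there. The other input is the already-cited fact that, for each $\ell \geq 1$, the locally $\ell$-colorable graphs form a natural class, so in particular $\cls{K}$ is a natural class and membership in $\cls{K}$ coincides with local $2$-colorability.

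For the first bullet, I would work in $\rcasys$, fix $k \in \Nb$, and assume $\forall G\,\colstat(2,k,G)$. Because a graph lies in $\cls{K}$ exactly when it is locally $2$-colorable, this assumption says precisely that every graph in $\cls{K}$ is globally $k$-colorable. Since $\cls{K}$ is a natural class that is not locally on-line $k$-colorable, the first item of Theorem~\ref{thm-Schmerl++} applies and yields $\exists d\,\forall f\,\dnrstat_d(k,f)$, which is the desired conclusion. For the second bullet, I would again work in $\rcasys$ and assume $\forall G\,\exists k\,\colstat(2,k,G)$. Each $G \in \cls{K}$ is locally $2$-colorable, so the assumption provides a $k$ for which $\colstat(2,k,G)$ forces $G$ to be globally $k$-colorable; hence every graph in $\cls{K}$ is finitely colorable. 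As $\cls{K}$ is also not locally on-line $k$-colorable for any $k$, the second item of Theorem~\ref{thm-Schmerl++} applies and gives $\forall f\,\exists d\,\exists k\,\dnrstat_d(k,f)$, as required.

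I do not expect a serious obstacle: both bullets are immediate instantiations of Theorem~\ref{thm-Schmerl++}, and the only point meriting a word of care is the identification of the hypothesis ``$\colstat(2,k,G)$ for all $G$'' (respectively its $\exists k$ form) with ``every graph of the natural class $\cls{K}$ is $k$-colorable'' (respectively ``finitely colorable''), which follows at once from the fact that $\cls{K}$ consists exactly of the locally $2$-colorable graphs, together with the previously established failure of local on-line $k$-colorability for $\cls{K}$.
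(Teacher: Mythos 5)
Your proposal is correct and is exactly the argument the paper intends: instantiate Theorem~\ref{thm-Schmerl++} with $\cls{K}$ the natural class of locally $2$-colorable graphs, using the preceding observation that this class is not locally on-line $k$-colorable for any $k$ (via forests) and the identification of $\forall G\,\colstat(2,k,G)$ with ``every graph in $\cls{K}$ is $k$-colorable.'' No differences worth noting.
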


When working over $\rcasys + \ist$, both $\exists k \forall f \dnrstat(k,f)$ and $\forall f \exists k \dnrstat_d(k,f)$ are equivalent to $\wklstat$ by Theorem~\ref{thm-DNR2inISigma2}. A similar argument shows that $\exists d \exists k \forall f \dnrstat_d(k,f)$ and $\forall f \exists d \exists k \dnrstat_d(k,f)$ are likewise equivalent to $\wklstat$ over $\rcasys + \ist$. It follows from Corollary~\ref{cor-COLtoDNRD} that Conjecture~\ref{conj-gh} is true with $\rcasys + \ist$ in place of $\rcasys$.

\begin{Corollary}
$\rcasys + \ist \vdash  (\forall \ell \geq 2)(\forall k \geq \ell)(\wklstat \biimp \forall G\,\colstat(\ell,k,G))$.
\end{Corollary}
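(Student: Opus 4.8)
The plan is to prove the biconditional one direction at a time, arguing uniformly for all $\ell \geq 2$ and $k \geq \ell$, since the theory must prove the statement with $\ell$ and $k$ as (possibly nonstandard) variables. The implication $\wklstat \imp \forall G\,\colstat(\ell,k,G)$ is immediate from Theorem~\ref{thm-h}: over $\rcasys$, $\wklstat$ yields $\forall G\,\colstat(\ell,\ell,G)$, and since $k \geq \ell$ every global $\ell$-coloring is a global $k$-coloring, so $\colstat(\ell,\ell,G) \imp \colstat(\ell,k,G)$ for every $G$. This direction uses no induction beyond what $\rcasys$ already provides.

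For the converse I would chain the results of the previous section. First, still in $\rcasys$: $\forall G\,\colstat(\ell,k,G) \imp \forall G\,\colstat(2,k,G)$, because if $G$ is locally $2$-colorable then, as $2 \leq \ell$, every $2$-coloring of a finite induced subgraph is an $\ell$-coloring, so $G$ is locally $\ell$-colorable, and then $\colstat(\ell,k,G)$ makes $G$ globally $k$-colorable. Next, the first bullet of Corollary~\ref{cor-COLtoDNRD} turns $\forall G\,\colstat(2,k,G)$ into $\exists d\,\forall f\,\dnrstat_d(k,f)$, and existentially generalizing over the parameter $k$ gives $\exists d\,\exists k\,\forall f\,\dnrstat_d(k,f)$. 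Finally, over $\rcasys + \ist$ this last principle implies $\wklstat$, as recorded in the discussion preceding the corollary; this is the one step that genuinely uses $\Sigma^0_2$ induction. Composing these implications inside $\rcasys + \ist$ yields $\forall G\,\colstat(\ell,k,G) \imp \wklstat$, completing the proof.

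The main obstacle is therefore that last step: establishing in $\rcasys + \ist$ that $\exists d\,\exists k\,\forall f\,\dnrstat_d(k,f) \imp \forall f\,\dnrstat(2,f)$ (equivalently $\wklstat$). I would obtain it by adapting the proof of Theorem~\ref{thm-DNR2inISigma2}. Given $f$ and a sequence $\vec g$ that is $\dnrstat_d(k,f)$ for some (possibly nonstandard) $d$ and $k$, first enlarge $k$ to a power of $2$ and apply Lemma~\ref{lem-packing} to replace $\vec g$ by a width-$2$ sequence that is $\dnrstat_e(2,f)$ for a suitable $e$; then run Friedberg's collapse, defining the appropriate partial $f$-computable target functions and partial $(f\oplus\vec g)$-computable response functions and using the $\Pi^0_2$ least-element principle --- a consequence of $\rcasys + \ist$ exactly as in Theorem~\ref{thm-DNR2inISigma2} --- to locate the least coordinate at which a genuine diagonalization can be extracted and peel it off, producing a $\dnrstat(2,f)$ function computable from $f \oplus \vec g$. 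The appeal to $\ist$ here is not an artifact of the argument: by Theorems~\ref{thm-noDNRK} and~\ref{thm-noWKL}, the analogous collapses for $\exists k\,\forall f\,\dnrstat(k,f)$ and $\forall f\,\exists k\,\dnrstat(k,f)$ already fail over $\rcasys + \bst$.
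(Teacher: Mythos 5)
Your proposal is correct and follows essentially the same route as the paper, which derives the corollary by combining Theorem~\ref{thm-h} (for the forward direction and the reduction to $\ell=2$) with Corollary~\ref{cor-COLtoDNRD} and the remark that $\exists d\,\exists k\,\forall f\,\dnrstat_d(k,f)$ is equivalent to $\wklstat$ over $\rcasys+\ist$ by an argument "similar" to Theorem~\ref{thm-DNR2inISigma2}. Your sketch of that last step (pad $k$ to a power of $2$, apply Lemma~\ref{lem-packing} to get a depth-$e$ width-$2$ sequence, then run the Friedberg collapse via the $\Pi^0_2$ least-element principle) is exactly the intended filling-in of the detail the paper leaves implicit.
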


The relationships among diagonally non-recursive functions, depth $d$ diagonally non-recursive sequences, and graph colorings need further clarification.

\begin{Question}\label{q-DNRDvsCOL}{\ }
\begin{itemize}
\item Are $\exists k \exists d \forall f \dnrstat_d(k,f)$ and $\exists k\forall G\,\colstat(2,k,G)$ equivalent over $\rcasys$ (or over $\rcasys + \bst$)?
\item Are $\forall f \exists k \exists d \dnrstat_d(k,f)$ and $\forall G\exists k\colstat(2,k,G)$ equivalent over $\rcasys$ (or over $\rcasys + \bst$)?
\end{itemize}
\end{Question}

While $\dnrstat(k^d,f)$ implies $\dnrstat_d(k,f)$ over $\rcasys$ by Lemma~\ref{lem-packing}, it is not known whether the reverse implication holds.

\begin{Question}\label{q-DNRvsDNRD}{\ }
\begin{itemize}
\item Are $\exists k \forall f \dnrstat(k,f)$ and $\exists d \exists k \forall f \dnrstat_d(k,f)$ equivalent over $\rcasys$ (or over $\rcasys + \bst$)?
\item Are $\forall f \exists k \dnrstat(k,f)$ and $\forall f \exists d \exists k \dnrstat_d(k,f)$ equivalent over $\rcasys$ (or over $\rcasys + \bst$)?
\end{itemize}
\end{Question}

Similar to the diagonally non-recursive case, it is possible that $\forall \ell \forall G \exists k \colstat(\ell,k,G)$ is strictly weaker than $\forall \ell \exists k \forall G\, \colstat(\ell,k,G)$ over $\rcasys + \bst$.  However, our techniques do not readily adapt to avoiding graph colorings because the construction of an eventually $\dnrstat(b,h)$ function given only an upper bound on the index of a $\dnrstat(b,h)$ function in Lemma~\ref{lem-DNRblock} relies heavily on the homogeneity of diagonally non-recursive functions.  If $f_0$ and $f_1$ are diagonally non-recursive functions, then another diagonally non-recursive function $g$ can be obtained by choosing $g(n) \in \{f_0(n), f_1(n)\}$ for each $n$.  However, if $f_0$ and $f_1$ are graph colorings, there is no reason to expect that a $g$ chosen the same way is also a graph coloring.

\begin{Question}
Does $\forall \ell \forall G \exists k \colstat(\ell,k,G)$ imply $\forall \ell \exists k \forall G\, \colstat(\ell,k,G)$ over $\rcasys$ (or over $\rcasys+\bst$)?
\end{Question}

Motivated by Question~\ref{q-DNRDvsCOL}, we conclude by exploring further relationships between diagonally non-recursive functions and graph colorings.  First, we observe that the existence of $k$-bounded diagonally non-recursive functions does not suffice to ensure that locally $k$-colorable graphs are $(2k-1)$-colorable.

\begin{Proposition}
$\rcasys + \bst \nvdash \forall k(\forall f \dnrstat(k,f) \imp \forall G\, \colstat(k,2k-1,G))$.
\end{Proposition}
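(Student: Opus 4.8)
The plan is to derive the proposition from Theorem~\ref{thm-noWKL} together with the reversal half of the Gasarch--Hirst theorem (Theorem~\ref{thm-gh}), the key point being that Theorem~\ref{thm-gh} is a single $\rcasys$-theorem of the form $(\forall\ell\geq 2)(\wklstat\biimp\forall G\,\colstat(\ell,2\ell-1,G))$, so it applies to a \emph{nonstandard} $\ell$ exactly as to a standard one. (This is precisely what fails for Schmerl's sharper Theorem~\ref{thm-Schmerl}, which is stated only for fixed standard $\ell$.)

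First I would record two elementary $\rcasys$-facts: (i) $\rcasys\vdash\forall f\,\neg\dnrstat(0,f)$ and $\rcasys\vdash\forall f\,\neg\dnrstat(1,f)$, since a $0$-bounded function cannot be total and a $1$-bounded function is constantly $0$ while there is an index $e$ with $\Phi^f_e(e)=0$ for every $f$; hence $\rcasys\vdash\forall f\dnrstat(k,f)\imp k\geq 2$; and (ii) $\rcasys\vdash(\forall k\geq 2)(\forall G\,\colstat(k,2k-1,G)\imp\wklstat)$, by instantiating Theorem~\ref{thm-gh} at $\ell=k$. Combining (i) and (ii): reasoning in $\rcasys$, if $\forall k(\forall f\dnrstat(k,f)\imp\forall G\,\colstat(k,2k-1,G))$ holds and some $k$ satisfies $\forall f\dnrstat(k,f)$, then that $k$ is $\geq 2$, so $\forall G\,\colstat(k,2k-1,G)$ holds, so $\wklstat$ holds. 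Thus $\rcasys$ proves $\forall k(\forall f\dnrstat(k,f)\imp\forall G\,\colstat(k,2k-1,G))\imp(\exists k\,\forall f\dnrstat(k,f)\imp\wklstat)$. Consequently, were $\rcasys+\bst$ to prove the antecedent, $\rcasys+\bst+\exists k\,\forall f\dnrstat(k,f)$ would prove $\wklstat$, contradicting Theorem~\ref{thm-noWKL}. In model-theoretic terms, the structure $M=(\Nb,\MP S)$ built in the proof of Theorem~\ref{thm-noWKL} satisfies $\rcasys+\bst+\neg\wklstat$, and $M\models\forall f\dnrstat(k,f)$ for the fixed value $k=K(2,k_0)$, which is nonstandard and in particular $\geq 2$; instantiating Theorem~\ref{thm-gh} in $M$ at $\ell=k$ gives $M\models\neg\forall G\,\colstat(k,2k-1,G)$, so $M$ witnesses the claimed non-derivability.

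The only delicate step, and essentially the only place the argument could fail, is the appeal to Theorem~\ref{thm-gh} at a nonstandard $\ell$; this is licensed precisely because Gasarch and Hirst establish a uniform $(\forall\ell\geq 2)$-statement over $\rcasys$. It is tempting to try instead to construct the offending graph directly with the online-coloring apparatus of the previous section: the class of locally $k$-colorable graphs contains the forests, so Lemma~\ref{lem-Schmerl} would manufacture a locally $k$-colorable graph that is not $(2k-1)$-colorable out of a failure of $\dnrstat_d(2k-1,f)$, where $d$ is the number of rounds needed to force $2k-1$ colors on-line (roughly $2^{2k-1}$). But that route is a dead end here, because $\forall f\dnrstat(k,f)$ implies $\forall f\dnrstat((2k-1)^d,f)$ and hence, by Lemma~\ref{lem-packing}, $\forall f\dnrstat_d(2k-1,f)$, so the hypothesis of Lemma~\ref{lem-Schmerl} is never met. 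The upshot is to avoid the game-theoretic apparatus and use the clean uniform statement of Theorem~\ref{thm-gh}, which makes the proposition essentially a reformulation of Theorem~\ref{thm-noWKL}.
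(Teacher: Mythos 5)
Your proof is correct and follows essentially the same route as the paper's: assume derivability, combine with $\exists k\,\forall f\,\dnrstat(k,f)$ and the reversal direction of Theorem~\ref{thm-gh} to get $\wklstat$, and contradict Theorem~\ref{thm-noWKL}. Your extra care about $k\geq 2$ and about Theorem~\ref{thm-gh} being a uniform $(\forall\ell\geq 2)$-statement (applicable at nonstandard $\ell$) is a worthwhile clarification of a point the paper's terse proof leaves implicit.
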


\begin{proof}
If $\rcasys + \bst \vdash \forall k(\forall f \dnrstat(k,f) \imp \forall G\, \colstat(k,2k-1,G))$, then also $\rcasys + \bst + \exists k \forall f \dnrstat(k,f) \vdash \exists k \forall G\, \colstat (k, 2k-1, G)$.  It would then follow from Theorem~\ref{thm-gh} that $\rcasys + \bst + \exists k \forall f \dnrstat(k,f) \vdash \wklstat$, contradicting Theorem~\ref{thm-noWKL}.  So $\rcasys + \bst \nvdash \forall k(\forall f \dnrstat(k,f) \imp \forall G\, \colstat(k,2k-1,G))$.
\end{proof}

A $(2k-1)$-coloring of a graph $G$ is also a $2k$-coloring of $G$, so asserting that every locally $k$-colorable graph $G$ is $2k$-colorable is potentially weaker than asserting that it is $(2k-1)$-colorable.  This situation raises the following question.

\begin{Question}\label{q-kp}
Does $\rcasys$ (or $\rcasys + \bst$) prove $\forall k(\forall f \dnrstat(k,f) \imp \forall G\, \colstat(k,2k,G))$?
\end{Question}

Note that because $\forall f \dnrstat(k,f)$ implies $\wklstat$ over $\rcasys$ for any fixed $k \in \omega$, the answer to the question is yes when restricted to a fixed $k \in \omega$.  Although we have not answered this question in general, we can formulate an analog of Theorem~\ref{thm-h} with $\exists k \forall f \dnrstat(k,f)$ replacing $\wklstat$ by restricting the class of graphs.

\begin{Definition}{\ }
\begin{itemize}
\item A \emph{complete $k$-partite graph} is a graph $G = (V,E)$ where $V$ is a set of vertices of the form $V = \{v_{(i,n)} : i < k \andd n \in \Nb \}$ and $E = \{(v_{(i,n)}, v_{(j,m)}) : i,j < k \andd n,m \in \Nb \andd i \neq j\}$.  

\item An \emph{ornamented complete $k$-partite graph} is a graph $G = (V \cup W, E)$, where $(V, E \cap (V \times V))$ is a complete $k$-partite graph and every $w \in W$ is either isolated or adjacent to exactly one $v \in V$.
\end{itemize}
\end{Definition}

\begin{Proposition}
\begin{align*}
\rcasys \vdash (\forall k \geq 2)(\forall f \dnrstat(k,f) \biimp \text{every ornamented complete $k$-partite graph is $k$-colorable}).
\end{align*}
\end{Proposition}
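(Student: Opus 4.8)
The plan is to prove the two implications separately; in both of them the bridge is the classical relationship between diagonally non-recursive functions and graph colourings, and the heart of the matter is that ornamented complete $k$-partite graphs are already a rich enough subclass of the locally $k$-colourable graphs to capture $\dnrstat(k,\cdot)$ exactly.

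For the forward implication, assume $\forall f\,\dnrstat(k,f)$ and let $G=(V\cup W,E)$ be an ornamented complete $k$-partite graph. Apply the hypothesis with $f=G$ to obtain a $\dnrstat(k,G)$ function $g$, and $k$-colour $G$ by recursion on the vertices $v_0,v_1,\dots$, maintaining a partial proper colouring that never reaches a dead end. The colourings of such a graph are very constrained — the colour classes of distinct non-empty parts are disjoint, so (all $k$ parts being present) each part is monochromatic with its own colour, and an ornament need only avoid the single colour of its parent — so when $v_n$ is presented, choosing its colour reduces to choosing a value in $\{0,\dots,k-1\}$ that avoids a single ``fatal'' colour determined by $G$ and the colouring built so far. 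Fixing a $G$-computable index $e_n$ with $\Phi_{e_n}^{G}(e_n)$ equal to that fatal colour, I would take $\chi(v_n)$ to be $g(e_n)$, after a harmless adjustment that keeps it outside the forbidden set of $v_n$ (which is never all of $\{0,\dots,k-1\}$). A $\Pi^0_1$-induction showing the partial colourings always extend, together with bounded $\Sigma^0_1$ comprehension to assemble the result, then shows $G$ is $k$-colourable.

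For the reverse implication, fix $f$ and construct, uniformly and $f$-computably, an ornamented complete $k$-partite graph $G_f$ whose $k$-colourings decode to $\dnrstat(k,f)$ functions. Let $G_f$ have $k$ disjoint infinite independent ``parts'' $P_0,\dots,P_{k-1}$ with all edges present between distinct parts, distinguish one anchor $a_i\in P_i$ for each $i$, and add, for each $e$, ornament vertices $w_{e,0},w_{e,1},\dots$ whose attachments are declared stage by stage: at stage $s$, run $\Phi_e^f(e)$ for $s$ steps, attach $w_{e,s}$ to $a_c$ if it has halted with value $c<k$, and attach $w_{e,s}$ to $a_0$ otherwise. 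This keeps $E$ genuinely $\Delta^0_1(f)$, and $G_f$ is patently an ornamented complete $k$-partite graph, hence $k$-colourable by the hypothesis. Given a $k$-colouring $\chi$, the map $\pi\colon j\mapsto\chi(a_j)$ is a permutation of $\{0,\dots,k-1\}$, and I would read off $g(e)=\pi^{-1}(\chi(w_{e,s}))$ at a suitable stage $s$: each $w_{e,s}$ is attached to some anchor, so $\pi^{-1}(\chi(w_{e,s}))$ always misses that anchor's index, and from the halting stage on, if $\Phi_e^f(e)$ halts with value $c<k$, that index is $c$; hence $g(e)\ne\Phi_e^f(e)$ (the cases $\Phi_e^f(e)\ua$ and $\Phi_e^f(e)\da\ge k$ imposing no real constraint). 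One then argues $g$ is a total function computable from $\chi\oplus f$, so $g$ lies in the model and witnesses $\dnrstat(k,f)$.

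The main obstacle lies in this reverse construction: one has to keep $G_f$ simultaneously $\Delta^0_1(f)$ and an honest ornamented complete $k$-partite graph while still guaranteeing that \emph{every} $k$-colouring pins down a \emph{total} diagonally non-recursive function — in particular the decoding of $g(e)$ must succeed on the inputs $e$ for which $\Phi_e^f(e)$ diverges and no halting stage is ever exhibited, which is precisely where, as in the classical comparisons of graph colouring with $\wklstat$, the colouring must be made to carry the genuine non-computable content; getting the bookkeeping of the stagewise ornament attachments to do this is where the real work is. The forward direction needs the complementary care that the partial colourings never dead-end and that the fatal colour of the next vertex is uniformly $G$-computable from the colouring so far.
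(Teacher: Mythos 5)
Your forward direction is essentially the paper's argument, though you make it look harder than it is: since the presentation $V=\{v_{(i,n)} : i<k \andd n \in \Nb\}$ is part of the data of an ornamented complete $k$-partite graph, one simply sets $\chi(v_{(i,n)})=i$ and colors each ornament $w$ by $g(h(w))$, where $h(w)$ is an index for the partial $G$-recursive search for the part index of $w$'s unique neighbor. Do note one point your phrasing obscures: the ``fatal colour'' must be computable from $G$ alone, not from ``$G$ and the colouring built so far,'' since $\Phi_{e_n}^{G}$ has no access to $g$; this is fine here only because the colors of the partite part are forced, so the fatal color of an ornament is just its parent's part index.

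The backward direction has a genuine gap, and it is exactly the one you flag as ``where the real work is.'' With one ornament $w_{e,s}$ per stage and the default attachment to $a_0$ when $\Phi_{e}^{f}(e)$ has not yet halted, the coloring constrains $\chi(w_{e,s})$ to avoid $\pi(c)$ only for $s$ at or beyond the halting stage; for earlier $s$ it merely avoids $\pi(0)$. An adversarial coloring can therefore set $\chi(w_{e,s})=\pi(c)$ for every pre-halting stage $s$ (when $\Phi_e^f(e)\da=c\neq 0$), so any decoding $g(e)=\pi^{-1}(\chi(w_{e,s}))$ with $s$ chosen computably from $\chi\oplus f$ risks outputting exactly $c$; choosing $s$ to be the halting stage is not $f$-recursive, and there is no $\chi$-computable signal telling you which stage is ``suitable.'' So the construction as described does not yield a $\dnrstat(k,f)$ function, and no decoding can repair it. The fix is to change the graph, not the decoding: use a \emph{single} ornament $w_e$ per index $e$ and put $(w_e,v_{(i,s)})\in E$ iff $\Phi_{e,s}^{f}(e)=i$ and $(\forall t<s)(\Phi_{e,t}^{f}(e)\ua)$. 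This edge relation is $\Delta^0_1(f)$, the graph is still an honest ornamented complete $k$-partite graph ($w_e$ is isolated when $\Phi_e^f(e)\ua$ or halts with value $\geq k$), and since every vertex of part $i$ receives color $i$ after permuting, $g(e)=\chi(w_e)$ is total, $k$-bounded, computable from $\chi$, and diagonally non-recursive relative to $f$ with no stage search at all.
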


\begin{proof}
Fix $k \in \Nb$.

For the forward direction, let $G = (V \cup W, E)$ be an ornamented complete $k$-partite graph, where $V = \{v_{(i,n)} : i < k \andd n \in \Nb \}$ and $W = \{w_n : n \in \Nb\}$.  Define a function $h \colon \Nb \imp \Nb$ so that, for all $n, x \in \Nb$, $\Phi_{h(n)}^G(x) = i$ if there is an $m \in \Nb$ such that $w_n$ is adjacent to $v_{(i,m)}$ (and $\Phi_{h(n)}^G(x)\ua$ otherwise).  Let $g$ be $\dnrstat(k,G)$.  Define $\chi \colon V \imp k$ by $\chi(v_{(i,n)}) = i$ and $\chi(w_n) = g(h(n))$.  It is easy to verify that $\chi$ is a $k$-coloring of $G$.

For the backward direction, let $G_0 = (V,E_0)$ be a complete $k$-partite graph, and, given $f$, extend $G_0$ to the ornamented complete $k$-partite graph $G = (V \cup W, E)$, where $W = \{w_n : n \in \Nb\}$, by defining $(w_n, v_{(i,s)}) \in E$ if and only if $\Phi^f_{n,s}(n) = i$ and $(\forall t < s)(\Phi^f_{n,t}(n)\ua)$.  Let $\chi$ be a $k$-coloring of $G$, and permute the colors so that $\chi(v_{(i,0)}) = i$ for each $i < k$.  Then the function $g$ defined by $g(n) = \chi(w_n)$ is $\dnrstat(k,f)$.
\end{proof}

Say that a graph $G_0 = (V_0,E_0)$ embeds into a graph $G_1 = (V_1,E_1)$ if there is an injection $h \colon V_0 \imp V_1$ such that $(\forall v, w \in V_0)((v,w) \in E_0 \imp (h(v),h(w)) \in E_1)$.  Notice that a graph is $k$-colorable if and only if it embeds into a complete $k$-partite graph.  In fact, it is not hard to see that $\rcasys$ proves this fact.  We can rephrase Question~\ref{q-kp} in terms of embeddings as follows.

\begin{Question}
Does $\rcasys$ (or $\rcasys + \bst$) prove the following statement?
\begin{align*}
\forall k(\forall f \dnrstat(k,f) \imp &\text{ every locally $k$-colorable graph}\\
&\text{ can be embedded into an ornamented complete $2k$-partite graph})
\end{align*}
\end{Question}

\section*{Acknowledgement}

We thank our anonymous referee for his or her very helpful report.

\bibliographystyle{amsplain}
\bibliography{existskDNRkvsWKL}

\end{document}